\documentclass[reqno]{amsart}
\usepackage{amsmath,amssymb,mathrsfs,amsthm,amsfonts}
\usepackage[inline]{enumitem} 
\usepackage[usenames,dvipsnames]{xcolor}
\usepackage{hyperref}
\hypersetup{%
	colorlinks=true, linkcolor=ForestGreen,
	citecolor=ForestGreen
}
\usepackage[paper=letterpaper,margin=1in]{geometry}

\newtheorem{theorem}{Theorem}[section]

\newenvironment{customthm}[1]
{\innercustomthm}
{\endinnercustomthm}
\newtheorem{lemma}[theorem]{Lemma}
\newtheorem{proposition}[theorem]{Proposition}

\theoremstyle{definition}

\newtheorem{remark}[theorem]{Remark}
\numberwithin{equation}{section}
\allowdisplaybreaks
\usepackage{acronym}

\acrodef{KPZ}{Kardar--Parisi--Zhang}
\acrodef{SHE}{Stochastic Heat Equation}
\acrodef{LDP}{Large Deviation Principle}

\renewcommand{\Pr}{\mathbf{P}}	
\newcommand{\Ex}{\mathbf{E}}	
\renewcommand{\d}{\mathrm{d}}	
\newcommand{\ind}{\mathbf{1}}	
\newcommand{\set}[1]{{\{#1\}}}	
\newcommand{\sgn}{\mathrm{sign}}
\newcommand{\tr}{\mathrm{tr}} 	
\newcommand{\normm}[1]{|#1|}

\newcommand{\norm}[1]{\Vert#1\Vert}
\newcommand{\Norm}[1]{\big\Vert#1\big\Vert}

\newcommand{\hk}{p}					
\newcommand{\ai}{\operatorname{Ai}}	
\newcommand{\aisq}{\Phi}			
\newcommand{\expf}{U}				

\newcommand{\Con}{\mathrm{C}} 

\newcommand{\R}{\mathbb{R}} 
\newcommand{\Z}{\mathbb{Z}} 
\newcommand{\Msp}{\mathfrak{M}} 


\newcommand{\e}{\varepsilon}
\newcommand{\op}{\mathrm{op}}
\newcommand{\s}{\sigma}
\newcommand{\calA}{\mathcal{A}}
\newcommand{\calB}{\mathcal{B}}
\newcommand{\calH}{\mathcal{H}}

\newcommand{\calI}{\mathcal{I}}
\newcommand{\calJ}{\mathcal{J}}
\newcommand{\calZ}{\mathcal{Z}}

\renewcommand{\hat}{\widehat}
\newcommand{\til}{\widetilde}
\renewcommand{\bar}{\overline}

\usepackage{graphicx}
\newcommand*{\Cdot}{{\raisebox{-0.5ex}{\scalebox{1.8}{$\cdot$}}}} 

\newcommand{\revi}{\textcolor{black}}


\title[Fractional Moments of the SHE]{Fractional Moments of the Stochastic Heat Equation}

\author[S.\ Das]{Sayan Das}
\address{S.\ Das,
	Departments of Mathematics, Columbia University,
	\newline\hphantom{\quad \ \ S. Das}
	2990 Broadway, New York, NY 10027 USA
	}
\email{sayan.das@columbia.edu}
\author[L.-C.\ Tsai]{Li-Cheng Tsai}
\address{L.-C.\ Tsai,
	Department of Mathematics, Rutgers University --- New Brunswick
	\newline\hphantom{\quad \ \ L.-C. Tsai}
	10 Frelinghuysen Road, Piscataway, NJ 08854 USA
}
\email{lctsai.math@gmail.com}
\subjclass[2010]{%
	Primary 60F10,		
	Secondary 60H15.  	
}
\keywords{%
	Kardar--Parisi--Zhang equation, stochastic heat equation, large deviations, Fredholm determinants.
}
\begin{document}
\begin{abstract}
Consider the solution $\mathcal{Z}(t,x)$ of the one-dimensional stochastic heat equation, with a multiplicative spacetime white noise, 
and with the delta initial data $ Z(0,x) = \delta(x) $.
For any real $ p>0 $, we obtained detailed estimates of the $ p $-th moment of $ e^{t/12}\mathcal{Z}(2t,0) $, as $t\to\infty$,
and from these estimates establish the one-point upper-tail large deviation principle of the Kardar--Parisi--Zhang equation.
The deviations have speed $ t $ and rate function $ \Phi_+(y)=\frac43y^{3/2} $.
Our result confirms the existing physics predictions \cite{ledoussal16long} and also \cite{kamenev16}.
\end{abstract}

\maketitle

\section{Introduction}
In this article we study the \ac{SHE} in one spatial dimension
\begin{align}\label{she}
	\hspace{.2\linewidth}
	\partial_t\mathcal{Z} = \tfrac12\partial_{xx}\mathcal{Z} + \xi\mathcal{Z}, 
	\qquad \calZ=\calZ(t,x), \quad (t,x)\in [0,\infty)\times\R,
\end{align}
where $ \xi=\xi(t,x) $ is the Gaussian spacetime white noise.
Via the Feynman--Kac formula, solutions of the \ac{SHE} gives the partition function of the directed polymer in a continuum random environment \cite{huse1985huse,comets2005directed}.
On the other hand, the logarithm $ \mathcal{H}(t,x) := \log \mathcal{Z}(t,x) $ formally solves the \ac{KPZ} equation
\begin{align}\label{kpz}
	\partial_t\mathcal{H}=\tfrac12\partial_{xx}\mathcal{H} + \tfrac12(\partial_x \mathcal{H})^2 + \xi.
\end{align} 
Introduced in~\cite{kardar1986dynamic}, the \ac{KPZ} equation is 
\revi{a paradigm} for random surface growth.
It connects to many physical systems including \revi{directed polymers}, last passage percolation, random fluids, interacting particle systems, 
and exhibits statistical behaviors similar to certain random matrices.
We refer to \cite{ferrari2010random,quastel2011introduction,corwin2012kardar,quastel2015one,chandra2017stochastic,corwin2019} and the references therein for the mathematical study of and related to the \ac{KPZ} equation.

Throughout this paper we will consider the solution $ \calZ(t,x) $ of the \ac{SHE}~\eqref{she} with the initial data
\begin{align}
	\label{deltaic}
	\calZ(0,x) = \delta(x),
\end{align}
the Dirac delta function at the origin.
The \ac{SHE}~\eqref{she} enjoys a well-developed solution theory based on It\^{o} integral and chaos expansion~\cite{walsh1986,bertini1995stochastic}, also \cite{quastel2011introduction,corwin2012kardar}.
In particular, there exists a unique $ C((0,\infty),\R)$-valued process $ \calZ $ that solves \eqref{she} with the delta initial data~\eqref{deltaic} in the mild sense, i.e.,
\begin{align*}
	\calZ(t,x) = \hk(t,x) + \int_0^t\int_{\R} \hk(t-s,x-y) \calZ(s,y) \xi(s,y) \, \d s \d y,
\end{align*}
where $ \hk(t,x) := (2\pi t)^{-1/2} \exp(-x^2/(2t)) $ denotes the standard heat kernel.

The solution $ \calZ $ of the \ac{SHE} can be transformed into a solution of the \ac{KPZ} equation.
\revi{%
For a nonzero initial data $ \calZ(0,\Cdot) $ that is bounded, nonnegative, and has a compact support, \cite{mueller1991support} showed that almost surely $ \calZ(t,x) > 0 $ for all $ (t,x)\in(0,\infty)\times\R $. For the delta initial data \eqref{deltaic} considered here, the same positivity result was established in \cite{flores14}.%
}
The logarithm $ \calH(t,x) := \log\calZ(t,x) $ is defined to be \textbf{Hopf--Cole solution} of the \ac{KPZ} equation. 
This is the notion of solutions that we will be working with throughout this paper.
The motivation is, as mentioned previously, that non-rigorously taking logarithm in~\eqref{she} yields the \ac{KPZ} equation~\eqref{kpz}. 
The \ac{KPZ} equation~\eqref{kpz} itself is ill-posed due to the roughness of the solution and the presence of the quadratic term.
New theories have been developed for making sense of  \revi{the} \ac{KPZ} equation and constructing the corresponding solution process.
This includes \revi{regularity structures} \cite{hairer2013solving,hairer14}, paracontrolled distributions \cite{gubinelli2015paracontrolled,gubinelli17}, and \revi{energy solutions}~\cite{gonccalves2014nonlinear,gubinelli2018}.
The Hopf--Cole formulation bypasses the ill-posedness issue, and arises in several discrete or regularized version of the \ac{KPZ} equation, e.g., \cite{bertini1995stochastic, bertini1997stochastic}. Further, other notions of solutions from the aforementioned theories have been shown to coincide with the Hopf--Cole solution within the class of initial datas the theory applies.

Of interest is the large time behaviors of $ \calH(t,x) := \log\calZ(t,x) $.
Simultaneously and independently, the physics works \cite{calabrese10,dotsenko10,sasamoto10} and mathematics work \cite{amir10}
gave the following large $ t $ asymptotic fluctuation result of $ \calH(t,x) $,
and \cite{amir10} provided a rigorous proof:
\begin{align*}
	\tfrac{1}{t^{1/3}} \big( \calH(2t,0)+\tfrac{t}{12} \big)
	\Longrightarrow
	\text{GUE Tracy--Widom distribution}.
\end{align*}
This result asserts that, for large $ t $, the height $ \calH(2t,0) $ concentrates around $ -\frac{t}{12} $, has typical deviations of order $ t^{1/3} $, and after being scaled by $ t^{-1/3} $ the fluctuations converge to the GUE Tracy--Widom distribution \cite{tracy94}.

A natural question that follows the fluctuation result is establishing a \ac{LDP}, namely questions about \emph{tails} of the distribution of $ \calH(2t,0)+\tfrac{t}{12} $. We seek to find the probability of the rare events when the height $\calH(2t,0)+\tfrac{t}{12}$ has a deviation of order $t$. Interestingly the lower- and upper-tail \acp{LDP} have different speeds. The lower-tail deviations occurs at speed $t^2$ while the upper-tail deviations occurs at speed $t$. 
\begin{align}
\label{e.low}
& \Pr[\calH(2t,0)+\tfrac{t}{12}<ty] \approx e^{-t^2\Phi_-(y)}, & (y<0)
\\
\label{e.up}
& \Pr[\calH(2t,0)+\tfrac{t}{12}>ty] \approx e^{-t\Phi_+(y)}. & (y>0)
\end{align}
\noindent Such distinct speeds \revi{can be heuristically explained by} directed polymers. For a \emph{discrete} polymer on an $N\times N$ grid with i.i.d.~site weights, 
 we consider the point to point partition function. 
 It can be made anomalously large by increasing the weights along any \emph{single} path. The cost of changing the weights of $N$ such sites amounts to $ \revi{\exp}(-\mathcal{O}(N)) $. However, smaller partition function can be realized only when the weights along \emph{most} of the paths are decreased jointly. This \revi{can} occur with probability $ \exp(-\mathcal{O}(N^2)) $ \revi{by decreasing} the weights of most of the sites, \revi{c.f., Remark~\ref{r.speed}}.
 For the KPZ equation, recall that the Feynman--Kac formula identifies solution of the SHE as the partition function of the directed polymer in a continuum random environment. 
 This is analogous to discrete polymers, with Brownian motion replacing random walks and space-time white noise replacing site weights.
 In the continuum setting $t$ plays the analogous role as $N$, since both $t$ and $N$ parametrize the polymer length.  
 Identifying $t$ with $N$, we should expect the $t^2$ vs $t$ speeds in \eqref{e.low} and \eqref{e.up}. These speeds were predicted in the physics work  \cite{ledoussal16long}, where the prescribed polymer argument was given. %

\begin{remark}\label{r.speed}
\revi{%
The speed of lower-tail deviations is in fact not universal when the random environment is unbounded.
Specifically, \cite{benari09} showed that the lower-tail speed of the directed polymer with a Gaussian environment is $ N^2/\log N $ instead of $ N^2 $.%
}
\end{remark}
 
Recently there has been much development around the large deviations of the \ac{KPZ} equation in the mathematics and physics communities.
Employing the optimal fluctuation theory, the physics works \cite{kolokolov07,kolokolov09,meerson16} predicted various tail behaviors of the \ac{KPZ} equation.
These predictions were further supported by the analysis of exact formulae in the physics works~\cite{ledoussal16short,krajenbrink17short,krajenbrink18simple}.
In mathematics terms, the optimal fluctuation theory corresponds to Fredilin--Wentzell type large deviations of stochastic PDEs with a small noise.
There has been rigorous treatment \cite{hairer15,cerrai19} of such large deviations for certain nonlinear stochastic PDEs.

Under the same initial data as this paper, the physics works \cite{sasorov2017large,corwin2018coulomb,krajenbrink18} each employed a different method to derive the same explicit rate function for the lower-tail deviations of $ \calH(2t,0)+\tfrac{t}{12} $. The work \cite{corwin2018lower} provides detailed, rigorous bounds on tails of $ \calH(2t,0)+\tfrac{t}{12} $,
which are valid for all $ t>0 $ and capture a crossover behavior predicted in~\cite{kolokolov09,meerson16}.
The lower-tail \ac{LDP} with the exact rate function was later proven in \cite{tsai2018exact}, and more recently in \cite{cafasso2019riemann}.
The four different routes \cite{sasorov2017large,corwin2018coulomb,krajenbrink18,tsai2018exact} of deriving the lower-tail \ac{LDP} were later shown to be closely related \cite{krajenbrink19}. Two new routes have been recently obtained in the rigorous work \cite{cafasso2019riemann} and physics work \cite{doussal2019large}.

In this paper we focus on the \emph{upper} tail --- the complement of the aforementioned results.
Since $ \calZ(t,x) = \exp(\calH(t,x)) $, the upper tail is closely related to positive moments of $ \calZ $.
The moments of SHE and its connection to intermittency property \cite{gartner1990parabolic,gartner2007geometric} has been previously studied in \cite{conus2013chaotic, chen2015moments, chen2015precise, khoshnevisan2017intermittency}. 
These works
established finite time estimates of tails or moments of $ \calZ(t,x) $ and solutions of related stochastic PDEs. 
\revi{%
The work \cite{chen16} studied a class of equations that includes the \ac{SHE} with the delta initial data considered here.
With the aim of establishing the existence of the smooth density,
the work obtained finite time tail estimates of the solution.%
}

For the large time regime considered here, the form $ \Phi_+(y)=\frac43 y^{3/2} $ was predicted in \cite{ledoussal16long} by analyzing an exact formula. The analysis also yields subdominant corrections; see \cite[Supp.~Mat.]{doussal2016large}. We note that, for the short time regime, \cite{kamenev16} predicted the same $\frac32$-power law. A priori, the optimal fluctuation theory used therein works only for short time, although the validity in large time was argued therein. For the large time regime, \cite{corwin2013crossover} gave a bound on of the upper tail of $ \calZ(t,x) $ (with a different initial data). The bound exhibits the predicted $\frac32$-power for small $y$ but not large $y$. 
Extracting information from positive integer moments of $\calZ$, \cite{corwin2018kpz} provided detail bounds on the upper-tail probability. The upper and lower bounds therein capture the aforementioned $\frac32$-power law but do not match as $t\to \infty$. 

In this paper we present the \emph{first rigorous proof} of the upper-tail \ac{LDP} of $ \calH(2t,0)+\tfrac{t}{12} $ with the predicted $\Phi_+(y)=\tfrac43y^{3/2}$ rate function.  Interestingly, this matches exactly with the upper-tail rate function for the Tracy-Widom distribution \cite{tracy94}. Our main result gives both the $ t\to\infty $ asymptotic of the $ p $-th moment of $ \calZ(2t,0) $, for any real $ p>0 $,
and the upper-tail \ac{LDP} of the \ac{KPZ} equation.

\begin{theorem} \label{thm.main} 
Let $ \calZ(t,x) $ be the solution of the \ac{SHE}~\eqref{she} with the delta initial data~\eqref{deltaic},
and let $ \calH(t,x) := \log \calZ(t,x) $ be the Hopf--Cole solution of the \ac{KPZ} equation~\eqref{kpz}.
\begin{enumerate}[label=(\alph*), leftmargin=15pt]
\item \label{thm.main.mom}
For any $ p\in(0,\infty) $, we have
\begin{align}
	\label{e.thm.mom}
	\lim_{t\to\infty} \frac{1}{t} \log \Ex\Big[ e^{p(\calH(2t,0)+\frac{t}{12})} \Big]
	=	
	\lim_{t\to\infty} \frac{1}{t} \log \Ex\Big[ \big(\calZ(2t,0)e^{\frac{t}{12}}\big)^p \Big]
	=
	\frac{p^3}{12}.
\end{align}
\item \label{thm.main.ldp}
For any $ y\in(0,\infty) $, we have
\begin{align}
	\label{e.thm.ldp}
	\lim_{t\to\infty} \frac{1}{t} \log \Pr\Big[ \calH(2t,0)+\tfrac{t}{12} \geq ty \Big]
	=-\Phi_+(y):=	
	-\frac{4}{3}y^{3/2}.
\end{align}
\end{enumerate}
\end{theorem}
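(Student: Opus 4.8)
The plan is to prove part~\ref{thm.main.mom} first and then deduce part~\ref{thm.main.ldp} from it by soft arguments. Throughout set $X_t := \calZ(2t,0)e^{t/12}$, so that $\Ex[X_t^p] = \Ex[e^{p(\calH(2t,0)+\frac{t}{12})}]$; the content of part~\ref{thm.main.mom} is $\tfrac1t\log\Ex[X_t^p]\to\tfrac{p^3}{12}$ for every real $p>0$, and integer and non-integer $p$ are handled by different inputs. For $p=n\in\Z_{\geq1}$ one uses the exact ``replica'' (Bethe-ansatz / Borodin--Corwin) contour-integral representation of $\Ex[\calZ(2t,0)^n]$; a residue and steepest-descent analysis shows that, as $t\to\infty$, the dominant term is the single $n$-string (the $n$-particle bound state) contribution, yielding $\Ex[\calZ(2t,0)^n]=\exp\!\big(\tfrac{t}{12}(n^3-n)+o(t)\big)$ and therefore $\Ex[X_t^n]=\exp\!\big(\tfrac{n^3}{12}\,t+o(t)\big)$, which is \eqref{e.thm.mom} for integer $p$ (only the exponential rate is needed).

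For non-integer $p\in(n,n+1)$ I would use the elementary identity
\begin{align*}
	x^p \;=\; \frac{1}{\Gamma(-p)}\int_0^\infty s^{-p-1}\Big(e^{-sx}-\sum_{k=0}^{n}\frac{(-sx)^k}{k!}\Big)\,\d s ,\qquad x\geq 0,
\end{align*}
so that, after taking expectations (using $|e^{-u}-\sum_{k=0}^n\tfrac{(-u)^k}{k!}|\leq\tfrac{u^{n+1}}{(n+1)!}$ to justify integrability and the interchange), $\Ex[X_t^p] = \tfrac{1}{\Gamma(-p)}\int_0^\infty s^{-p-1}\big(\Ex[e^{-sX_t}]-\sum_{k=0}^{n}\tfrac{(-s)^k}{k!}\Ex[X_t^k]\big)\,\d s$. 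Into the Laplace transform $\Ex[e^{-sX_t}]$ one substitutes the Amir--Corwin--Quastel Fredholm-determinant formula for the one-point law of the narrow-wedge KPZ equation, and then runs a saddle-point analysis of the $s$-integral. Writing $s=e^{-t\beta}$ turns the integral into $t\int e^{t\beta p}\big(\text{bracket}\big)\,\d\beta$, and the key asymptotics of the Fredholm determinant show that $\tfrac1t\log$ of the bracket converges to $-\tfrac43\beta^{3/2}$ for $\beta$ in the governing range; since $\sup_{\beta>0}\big(\beta p-\tfrac43\beta^{3/2}\big)=\tfrac{p^3}{12}$, attained at $\beta=p^2/4$, and this lies far beyond the Tracy--Widom scale $\beta\asymp t^{-2/3}$, matching upper and lower bounds on the determinant along this analysis give $\Ex[X_t^p]=\exp\!\big(\tfrac{p^3}{12}\,t+o(t)\big)$, completing part~\ref{thm.main.mom}.

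To pass from moments to the LDP, the upper bound in \eqref{e.thm.ldp} is Markov's inequality, $\Pr[X_t\geq e^{ty}]\leq e^{-pty}\Ex[X_t^p]$: taking $\tfrac1t\log$, letting $t\to\infty$, and minimizing $\tfrac{p^3}{12}-py$ over $p>0$ (the minimum, $-\tfrac43y^{3/2}$, is attained at $p=2\sqrt y$) gives $\limsup_t\tfrac1t\log\Pr[X_t\geq e^{ty}]\leq-\tfrac43y^{3/2}$. For the matching lower bound, fix $\delta>0$, put $p:=2\sqrt y+\delta$ and $M:=e^{ty}$; for $\eta>0$ small the bound $\Ex[X_t^p\ind_{X_t\leq M}]\leq M^{\eta}\Ex[X_t^{p-\eta}]$ and part~\ref{thm.main.mom} force $\Ex[X_t^p\ind_{X_t\leq M}]\leq\tfrac12\Ex[X_t^p]$ for large $t$, hence $\Ex[X_t^p\ind_{X_t>M}]\geq\tfrac12\Ex[X_t^p]$; bounding the left side above by Hölder, $\Ex[X_t^p\ind_{X_t>M}]\leq\Ex[X_t^{p+\eta}]^{p/(p+\eta)}\Pr[X_t>M]^{\eta/(p+\eta)}$, and using part~\ref{thm.main.mom} once more, one gets $\liminf_t\tfrac1t\log\Pr[X_t\geq e^{ty}]\geq-\tfrac{(2\sqrt y+\delta)^3}{6}$ after $\eta\to0$; sending $\delta\to0$ completes the proof.

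The main obstacle is the non-integer moment step. Everything hinges on the asymptotics of the Amir--Corwin--Quastel Fredholm determinant --- equivalently, on sharp two-sided control of the upper tail $\Pr[\calH(2t,0)+\tfrac t{12}\geq ty]$ --- in a window exponentially deep in $t$, far outside the regime where the usual Tracy--Widom tail estimates apply, and these asymptotics must be pinned down with the exact constant $\tfrac43$. Controlling the kernel and the Fredholm determinant uniformly over this deep-tail range, and (for $p>1$) handling the near-total cancellation between the determinant and the exponentially large integer moments subtracted off in the Mellin identity, is where the real work lies.
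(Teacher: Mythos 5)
Your overall strategy --- express the Laplace transform $\Ex[e^{-sX_t}]$ via the Fredholm determinant \eqref{e.fred}, recover fractional moments of $X_t$ from it through a Mellin-type identity, and then get the LDP from the moments --- is the same as the paper's, and your LDP step is essentially correct: the Markov upper bound matches the paper, and your H\"older-peeling lower bound is a valid (and slightly different) alternative to the paper's exponential tilting, both using only part~\ref{thm.main.mom}. But there is a genuine gap in the core step, the non-integer moments.

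You choose the subtracted-Taylor Mellin identity
\begin{align*}
	\Ex[X_t^p]
	=
	\frac{1}{\Gamma(-p)}\int_0^\infty s^{-p-1}\Big(\Ex[e^{-sX_t}]-\sum_{k=0}^{n}\tfrac{(-s)^k}{k!}\Ex[X_t^k]\Big)\,\d s,
\end{align*}
and you yourself flag that for $p>1$ the ``near-total cancellation between the determinant and the exponentially large integer moments subtracted off'' is ``where the real work lies.'' That work is not a technicality; it is the entire difficulty, and nothing in the proposal resolves it. The subtracted polynomial has coefficients $\Ex[X_t^k]\sim e^{k^3t/12}$ which grow much faster than the target $e^{p^3t/12}$ whenever $n>p$ is possible (and even $k=n$ with $n=\lceil p\rceil$ gives a larger exponent than $p^3/12$), so one is extracting a comparatively tiny quantity from a difference of exponentially large ones, and the Fredholm determinant has no natural ``subtracted'' representation that exposes this cancellation. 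This is exactly why the paper instead uses Lemma~\ref{l.fractionalmom}: with $n=\lfloor p\rfloor+1$ and $\alpha=p-\lfloor p\rfloor\in[0,1)$,
\begin{align*}
	\Ex[X_t^p]
	=
	\frac{(-1)^n}{\Gamma(1-\alpha)}\int_0^\infty s^{-\alpha}\,\partial_s^n\,\Ex[e^{-sX_t}]\,\d s,
\end{align*}
whose integrand $(-1)^n\partial_s^n\Ex[e^{-sX_t}]=\Ex[X_t^n e^{-sX_t}]\geq 0$ involves \emph{no} cancellation and whose weight $s^{-\alpha}$ is integrable at $0$. Passing $\partial_s^n$ into the Fredholm series gives $\partial_s^n\tr(K_{s,t}^{\wedge L})$, and the $L=1$ piece integrates against $s^{-\alpha}$ into a Beta function (eq.~\eqref{beta}), producing a closed-form leading term $\calA_p(t)=\Gamma(p+1)t^{2/3}\int_\R e^{prt}\aisq(t^{2/3}r)\,\d r$ with two-sided asymptotics from Lemma~\ref{ap-int1}. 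Your route has no analogue of this clean reduction.

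Two further points you do not address. First, the paper must truncate the $s$-integral at $s=1$ (eq.~\eqref{e.fmoment}) because for $s\gg 1$ the Fredholm determinant oscillates as $t\to\infty$; in your $\beta$-variable this is the half-line $\beta<0$, and your saddle-point sketch only speaks of ``$\beta$ in the governing range'' without controlling that region. Second, a heuristic saddle-point analysis of $\det(I-K_{s,t})$ alone does not pin down the rate: as Section~\ref{sect.nonunique} shows, matching the leading-order Fredholm asymptotics is consistent with infinitely many candidate rate functions \eqref{e.nonunique}. Overcoming that nonuniqueness is precisely what the $\partial_s^n$ device accomplishes, and your proposal does not explain how the subtracted-Taylor form would do the same.
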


\begin{remark}
\revi{%
The results in Theorem~\ref{thm.main} immediately generalize to $ x\neq 0 $. This is so because, under the delta initial data \eqref{deltaic}, the random variables $ \calZ(2t,0) $ and $ \calZ(2t,x)\exp(x^2/4t) $ have the same law. This fact can be verified from either the Feynman--Kac formula or the chaos expansion. Hence, the results in Theorem~\ref{thm.main} hold with $ \calZ(2t,x)\exp(x^2/4t) $ replacing $ \calZ(t,0) $ and $ \calH(2t,x) + \frac{x^2}{4t} $ replacing $ \calH(2t,0) $.%
}
\end{remark}

Our method is based on a perturbative analysis of Fredholm determinants, and the major input is the formula \eqref{e.fred} that expresses the Laplace transform of $ \calZ(2t,0) $ as a Fredholm determinant.
We emphasize that our method \emph{differs} from existing methods used in the same context. 
The work \cite{ledoussal16long} postulates a form of the upper tail and verifies a posteriori the consistency with the formula \eqref{e.fred}; see \cite[Supp.~Mat.]{doussal2016large}. There are, however, infinitely many postulated forms that are consistent with \eqref{e.fred}. We explain this phenomenon in Section~\ref{sect.nonunique}. There we reprint the consistency check as a variational problem~\eqref{e.variational}, which has infinitely many solutions given in~\eqref{e.nonunique}.
The work \cite{corwin2013crossover} utilizes an formula of the tail probabilty of $ \calH(2t,0)+\frac{t}{12} $, under the Brownian initial data. Such a formula can be viewed as the inverse Laplace transform of~\eqref{e.fred}. By analyzing the inverse Laplace transform formula, it was shown \cite[Corollary 14]{corwin2013crossover} that there exists constants $c_1,c_2,c_3$ such that for all $y>0$ and large enough $t$ 
\begin{align*}
	\Pr\left[\calH(2t,0)+\tfrac{t}{12}\ge ty\right] \le c_1t^{1/2}e^{-c_2yt}+c_1t^{1/2}e^{-c_3y^{3/2}t}.
\end{align*}
This bound exhibits the $\frac32$-power law for small $y$ but becomes linear in $ y $ (in the exponent) for large $y$.
In this paper we employ a new way of utilizing the formula \eqref{e.fred}, by applying it for getting the $ p $-moment growth of $ \calZ(2t,0) $.

The main body of our proof is devoted to proving Theorem~\ref{thm.main}\ref{thm.main.mom}, or more precisely its refined version Theorem~\ref{thm.main.} stated in the following. From Theorem~\ref{thm.main}\ref{thm.main.mom} standard argument produces Theorem~\ref{thm.main}\ref{thm.main.ldp}, with the rate function $ -\frac{4}{3}y^{3/2} $ being the Legendre transform of $ \frac{p^3}{12} $.
The first \revi{indication} of Theorem~\ref{thm.main}\ref{thm.main.mom} being true came form the study of positive integer moments of \eqref{e.thm.mom}.
The mixed joint moment of $ \calZ $ solves the delta Bose gas, and the delta Bose operator can be diagonalized by the Bethe ansatz.
The work~\cite{kardar87} carried out such analysis and pointed out that \eqref{e.thm.mom} should hold for positive integers, i.e.,
\begin{align}
	\tag{\ref*{e.thm.mom}-int}
	\label{e.thm.mom.}
	\lim_{t\to\infty} \frac{1}{t} \log \Ex\Big[ e^{n(\calH(2t,0)+\frac{t}{12})} \Big]
	=
	\frac{n^3}{12},
	\quad
	\text{for } n \in \Z_{> 0}.
\end{align}
This assertion~\eqref{e.thm.mom.} was proven in \cite{chen2015precise} for function-valued, bounded initial data, and in \cite[Lemma 4.5]{corwin2018kpz} for the delta initial data considered here.
It has long been speculated and conjectured that~\eqref{e.thm.mom.} should extend to all positive real $ p $. However, the connection to the delta Bose gas only gave access to integer moments. Here, by utilizing a known formula but in an unconventional way, we bridge the gap between integers. In the same spirit as \cite[Lemma 4.5]{corwin2018kpz}, we provide a quantitative bound on the $ p $-th moment of $ \calZ $ that holds for all $ t $ and $ p $ away from $ 0 $. This is stated as a refined version of Theorem~\ref{thm.main}\ref{thm.main.mom} as

\begin{customthm}{{\ref*{thm.main}\ref*{thm.main.mom}}} \label{thm.main.} 
Let $ \calZ $ be as in Theorem~\ref{thm.main}.
We have a decomposition 
\begin{align*}
	\Ex\big[ \big(\calZ(2t,0)e^{\frac{t}{12}}\big)^p \big]
	=
	\calA_p(t) + \calB_p(t)
\end{align*}
of the $ p $-th moment of $ \calZ(2t,0)e^{\frac{t}{12}} $ into a leading term $ \calA_p(t) $ and remainder term $ \calB_p(t) $.
For any $ t_0,p_0>0 $, there exists a constant $ \Con=\Con(t_0,p_0)>0 $ that depends only on $ t_0,p_0 $, such that
for all $ t\geq t_0 $ and $ p\geq p_0 $,
\begin{align}
	\label{e.Aest}
	\frac1{\Con} p^{-\frac32}\Gamma(p+1)\,t^{-\frac12}\, e^{\frac{p^3t}{12}}
	\leq
	&\calA_p(t) \leq \Con p^{-\frac32}\Gamma(p+1)\,t^{-\frac12}\, e^{\frac{p^3t}{12}},
\end{align}
and for $ \revi{n:=\lfloor p\rfloor+1}\in\Z_{>0} $ and $ \kappa_p:=\min\{\frac{1}6,\frac{p^3}{16}\} $,
\begin{align}
\label{e.Bbd}
&|\calB_p(t)| \leq n\cdot(n!)^2 \,(n\Con)^{n} \, t^{\frac12} \, e^{\frac{p^3t}{12}-\kappa_p t}.
\end{align}
\end{customthm}
\noindent
From the bounds~\eqref{e.Aest} and \eqref{e.Bbd}, one see that $ \calA_p(t) $ dominates as $ t\to\infty $,
uniformly over any close intervals in $ (0,\infty)\ni p $.
Theorem~\ref{thm.main.} immediately implies Theorem~\ref{thm.main}\ref{thm.main.mom}.

The upper tail problem has also been studied for several other models in the class of integrable systems starting from the fluctuation results and \ac{LDP} for the longest increasing subsequence \cite{kim1996increasing,seppalainen1998large,deuschel1999increasing,baik1999distribution}. There are also analogous results on upper-tail \ac{LDP} for integrable polymer models \cite{georgiou13,janjigian15}, and also for last passage percolation in Bernoulli and white noise environments \cite{ciech18,janjigian19} and inhomogeneous corner growth models \cite{emrah15}.

The main input of our proof is the known formula \eqref{e.fred} that express the Laplace transform of $ \calZ(2t,0) $ as a Fredholm determinant.
There are multiple equivalent ways to define Fredholm determinants~\cite{simon77}.
We will work with the exterior algebra definition: for a trace-class operator $ T $ on a Hilbert space,
consider $ \bigwedge_{i=1}^L H $ and the operator $ T^{\wedge L} $ defined by $  T^{\wedge L}(v_1\wedge\cdots\wedge v_L) := (Tv_1)\wedge\cdots\wedge (Tv_L) $. The operator $ T^{\wedge L} $ is trace-class on  $ \bigwedge_{i=1}^L H $.
We then define the Fredholm determant as
\begin{align*}
	\det(I-T) := 1 + \sum_{L=1}^\infty (-1)^L \tr(T^{\wedge L}).
\end{align*}
\revi{The following formula is known thanks to the integrability of the \ac{SHE} and related models:}
\begin{align}
	\label{e.fred}
	\Ex\big[ \exp(-s\calZ(2t,0)e^{\frac{t}{12}}\big)\big] 
	= 
	\det(I-K_{s,t})
	=
	1 +\sum_{L=1}^\infty (-1)^L \tr(K_{s,t}^{\wedge L}),
\end{align} 
where $ K_{s,t} $ is an integral operator $ L^2(\R_{\geq 0}) $ with the kernel
\begin{align}
	\label{e.K}
	K_{s,t}(x,y):=\int_{\R} \frac{\ai(x+r)\ai(y+r)}{1+\frac1se^{-t^{1/3}r}}\d r,
\end{align}
and $\ai(x) $ is the Airy function.
It is standard to check that $ K_{s,t} $ is a positive operator via the square-root trick, c.f., Lemma~\ref{l.sqtrick}.
\revi{%
The formula \eqref{e.fred} or its closely related forms was first derived simultaneously and independently in \cite{amir10,calabrese10,dotsenko10,sasamoto10}, with a rigorous proof given in \cite{amir10} based on results of \cite{tracy09}. In particular, the formula \eqref{e.fred} can be obtained by taking Laplace transform of \cite[Eq.\,(1.13)]{amir10}. A direct derivation of \eqref{e.fred} with a rigorous proof can be found in \cite{borodin2014free}; see Theorem~1.10\,(a) and Eq.\,(1.7) therein.%
}

A standard way to extract tail information from~\eqref{e.fred} is to parameterize $ s=e^{-ty} $ and substitute in $ \calZ(2t,0)=\exp(\calH(2t,0)) $ to get
\begin{align}
	\label{e.fred.}
	\Ex\big[ \exp(-e^{\calH(2t,0)+\frac{t}{12}-ty})\big] 
	= 
	1 - \tr(K_{s,t}) + \sum_{L=2}^\infty (-1)^L \tr(K_{s,t}^{\wedge L}). 
\end{align} 
The double exponential function $ \exp(-e^{\Cdot}) $ on the l.h.s.\ of \eqref{e.fred.} may be deemed as a good proxy of the indicator function $ \ind_{(-\infty,0)} $, and hence analyzing the r.h.s.\ of~\eqref{e.fred.} could produce information on $ \Pr[ \calH(2t,0)+\frac{t}{12}<ty ] $. This approximation procedure has been successfully implemented in getting the limiting fluctuations and lower-tail LDP (but using different representations of the r.h.s.~than the Fredholm determinant). 

\subsection{An issue of nonuniqueness}
\label{sect.nonunique}
However, for the upper tail, the preceding procedure would not produce the full \ac{LDP}.
To see this, rewrite~\eqref{e.fred.} as
\begin{align}
	\label{e.explain1}
	\Ex\big[ 1-\exp(-e^{\calH(2t,0)+\frac{t}{12}-ty})\big] 
	= 
	\sum_{L=1}^\infty (-1)^{L-1} \tr(K^{\wedge L}_{e^{-ty},t}).
\end{align} 
For $ y>0 $, it is possible to show that the r.h.s.\ of~\eqref{e.explain1} is dominated by the $ L=1 $ term as $ t\to\infty $, 
and analyzing the trace of $ K_{s,t} $ from the formula~\eqref{e.K} should yield
\begin{align*}
	\lim_{t\to\infty} \frac{1}{t}\log\big( \text{r.h.s.\ of }\eqref{e.explain1} \big) = I(y) 
	:= 
	\left\{\begin{array}{l@{,\quad}l}
		-\frac{4}{3}y^{3/2}& y\in(0,\frac14],
	\\
		\frac{1}{12}-y	& y\in(\frac14,\infty).
	\end{array}\right.
\end{align*}
For the left hand side, if we assume the existence of the upper-tail \ac{LDP} but with an unknown rate function,
i.e., $ \lim_{t\to\infty} \frac{1}{t} \log \Pr[ \calH(2t,0)+\frac{t}{12}>ty ] = - \Phi_+(y) $, for $ y\in(0,\infty) $,
using the fact that $ 1-\exp(-e^{t\xi}) \approx \exp(t\revi{\min\{\xi,0\}}) $, as $ t\to\infty $, we should have
\begin{align*}
	\lim_{t\to\infty} \frac{1}{t} \log \Ex\big[ 1-\exp(-e^{\calH(2t,0)+\frac{t}{12}-ty})\big] 
	=
	\sup_{\xi>0} \big\{ \min\{\xi-y,0\}-\Phi_+(\xi) \big\}. 
\end{align*}
Putting these two sides together suggests the variational problem
\begin{align}
	\label{e.variational}
	\sup_{\xi>0} \big\{ \min\{\xi-y,0\}-\Phi_+(\xi) \big\}
	= 
	\left\{\begin{array}{l@{,\quad}l}
		-\frac{4}{3}y^{3/2}& y\in(0,\frac14],
	\\
		\frac{1}{12}-y	& y\in(\frac14,\infty).
	\end{array}\right.
\end{align}
The function $ \Phi_+(y) = \frac{4}{3}y^{3/2} $ does solve this variational problem. However, the solution is \emph{not} unique. 
\emph{Any} function that satisfies
\begin{align}
	\label{e.nonunique}
	\Phi_+(y)=-\tfrac{4}{3}y^{3/2},\text{ for } y\in(0,\tfrac14],
	\qquad
	\tfrac{1}{12}-y \leq \Phi_+(y) \leq \tfrac{4}{3}y^{3/2},\text{ for } y\in(\tfrac14,\infty)
\end{align}
solves the preceding variational problem.

The preceding calculations strongly suggest that the conventional scheme~\eqref{e.fred.} and \eqref{e.explain1}
of using the Fredholm determinant would not produce the exact rate function.

\subsection{Our solution}
To circumvent the aforementioned issue, we provide a new way of using the formula~\eqref{e.fred}.
The start point is the following elementary identity:
\begin{lemma}\label{l.fractionalmom}
Let $U$ be a nonnegative random variable with a finite $n$-th moment, where $n\in\Z_{> 0}$. Let $\alpha\in [0,1)$ Then the $(n-1+\alpha)$-th  moment of $U$ is given by
\begin{align} \label{fmoment}
	\Ex[U^{n-1+\alpha}]
	=
	\frac1{\Gamma(1-\alpha)}\int_{0}^{\infty} s^{-\alpha}\Ex[U^n e^{-sU}]\,\d s=\frac{(-1)^n}{\Gamma(1-\alpha)}\int_{0}^{\infty} s^{-\alpha}\frac{\d^n~}{\d s^n}\Ex[e^{-sU}]\,\d s.
\end{align}
\end{lemma}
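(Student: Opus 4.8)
The plan is to prove the two claimed identities separately, starting from the classical integral representation of fractional powers and then invoking the standard Laplace-transform–derivative dictionary.

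\textbf{Step 1: The Gamma-function identity.} First I would recall the elementary fact that for any $\lambda > 0$ and $\alpha\in[0,1)$,
\begin{align*}
	\lambda^{-(1-\alpha)} = \frac{1}{\Gamma(1-\alpha)}\int_0^\infty s^{-\alpha} e^{-s\lambda}\,\d s,
\end{align*}
which is just the substitution $s = u/\lambda$ in the definition $\Gamma(1-\alpha)=\int_0^\infty u^{-\alpha}e^{-u}\,\d u$ (valid since $0\le\alpha<1$ makes the integral converge at both ends). Multiplying both sides by $\lambda^n$ and writing $\lambda = U$ gives, pointwise on the event $\{U>0\}$,
\begin{align*}
	U^{n-1+\alpha} = \frac{1}{\Gamma(1-\alpha)}\int_0^\infty s^{-\alpha} U^n e^{-sU}\,\d s.
\end{align*}
On $\{U=0\}$ both sides vanish (the left since $n-1+\alpha>0$ when $n\ge 1$; one should note the harmless degenerate case $n=1,\alpha=0$ where the statement is the triviality $\Ex[U^0]=\Ex[U^0]$, or simply restrict attention to $n\ge 1$ and read $U^0:=1$). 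So the identity holds almost surely.

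\textbf{Step 2: Taking expectations and justifying Tonelli.} I would then apply Tonelli's theorem to interchange $\Ex$ and $\int_0^\infty\d s$: the integrand $s^{-\alpha}U^n e^{-sU}$ is nonnegative, so no integrability hypothesis beyond measurability is needed to get
\begin{align*}
	\Ex[U^{n-1+\alpha}] = \frac{1}{\Gamma(1-\alpha)}\int_0^\infty s^{-\alpha}\,\Ex[U^n e^{-sU}]\,\d s,
\end{align*}
which is the first equality in \eqref{fmoment}. (The finiteness of the left side is then automatic from finiteness of $\Ex[U^n]$, since $U^{n-1+\alpha}\le 1 + U^n$; this also confirms the formula is not vacuous.)

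\textbf{Step 3: Rewriting via derivatives of the Laplace transform.} For the second equality, observe that $\Ex[e^{-sU}]$ is smooth in $s>0$ and, by dominated convergence (differentiating under $\Ex$, with dominating function controlled using $U^k e^{-sU}\le C_k$ for $s$ in a compact subinterval of $(0,\infty)$, and finiteness of $\Ex[U^n]$ handling the $k=n$ boundary case), one has
\begin{align*}
	\frac{\d^n}{\d s^n}\Ex[e^{-sU}] = \Ex\big[(-U)^n e^{-sU}\big] = (-1)^n\,\Ex[U^n e^{-sU}].
\end{align*}
Substituting this into the result of Step 2 yields the second equality in \eqref{fmoment}, completing the proof. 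The only point requiring genuine care — and thus the main (mild) obstacle — is the differentiation-under-the-expectation in Step 3 near the endpoint $k=n$, where the dominating function $U^n e^{-sU}$ is not uniformly bounded by a constant; this is resolved by noting $U^n e^{-sU}$ is itself integrable (that is exactly the hypothesis $\Ex[U^n]<\infty$, since $e^{-sU}\le 1$) and dominates the difference quotients locally uniformly in $s$, so the standard dominated-convergence criterion for differentiating under the integral applies on each compact subinterval of $(0,\infty)$, which suffices since we only integrate $s$ over $(0,\infty)$ and the pointwise identity is what we need.
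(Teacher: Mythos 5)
Your proof is correct and follows essentially the same route as the paper, which dispatches this lemma in one sentence by pointing to the Fubini/Tonelli interchange — precisely your Steps 1 and 2, with Step 3 being the standard differentiation-under-the-expectation identity for the Laplace transform that the paper leaves implicit. Your extra caution at $k=n$ in Step 3 is harmless but unnecessary: for $s\geq s_0>0$ the function $u\mapsto u^n e^{-s_0 u/2}$ is bounded on $[0,\infty)$, so a constant dominates the difference quotients without any appeal to $\Ex[U^n]<\infty$.
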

\noindent The proof of this lemma follows by an interchange of measure via Fubini's theorem. 
We will apply this lemma with $U=\calZ(2t,0)e^{\frac{t}{12}}$ and with $n:=\lfloor p \rfloor +1 \in \Z_{> 0}$ and $\alpha:=p-\lfloor p \rfloor \in [0,1)$ so that $p=n-1+\alpha$. 

Utilizing the formula~\eqref{e.fred} for $ \Ex[e^{-sU}]=\Ex[e^{-s\calZ(2t,0)e^{\frac{t}{12}}}] $ in~\eqref{fmoment},
we will then be able to express the $ p $-th moment of $ \calZ(2t,0)e^{\frac{t}{12}} $ as a series.
From this series we identify the leading term and higher order terms. 
This eventually leads to the desired estimate in Theorem~\ref{thm.main.}.

\revi{%
It seems possible to directly analyze the inverse Laplace transform formula in \cite[Theorem 1.1]{amir10}.
Doing so may provide an alternative proof of Theorem~\ref{thm.main}\ref{thm.main.ldp}.}

\subsection*{Outline.} 
In Section~\ref{sec.prelim} we setup the framework of the proof.
Namely we introduce an expansion of the $ p $-th moment of $ \calZ $, identify a trace term as the leading term, and establish several technical lemmas.
In Section~\ref{sec.trace}, we give precise asymptotics of the leading trace term,
and in Section~\ref{sec.higher} we establish bounds on the remaining terms.
Finally, in Section~\ref{sec.pfthm}, we collect results from previous sections to give a proof of Theorem~\ref{thm.main} and Theorem~\ref{thm.main.}.

\subsection*{Acknowledgements.} 
We thank Ivan Corwin for suggesting the problem and giving us useful inputs in an earlier draft of the paper. We thank Promit Ghosal and Shalin Parekh for helpful conversations and discussions. 
We thank Chris Janjigian and Pierre Le Doussal for useful comments on improving the presentation of this paper.
\revi{%
We thank the anonymous referees for their careful reading and useful comments on improving our manuscript. 
The phenomenon stated in Remark~\ref{r.speed} was pointed out to us by a referee during the reviewing process.%
}

SD's research was partially supported from Ivan Corwin’s NSF grant DMS-1811143.
LCT's research was partially supported by the NSF through DMS-1712575.

\section{Basic framework} \label{sec.prelim}
Throughout this paper we use $\Con=\Con(a,b,c,\ldots)>0$ to denote a generic deterministic positive finite constant 
that may change from line to line, but dependent on the designated variables $a,b,c,\ldots$. 

As mentioned previously, we will utilize Lemma~\ref{l.fractionalmom} and \eqref{e.fred} to develop a series expansion for $ \Ex[(\calZ(2t,0)e^{\frac{t}{12}})^p] $.
This, however, requires a truncation at $ s=1 $ first.
To see why, referring to~\eqref{e.fred.}, with $ s=e^{-ty} $, we see that $ s<1 $ corresponds to upper tail while $ s>1 $ corresponds to lower tail.
While we expect the later to have minor contribution in the regime $ p>0 $ we are probing, it is known that for $ s\gg 1 $ the Fredholm determinant \eqref{e.fred.} behaves \revi{in an oscillatory fashion as $t\to\infty$}.
With  $n:=\lfloor p \rfloor +1 \in \Z_{> 0}$ and $\alpha:=p-\lfloor p \rfloor \in [0,1)$, we truncate
\begin{align} \label{e.fmoment}
	\Ex\big[(\calZ(2t,0)e^{\frac{t}{12}})^p\big]
	=
	\frac{(-1)^n}{\Gamma(1-\alpha)}\int_{0}^{1} s^{-\alpha} \, \partial^n_s \, \Ex[e^{-s\calZ(2t,0)e^{\frac{t}{12}}}]\,\d s
	+
	\calB_{p,1}(t),
\end{align}
where 
\begin{align}
	\label{e.calBo}
	\calB_{p,1}(t) := \frac1{\Gamma(1-\alpha)}\int_{1}^{\infty} s^{-\alpha}\Ex[U^n e^{-sU}]\,\d s,
	\quad
	U:=\calZ(2t,0)e^{\frac{t}{12}}.
\end{align}
For this term $ \calB_{p,1}(t) $ we bound
\begin{align*}
	0 \leq \calB_{p,1}(t)
	=
	\frac1{\Gamma(1-\alpha)}\int_{1}^{\infty} s^{-n-\alpha}\Ex[(sU)^n e^{-sU}]\,\d s
	\leq
	\frac1{\Gamma(1-\alpha)}  \, \sup_{x \geq 0} \big\{ x^{n}e^{-x} \big\} \, \frac1{n+\alpha-1}.
\end{align*}
Recognize $ n+\alpha-1=p $, and apply the bounds $ \frac1{\Gamma(1-\alpha)} \leq \Con $, for $ \alpha\in[0,1) $,
and $ \sup_{x \geq 0} \{ x^{n}e^{-x} \} \leq n^n $.
\begin{align}
	\label{e.calB0bd}
	|\calB_{p,1}(t)|
	\leq
	\Con \, p^{-1} \, n^n.
\end{align}
The bound~\eqref{e.calB0bd} does not grow with $ t $, and hence $ \calB_{p,1}(t) $ will be a subdominant term.

Next, we wish to take $ \partial_s^n $ in the Fredholm determinant expansion~\eqref{e.fred} and develop the corresponding series.
Assuming (justified later) the derivative can be passed into the sum, we have
\begin{align}
	\notag
	&\frac{(-1)^n}{\Gamma(1-\alpha)}\int_{0}^{1} s^{-\alpha} \,\partial_s^n \,\Ex[e^{-s\calZ(2t,0)e^{\frac{t}{12}}}]\,\d s
\\
	\label{e.derived}
	=&
	\frac{(-1)^n}{\Gamma(1-\alpha)} \int_{0}^{1} s^{-\alpha} \,\partial_s^n \Big( \sum_{L=1}^\infty (-1)^L \tr(K_{s,t}^{\wedge L}) \Big) \d s
	=
	\frac{(-1)^n}{\Gamma(1-\alpha)} \int_0^1 s^{-\alpha}\, \sum_{L=1}^\infty (-1)^L\, \partial_s^n \,\tr(K_{s,t}^{\wedge L}) \, \d s. 
\end{align}
The passing of derivatives into sums will be justified in Lemma~\ref{l.diff.term},
and in Sections~\ref{sec.trace} and \ref{sec.higher.justify}, we will show that $ \tr(K_{s,t}^{\wedge L}) $ is infinitely differentiable in $ s $. As it turns out, the $ L=1 $ term dominates. We then let
\begin{align}
	\label{e.calA}
	\til{\calA}_{p}(t) &:= \frac{(-1)^{n+1}}{\Gamma(1-\alpha)}\int_0^1 s^{-\alpha}\,\partial^n_s\, \tr(K_{s,t}) \, \d s,
\\
	\label{e.calBL}
	\calB_{p,L}(t) &:= \frac{(-1)^{n+L}}{\Gamma(1-\alpha)}\int_0^1 s^{-\alpha}\,\partial^n_s\, \tr(K^{\wedge L}_{s,t}) \, \d s, \quad L\geq 2
\end{align}
denote the leading and higher order terms.

In the following we will work with \revi{the} Schatten \revi{norms} of operators. Recall that, for $ u\in[1,\infty] $ and for a compact \revi{operator} $ T $ on $ L^2(\R_{\geq 0}) $,
the \textbf{$ u $-th Schatten norm} of $ T $ is defined as 
\begin{align*}
	\norm{T}_u := \big( \tr(T^*T)^{u/2} \big)^{1/u} = \Big( \sum_{i=1}^\infty s_i(T)^{u} \Big)^{1/u},
\end{align*}
with the convention $ \norm{T}_\infty:= \lim_{u\to\infty} \norm{T}_u $,
where $ s_i(T) $, $ i\in\Z_{>0} $, are the singular values of $ T $.
In particular, $ u=1 $ gives the trace norm, $ u=2 $ gives the Hilbert--Schmidt norm,
and $ u=\infty $ gives the operator norm
$ \norm{T}_\op := \sup\{ \frac{\normm{Tf}}{\normm{f}} : f\in L^2(\R_{\geq 0})\setminus\{0\} \} $, where $ |f| := (\int_{0}^\infty |f(x)|^2 \, \d x)^{1/2} $ denotes the norm on $ L^2(\R_{\geq 0})$.
The Schatten norm decreases in $ u $, so the trace norm is the strongest among all $ u\in[1,\infty] $.
We will use the following `square-root trick' to evaluate the trace norm of some operators.

\begin{lemma}\label{l.sqtrick}
Consider a square-integrable kernel $\revi{J(r,y)}$ \revi{with} $ \revi{\int_{\R_+} ( \int_{\R} |J(r,y)|^2 \, \d r) \d y  <\infty }$.
Then the integral operator $ T $ on $ L^2(\R_{\geq 0}) $ with the kernel
\begin{align*}
	T(x,y) := \int_{\R} \revi{\bar{J}(r,x)} \, J(r,y) \, \d r
\end{align*}
is positive and trace-class, with $ \revi{\tr(T) = \norm{T}_1 = \int_{\R_+} (\int_{\R}|J(r,y)|^2  \, \d r) \d y  }  $.
\end{lemma}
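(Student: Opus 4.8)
The plan is to realize $T$ as $A^*A$ for the Hilbert--Schmidt operator $A$ naturally attached to the kernel $J$; positivity, the trace-class property, and the value of $\tr(T)$ then all drop out of standard operator theory. First I would introduce the integral operator $A\colon L^2(\R_{\geq 0})\to L^2(\R)$ defined by $(Af)(r):=\int_{\R_+} J(r,y)f(y)\,\d y$. The hypothesis $\int_{\R_+}\big(\int_{\R}|J(r,y)|^2\,\d r\big)\d y<\infty$ says exactly that $J\in L^2(\R\times\R_{\geq 0})$, so $A$ is Hilbert--Schmidt (in particular bounded) with $\norm{A}_2^2=\int_{\R_+}\big(\int_{\R}|J(r,y)|^2\,\d r\big)\d y$; its adjoint $A^*\colon L^2(\R)\to L^2(\R_{\geq 0})$ is the integral operator with kernel $\bar J(r,y)$ (in the variables $(y,r)$) and is likewise Hilbert--Schmidt.

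Next I would check that the integral operator with kernel $T(x,y)=\int_{\R}\bar J(r,x)J(r,y)\,\d r$ is precisely $A^*A$. Writing $g(x):=\int_{\R}|J(r,x)|^2\,\d r$, the hypothesis gives $g\in L^1(\R_{\geq 0})$, hence $g(x)<\infty$ for a.e.\ $x$, and Cauchy--Schwarz yields $\int_{\R}|J(r,x)|\,|J(r,y)|\,\d r\leq g(x)^{1/2}g(y)^{1/2}$ for a.e.\ $(x,y)$. This single estimate does all the measure-theoretic work: it shows the kernel $T(x,y)$ is absolutely convergent a.e., that $\iint_{\R_+^2}|T(x,y)|^2\,\d x\,\d y\leq\big(\int g\big)^2<\infty$, and (by a Fubini interchange that the same bound justifies) that $(A^*Af)(x)=\int_{\R_+}T(x,y)f(y)\,\d y$ for every $f\in L^2(\R_{\geq 0})$. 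Hence $T=A^*A$ as operators.

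Finally I would invoke the standard facts: $A^*A$ is positive for any bounded $A$, so $T$ is positive; a product of two Hilbert--Schmidt operators is trace-class, so $T=A^*A$ is trace-class; $\tr(T)=\tr(A^*A)=\norm{A}_2^2=\int_{\R_+}\big(\int_{\R}|J(r,y)|^2\,\d r\big)\d y$; and since $T\geq 0$ one has $\norm{T}_1=\tr(T)$. (Positivity is also transparent from the identity $\langle Tf,f\rangle=\norm{Af}^2\geq 0$, which is exactly the "square-root trick.") I do not expect a genuine obstacle: the only point demanding care is the justification that the displayed integral kernel really represents $A^*A$, i.e.\ a.e.\ finiteness of $T(x,y)$ and the Fubini interchange, and both are supplied by the Cauchy--Schwarz bound $\int_{\R}|J(r,x)|\,|J(r,y)|\,\d r\leq g(x)^{1/2}g(y)^{1/2}$ together with $g\in L^1(\R_{\geq 0})$.
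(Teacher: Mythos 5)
Your proof is correct and follows essentially the same route as the paper: both exploit the factorization $T=A^*A$ (the paper writes $T=J^*J$, embedding everything into $L^2(\R)$ via an indicator, while you map directly between $L^2(\R_{\geq 0})$ and $L^2(\R)$ — a purely cosmetic difference). The one place you diverge usefully is the trace computation: you read off $\tr(T)=\tr(A^*A)=\norm{A}_2^2$ directly, which is elementary, whereas the paper instead appeals to Brislawn's theorem on computing the trace by integrating the kernel along the diagonal — a heavier tool than is needed at this point (though the paper does need Brislawn later, for $\tr(K^{(n)}_{s,t})=\int K^{(n)}_{s,t}(x,x)\,\d x$). Your careful Cauchy--Schwarz justification of the Fubini step and the a.e.\ finiteness of $T(x,y)$ is welcome rigor that the paper leaves implicit.
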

\begin{proof}
It is more convenient to embed $ T $ into operators on $ L^2(\R) $.
We do this by setting the kernel
\begin{align*}
	T(x,y) := \ind_{\R_{\geq 0}}(x) \ind_{\R_{\geq 0}}(y) \, \int_{\R} \revi{\bar{J}(r,x)} \, J(r,y) \, \d r
\end{align*}
to be zero outside $ (x,y)\in\R_{\geq 0}^2 $.
This way we have the factorization $ T=J^*J $, where $ J $ is an operator on $ L^2(\R) $
with kernel $ \ind_{\R_{\geq 0}}(y) J(r,y) $.
The square integrability of $ J(r,y) $ guarantees that the operator $ J $ is Hilbert--Schmidt,
and the Cauchy--Schwartz inequality $ \norm{T_1T_2}_1 \leq \norm{T_1}_2\norm{T_2}_2 $ applied with $ \revi{T_1=J^*, T_2=J} $ concludes that $ T $ is trace-class, \revi{whence $\tr(T)= \int_{\R_+}(\int_{\R} |J(r,y)|^2 \, \d r) \d y $ by Theorem 3.1 in \cite{brislawn1991traceable}. The factorization $ T=J^*J $ implies that $ T $ is positive, whence  $ \tr(T) = \norm{T}_1$}.
\end{proof}
\noindent{}Lemma~\ref{l.sqtrick} applied with $\revi{ J(r,y) = \ai(y+r)(1+\frac1se^{-t^{1/3}r})^{-1/2} }$ proves that the operator $ K_{s,t} $ (defined in~\eqref{e.K}) is positive and trace-class.

Much of our subsequent analysis boils down to estimating integrals involving the Airy function $ \ai(x) $. 
Here we prepare two technical lemmas that will be frequently used. To setup the notation, set
\begin{align}
	\label{e.aisq}
	\aisq(y) := \int_y^\infty \ai^2(x) \, \d x.
\end{align}
Using the Airy differential equation, one can explicitly compute the antiderivative of $\ai(x)^2$ to get $\aisq (y)=\ai'(y)^2-y\ai(y)^2$. Using known expansions of $ \ai(x), \ai'(x) $ for $ |x| \gg 1 $, e.g., Equation (1.07), (1.08), and (1.09) in Chapter 11 of \cite{olver1997asymptotics}, we have that, for all $ y \geq 0 $ and for some universal $ \Con>0 $,
\begin{align} 
	\label{u-low}
	\frac1{\Con}(\sqrt{|y|}+1) \le &\aisq ( -y) \le \Con\,(\sqrt{|y|}+1),
\\
	\label{u-up}
	\tfrac1{\Con\, (y+1)}e^{-\frac43y^{3/2}} \le &\aisq ( y) \le \tfrac\Con{y+1}e^{-\tfrac43y^{3/2}}.
\end{align}
Also consider
\begin{align}
	\label{e.expq}
	\expf_q(x):= qx^2-\tfrac{4}{3}x^{3},
\end{align}
which enjoys the property
\begin{align}
	\label{e.expq.property}
	\expf_q(x) \text{ increases on } x\in[0,\tfrac{q}{2}] \text{ and decreases on } x\in[\tfrac{q}{2},\infty), 
	\quad
	\revi{\expf_q(\tfrac{q}{2}) = \tfrac{q^3}{12}}.
\end{align}

\begin{lemma}\label{ap-int1} 
Fix $ t_0,q_0\in (0,\infty)$. There exists a constant $\Con(t_0,q_0)>0$, such that for all $t\ge t_0 $ and $q\ge q_0$, 
\begin{align} \label{ap-int-eq}
	\frac1{\Con(t_0,q_0)} t^{-7/6}q^{-3/2}e^{\frac{q^3t}{12}} 
	\le 
	\int_{\R} e^{qrt}\aisq ( t^{2/3}r)\,\d r
	\le 
	{\Con(t_0,q_0)} t^{-7/6}q^{-3/2}e^{\frac{q^3t}{12}}. 
\end{align}
\end{lemma}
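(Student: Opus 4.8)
The plan is to substitute $r = t^{-2/3} u$ so that $\aisq(t^{2/3}r) = \aisq(u)$ and $e^{qrt} = e^{q t^{1/3} u}$, giving
\begin{align*}
	\int_{\R} e^{qrt}\aisq(t^{2/3}r)\,\d r = t^{-2/3}\int_{\R} e^{q t^{1/3} u}\,\aisq(u)\,\d u.
\end{align*}
I would then split the $u$-integral at $0$ and estimate the two halves separately using the two-sided bounds \eqref{u-low} and \eqref{u-up} on $\aisq$. On the negative side, writing $u=-v$ with $v>0$, the integrand is $e^{-qt^{1/3}v}\aisq(-v) \asymp e^{-qt^{1/3}v}(\sqrt v + 1)$; this integral is elementary (a Gamma-type integral) and for $t\ge t_0$, $q\ge q_0$ it is bounded above by $\Con(t_0,q_0)(qt^{1/3})^{-3/2}$, which is lower-order compared to the claimed $t^{-1/2}q^{-3/2}e^{q^3 t/12}$ after reinstating the $t^{-2/3}$ prefactor (indeed $t^{-2/3}\cdot t^{-1/2} = t^{-7/6}$, and $e^{q^3t/12}\to\infty$), so it contributes only to the implicit constant and does not affect either the upper or lower bound. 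On the positive side, \eqref{u-up} gives $e^{qt^{1/3}u}\aisq(u) \asymp (u+1)^{-1} e^{qt^{1/3}u - \frac43 u^{3/2}}$.

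The heart of the argument is the Laplace-type asymptotics of $\int_0^\infty (u+1)^{-1} e^{q t^{1/3}u - \frac43 u^{3/2}}\,\d u$. I would substitute $u = t^{1/3} x$ (hence $\d u = t^{1/3}\d x$) to turn the exponent into $t\big(q x - \tfrac43 x^{3/2}\big)$; then, substituting $x = w^2$ and recalling the definition \eqref{e.expq}, the exponent becomes $t\,\expf_q(w)$ with $\expf_q(w) = qw^2 - \tfrac43 w^3$, whose behavior is recorded in \eqref{e.expq.property}: it has a unique interior maximum at $w = q/2$ with value $\expf_q(q/2) = q^3/12$. A standard Laplace/Watson-lemma estimate at this non-degenerate maximum — localize to a neighborhood of $w=q/2$, Taylor expand $\expf_q$ to second order ($\expf_q''(q/2) = 2q - 8\cdot\frac{q}{2}\cdot\frac12\cdots$; in any case $\expf_q''(q/2)\ne 0$ with the right sign) — produces the Gaussian integral contributing a factor $\asymp (t\,|\expf_q''(q/2)|)^{-1/2} \asymp t^{-1/2} q^{-1/2}$, times the polynomial prefactor evaluated near the maximum and the Jacobians from the two substitutions. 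Carefully bookkeeping all the powers of $t$ and $q$ from the substitutions $r\mapsto u\mapsto x\mapsto w$ and the prefactor $(u+1)^{-1}$ should reproduce exactly $t^{-7/6} q^{-3/2} e^{q^3 t/12}$. For the matching lower bound I would simply restrict the integral to a fixed small interval around $w = q/2$ and bound $\expf_q$ below by its quadratic Taylor polynomial there.

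The main obstacle is making the Laplace estimate \emph{uniform} in $q\ge q_0$ and $t\ge t_0$ simultaneously, rather than for fixed $q$ as $t\to\infty$. As $q$ grows the maximizer $w=q/2$ moves off to infinity and the width of the Gaussian peak, the size of $\expf_q''(q/2)$, and the value of the polynomial prefactor $(u+1)^{-1}$ near the peak all scale with $q$; one must track these $q$-dependences explicitly and check that the error terms from truncating the Taylor expansion remain controlled relative to the main term across the whole range. A clean way to handle this is to rescale $w = q\tilde w/2$ up front so that the maximizer sits at $\tilde w = 1$ independently of $q$, pulling all $q$-dependence into the exponent ($\expf_q(q\tilde w/2) = \tfrac{q^3}{4}\tilde w^2 - \tfrac{q^3}{6}\tilde w^3 = \tfrac{q^3}{12}\,g(\tilde w)$ for a fixed profile $g$ with $g(1)=1$, $g''(1)<0$) and into explicit Jacobian factors; then the estimate reduces to a single $q$- and $t$-free Laplace asymptotic for $\int e^{(q^3 t/12) g(\tilde w)}(\cdots)\,\d\tilde w$ with large parameter $q^3 t/12 \ge q_0^3 t_0 /12$, and uniformity follows. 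I would also need the elementary bound on the negative-$u$ piece and a crude tail bound (e.g.\ for $u \ge 2q^2 t^{1/3}$ say, where $qt^{1/3}u - \frac43 u^{3/2} < 0$) to discard the contribution far from the peak; these are routine.
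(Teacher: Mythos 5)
Your plan---split the integral at zero, bound the negative part by an elementary Gamma-type integral, and apply Laplace's method to the positive part around the peak of the exponent, paying attention to uniformity in $q$---is the same strategy the paper uses. There is a bookkeeping slip: to turn the exponent $qt^{1/3}u-\tfrac43 u^{3/2}$ into $t\bigl(qx-\tfrac43 x^{3/2}\bigr)$ you need $u=t^{2/3}x$ (equivalently $x=r$, i.e.\ this just undoes your first substitution $r\mapsto u=t^{2/3}r$), not $u=t^{1/3}x$; correspondingly the peak of the $u$-integral sits near $u=\tfrac14 q^2 t^{2/3}$ and the crude tail cutoff should be at $u$ of order $q^2 t^{2/3}$, not $q^2 t^{1/3}$. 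Once that is corrected the powers of $t$ and $q$ come out as you predict. Your extra rescaling $w=q\tilde w/2$ to pin the maximizer at $\tilde w=1$ is a valid way to get $q$-uniformity, though the prefactor $q^2\tilde w^2/(1+t^{2/3}q^2\tilde w^2/4)$ is still $q$- and $t$-dependent and you must check it is comparable to $t^{-2/3}$ uniformly over the effective window; the paper avoids the extra rescaling by working directly in $x=\sqrt r$ and localizing to a \emph{fixed} window $|x-\tfrac q2|\le\rho$ with $\rho=q_0/4$, chosen so that $x+\tfrac q2$ and $\tfrac43 x+q$ are both comparable to $q$ throughout, after which the Gaussian estimate is immediately uniform in $t\ge t_0$, $q\ge q_0$. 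Either implementation of Laplace's method yields the lemma.
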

\begin{proof}
Let us first give a heuristic of the proof. The idea is to apply Laplace's method.
We seek to approximate $ \int_{\R} e^{qrt}\aisq ( t^{2/3}r)\,\d r $ by $ \int_{\R} e^{tg_q(r)}\d r $, for some appropriate function $ g_q(r) $,
and search the maximum of $ g_q(r) $ over $ r\in\R $.
The bounds of $\aisq $ from \eqref{u-low} and \eqref{u-up} suggest $\log \aisq ( t^{2/3}r) \approx -\frac43tr_{+}^{3/2}$ and $ g_q(r) = qr-\frac43r_+^{3/2}  $.
This function achieves a maximum of $ q^3/12 $ at $ r=q^2/4 $, which gives the exponential factor $ \exp(\frac{q^3t}{12}) $. 
The prefactor $ t^{-7/6}q^{-3/2} $ can then be obtained from
localizing the integral around $ r=q^2/4 $ and using~\eqref{u-up} to approximate the integral as a Gaussian integral.

We now start the proof. Fix $ t_0,q_0>0 $. To simplify notation, throughout this proof we write $\Con=\Con(t_0,q_0)>0$,
and for positive functions $ f_1(a,b,\ldots), f_2(a,b,\ldots) $, we write $ f_1 \sim f_2 $ if they bound each other by a \revi{constant multiple}, i.e.,
\begin{align*}
	\tfrac{1}{\Con} f_2(a,b,\ldots) \leq f_1(a,b,\ldots) \leq \Con f_2(a,b,\ldots),
\end{align*}
within the specified ranges of the variables $ a,b,\ldots $. Set $ \revi{\rho} :=\frac{q_0}4 $. 
Divide $ \int_{\R} e^{qrt}\aisq ( t^{2/3}r)\,\d r $ into three regions and let $ \calI_1,\calI_2 $, and $\calI_3 $ denote the respective integrals:
\begin{align}\label{division}
	\hspace{-20pt}
	\Bigg( \int_{[(\frac{q}{2}-\revi{\rho})^2,(\frac{q}2+\revi{\rho})^2]}+\int_{\R_-} + \int_{\R_{\geq 0}\setminus[(\frac{q}{2}-\revi{\rho})^2,(\frac{q}2+\revi{\rho})^2]}  \Bigg) e^{qrt}\aisq ( t^{2/3}r)\d r
	:=
	\calI_1(q,t) + \calI_2(q,t)+ \calI_3(q,t).
\end{align}
As suggested by the preceding heuristics, we anticipate $ \calI_1(q,t) $ to dominate. We begin with estimating this term.
Recall $ \expf_q(x) $ from~\eqref{e.expq}. The bounds from \eqref{u-up} gives, for all $ r,t\in\R_{\geq 0} $,
\begin{align}
	\label{both-sides}
	e^{qrt}\aisq ( t^{2/3}r) \sim \frac{e^{t\expf_q(\sqrt{r})}}{1+t^{2/3}r}.
\end{align}
The function $\revi{U_q(x)}$ attains a maximum of $\frac{q^3}{12}$ at $x=\frac{q}{2}$ and 
$
	\expf_q(x)-\frac{q^3}{12}=-(x-\frac{q}2)^2(\frac43(x-\frac{q}2)+q).
$
Integrate both sides of~\eqref{both-sides} over $[(\frac{q}{2}-\revi{\rho})^2,(\frac{q}2+\revi{\rho})^2]$ and make a change of variable $\sqrt{r}-\frac{q}2\mapsto x $. We get, for all $ q,t\in\R_{\geq 0} $,
\begin{align*}
	\calI_1(q,t) \sim e^{\frac{q^3t}{12}}\int_{-\revi{\rho}}^{\revi{\rho}} \frac{2(x+\frac{q}{2})e^{-tx^2(\frac43x+q)}}{1+t^{2/3}(x+\frac{q}2)^2}\d x.
\end{align*}
The choice $\revi{\rho}=\frac{q_0}{4}$ guarantees that for all $x\in [-\revi{\rho},\revi{\rho}]$ and for all $q\ge q_0$, 
we have $\frac{q}\Con \le  \frac43x+q, x+\frac{q}2 \le \Con q$. 
Then for all $t\ge t_0 $ and $q\ge q_0$, there exists $\Con>0$ such that for $x\in [-\revi{\rho},\revi{\rho}]$,
\begin{align}\label{j3}
	\frac{1}{\Con t^{2/3}q}e^{-\Con qt x^2} \le \frac{2(x+\frac{q}{2})e^{-tx^2(\frac43x+q)}}{1+t^{2/3}(x+\frac{q}2)^2} \le \frac{\Con}{t^{2/3}q}e^{-\frac1{\Con}qtx^2}.
\end{align}
Integrate \eqref{j3} over $[-\revi{\rho},\revi{\rho}]$
and use $ \int_{-\revi{\rho}}^{\revi{\rho}} e^{-qx^2t} \, \d x \sim (tq)^{-1/2} $, for all $ t\geq t_0 $ and $ q \geq q_0$.
We now obtain, for $ t\geq t_0 $ and $ q \geq q_0 $,
\begin{align}\label{fir}
	\calI_1(q,t) \sim t^{-7/6}q^{-3/2} e^{\frac{q^3t}{12}}.
\end{align}
Having settled the asymptotics of $ \calI_1(q,t) $, we now turn to $ \calI_2(q,t),\calI_3(q,t) $.
For $\calI_2(q,t)$, use \eqref{u-low} to get
\begin{align} \label{i1}
	0 \leq \calI_2(q,t)\le \Con\int_{-\infty}^{0} e^{qrt}(\sqrt{t^{2/3}|r|}+1)\d r \leq \Con q^{-3/2}t^{-7/6}+\Con q^{-1}t^{-1}.
\end{align}
As for $\calI_3(q,t)$, 
\revi{integrate} both sides of \eqref{both-sides} over $ \R_{\geq 0}\setminus[(\frac{q}{2}-\revi{\rho})^2,(\frac{q}2+\revi{\rho})^2] $ and then make the change of variable $\sqrt{r} \mapsto x$ to get
\begin{align} \label{I2}
	0 \leq \calI_3(q,t) \le \Con e^{\frac{q^3t}{12}} \int_{\R_{\geq 0}\setminus[\revi{(\frac{q}{2}-\revi{\rho}),(\frac{q}2+\revi{\rho})}]} \frac{2xe^{-\frac{t}3(x-\frac{q}2)^2(4x+q)}}{1+t^{2/3}x^2}\d x.
\end{align}
For $x\in \R_{\geq 0}\setminus[\revi{(\frac{q}{2}-\revi{\rho}),(\frac{q}2+\revi{\rho})}] $, we have $(x-\frac{q}2)^2 \ge \revi{\rho}^2$. \revi{The AM-GM inequality inequality gives $1+t^{2/3}x^2 \ge 2t^{1/3}x$, and equivalently $\frac{2x}{1+t^{2/3}x^2} \le t^{-1/3}$}. 
Applying these bounds on the r.h.s.\ of~\eqref{I2} and then releasing the region of integration to $\R_{\geq 0}$, we get that
\begin{align}\label{j2}
	\calI_3(q,t) \le \Con e^{\frac{q^3t}{12}} t^{-4/3}\revi{\rho}^{-2} e^{-\frac{qt\revi{\rho}^2}3}.
\end{align}
It is straightforward to check that the r.h.s. of~\eqref{i1} and \eqref{j2} 
can be further bounded by $ \Con\, t^{-7/6}q^{-3/2}e^{\frac{q^3t}{12}} $, for all $ t \geq t_0 $ and $ q \geq q_0 $.
Hence
\begin{align*}
	0 \leq \calI_2(q,t)+\calI_3(q,t)
	\le 
	\Con\, t^{-7/6}q^{-3/2}e^{\frac{q^3t}{12}}.
\end{align*} 
This together with~\eqref{fir} gives the desired result~\eqref{ap-int-eq}.
\end{proof}

\begin{lemma}\label{ap-int}
Recall $ \expf_q $ from~\eqref{e.expq}.
There exists a constant $\Con=\Con(t_0,q_0)>0$ such that for all $t\ge t_0 $, $q\ge q_0$, and $y\in [0,\infty] $, 
\begin{align} \label{ap-int2-eq}
	\int_{-\infty}^{y} e^{qrt}\aisq ( t^{2/3}r)\d r 
	\le 
	\Con(t_0,q_0) \, t^{-5/6} \, \exp\big( \, t \expf_q( \,\min\{\sqrt{y},\tfrac{q}{2}\}\,)\,\big). 
\end{align}
\end{lemma}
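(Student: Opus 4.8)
The plan is to split the integral at $r=0$. Over $\R_-$, the polynomial bound \eqref{u-low} gives $\aisq(t^{2/3}r)\le\Con(t^{1/3}\normm{r}^{1/2}+1)$ for $r<0$, so $\int_{-\infty}^0 e^{qrt}\aisq(t^{2/3}r)\,\d r\le\Con\int_0^\infty e^{-qut}(t^{1/3}u^{1/2}+1)\,\d u\le\Con(q^{-3/2}t^{-7/6}+q^{-1}t^{-1})$, which for $t\ge t_0$ and $q\ge q_0$ is at most $\Con(t_0,q_0)\,t^{-5/6}$. Since $\expf_q\ge0$ on $[0,q/2]$ by \eqref{e.expq.property}, we have $\exp(t\expf_q(\min\{\sqrt y,q/2\}))\ge1$, so this contribution already lies within the asserted bound \eqref{ap-int2-eq}. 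It then remains to bound $\int_0^y e^{qrt}\aisq(t^{2/3}r)\,\d r$; for this I will write $g(r):=qr-\tfrac43 r^{3/2}=\expf_q(\sqrt r)$, so that $g'(r)=q-2\sqrt r$ and, by \eqref{e.expq.property}, $g$ increases on $[0,q^2/4]$ with $g(q^2/4)=q^3/12$. The Gaussian bound \eqref{u-up} gives $e^{qrt}\aisq(t^{2/3}r)\le\Con\,e^{tg(r)}/(1+t^{2/3}r)$ for $r\ge0$, so it suffices to estimate $\int_0^y e^{tg(r)}(1+t^{2/3}r)^{-1}\,\d r$.

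I would then split into cases on the size of $y$. If $y\ge q^2/4$ (in particular if $y=\infty$), then $\min\{\sqrt y,q/2\}=q/2$ and $\expf_q(q/2)=q^3/12$, and one simply bounds $\int_{-\infty}^y\le\int_\R$ and invokes Lemma~\ref{ap-int1}, getting $\Con\,t^{-7/6}q^{-3/2}e^{q^3t/12}\le\Con(t_0,q_0)\,t^{-5/6}e^{q^3t/12}$, as required. If $y<q^2/4$, then $\min\{\sqrt y,q/2\}=\sqrt y$ and $\expf_q(\sqrt y)=g(y)$, and I would split $[0,y]$ at $r_*:=\min\{y,q^2/16\}$. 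On $[0,r_*]\subseteq[0,q^2/16]$ one has $g'(r)=q-2\sqrt r\ge q/2$, so $g(r_*)-g(r)\ge\tfrac q2(r_*-r)$ and, after dropping the kernel factor, $\int_0^{r_*}e^{tg(r)}(1+t^{2/3}r)^{-1}\,\d r\le\int_0^{r_*}e^{tg(r)}\,\d r\le\tfrac2{qt}e^{tg(r_*)}\le\tfrac2{q_0 t}e^{tg(y)}\le\Con(t_0,q_0)\,t^{-5/6}e^{tg(y)}$. On the remaining interval $[q^2/16,y]$ (nonempty only when $y>q^2/16$) I would use \emph{both} decay mechanisms: since $r\ge q^2/16$, $(1+t^{2/3}r)^{-1}\le16\,t^{-2/3}q^{-2}$; and since $g'(s)=q-2\sqrt s\ge2(\sqrt y-\sqrt s)\ge(y-s)/\sqrt y$ for $s\in[r,y]$ (using $y<q^2/4$, so $q-2\sqrt y\ge0$), integrating gives $g(y)-g(r)\ge(y-r)^2/(2\sqrt y)$. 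Hence $\int_{q^2/16}^y e^{tg(r)}(1+t^{2/3}r)^{-1}\,\d r\le16\,t^{-2/3}q^{-2}e^{tg(y)}\int_0^\infty e^{-tu^2/(2\sqrt y)}\,\d u\le\Con\,t^{-7/6}q^{-2}y^{1/4}e^{tg(y)}$, and using $y<q^2/4$ to bound $q^{-2}y^{1/4}\le\Con q^{-3/2}\le\Con q_0^{-3/2}$, this is $\le\Con(t_0,q_0)\,t^{-5/6}e^{tg(y)}$. Adding the three pieces yields \eqref{ap-int2-eq}.

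I expect the main obstacle to be the intermediate regime $q^2/16\le y<q^2/4$, where $\sqrt y$ is bounded away from $0$ but may be arbitrarily close to $q/2$, so that the drift $g'(y)=q-2\sqrt y$ degenerates at the right endpoint $r=y$ (which is exactly where the mass of $\int_0^y e^{tg(r)}\,\d r$ concentrates). A naive Laplace estimate there produces only a $t^{-2/3}$ prefactor, weaker than the required $t^{-5/6}$; the resolution is to retain the kernel factor $(1+t^{2/3}r)^{-1}$, which contributes an extra $t^{-1/3}$ precisely because $r\gtrsim q^2$ on this interval, and to pair it with the quadratic lower bound $g(y)-g(r)\ge(y-r)^2/(2\sqrt y)$. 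The only other mildly delicate point is verifying that all the $q$-dependence is absorbed into $\Con(t_0,q_0)$, which uses $q\ge q_0$ together with $y<q^2/4$ in the last estimate.
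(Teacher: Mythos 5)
Your proof is correct, and it takes a route that differs from the paper's in a substantive way. Both proofs dispatch $y\ge q^2/4$ by releasing the integral to $\R$ and invoking Lemma~\ref{ap-int1}, and both bound the $\int_{-\infty}^0$ piece via \eqref{u-low}. For $0\le y<q^2/4$ the paper applies AM--GM up front to get $\Con\,t^{-1/3}\int_0^{\sqrt y}e^{t\expf_q(x)}\,\d x$ and then splits into the two cases $y\le q^2/16$ and $q^2/16\le y\le q^2/4$, bounding the gap $\expf_q(\sqrt y)-\expf_q(x)$ in the $x$-variable. You instead keep the kernel factor $(1+t^{2/3}r)^{-1}$ and work in the $r$-variable with $g(r)=\expf_q(\sqrt r)$, splitting $[0,y]$ at $r_*=\min\{y,q^2/16\}$. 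This buys something real: on $[0,q^2/16]$ the derivative $g'(r)=q-2\sqrt r$ is bounded below by $q/2$, so the decay of $e^{tg(r)}$ away from $r_*$ is uniform in $r$, and the first piece is handled by a clean one-line exponential estimate. By contrast, in the $x$-variable $\expf_q'(x)=2x(q-2x)$ degenerates as $x\to 0$, and in fact the paper's intermediate display \eqref{roll} bounds $\expf_q(\sqrt y)-\expf_q(x)$ from \emph{above} by $\tfrac{q^2}{4}(\sqrt y-x)$, which is the wrong direction for deducing the stated upper bound on $\int_0^{\sqrt y}e^{t\expf_q(x)}\,\d x$ in \eqref{e.case1}; your $r$-variable lower bound $g(r_*)-g(r)\ge\tfrac q2(r_*-r)$ is exactly what that step actually needs. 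Your second piece (quadratic decay $g(y)-g(r)\ge(y-r)^2/(2\sqrt y)$ combined with the retained kernel factor giving $t^{-2/3}q^{-2}$) plays the same structural role as the paper's Case~2 quadratic bound \eqref{roll.} but is organized differently, using $y<q^2/4$ to keep the $q$-dependence and the prefactor $y^{1/4}$ absorbable into $\Con(t_0,q_0)$. In short, your decomposition is a valid alternative and arguably more robust than the paper's Case~1 argument as written.
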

\begin{remark}
The prefactor $ t^{-5/6} $ in \eqref{ap-int-eq} is likely suboptimal, but suffices for our subsequent analysis.
\end{remark}
\begin{proof}
\revi{When $y\in [\frac{q^2}{4},\infty]$, we release the range of integration of the l.h.s.\ of \eqref{ap-int2-eq} to $\R$ and use the upper bound in Lemma~\ref{ap-int1}. Observe that $U_q\left(\min\{\sqrt{y},\tfrac{q}{2}\}\right)=\frac{q^3}{12}$ and $t$ and $q$ are bounded below by $t_0$ and $q_0$. Absorb $t^{-1/3}$ and $q^{-3/2}$ in the constant $C(t_0,q_0)$ to get the desired bound in \eqref{ap-int2-eq}.}

\revi{Moving onto $ y\in[0,q^2/4) $},
from \eqref{i1} we already have a bound on $ \int_{-\infty}^{0} e^{qrt}\aisq ( t^{2/3}r)\d r $ of the desired form.
Hence, it suffices to bound for $ \int_{0}^y e^{qrt}\aisq ( t^{2/3}r)\d r $. 
From $\eqref{both-sides}$, make a change of variable $\sqrt{r} \mapsto x$, and in the result bound $\frac{2x}{1+t^{2/3}x^2}\le t^{-1/3}$.
We have
\begin{align}\label{exp}
	\int_{0}^{y} e^{qrt}\aisq ( t^{2/3}r)\d r
	\le 
	\Con \int_{0}^{y} \frac{e^{t\expf_q(\sqrt{r})}}{1+t^{2/3}r}\d r=\Con \int_{0}^{\sqrt{y}} \frac{2xe^{t\expf_q(x)}}{1+t^{2/3}x^2}\d x \le \Con t^{-1/3} \int_{0}^{\sqrt{y}} e^{t\expf_q(x)}\d x.
\end{align}
We next bound the last expression in \eqref{exp} in two cases.

\smallskip

\noindent\textbf{Case 1. $0\le y\le \frac{q^2}{16}$.}  
Since $\expf_q''(x)=2q-8x$ is positive for $x\in [0,\frac{q}{4}) $, the derivative $\expf_q'(x)=2x(q-2x)$ is increasing in $x\in [0,\frac{q}{4}]$. 
\revi{Hence, for any $z\in [0,\frac{q}{4}]$, $\expf_q'(z) \le \expf_q'(\tfrac{q}{4})=\frac{q^2}{4}$. Thus, for any $x\in [0,\sqrt{y}]$, we have $z_* \in [x,\sqrt{y}]$ for which
\begin{align}\label{roll}
	\expf_q(\sqrt{y})-\expf_q(x)=\expf_q'(z_*)(\sqrt{y}-x) \le \expf_q'(\tfrac{q}{4})(\sqrt{y}-x)=\tfrac{q^2}{4}(\sqrt{y}-x).
\end{align}}
Using~\eqref{roll} to bound $ \exp(t \expf_q(x)) $ and integrating the result over $ x\in[0,\sqrt{y}] $ gives
\begin{align}
\label{e.case1}
\int_{0}^{\sqrt{y}} e^{t\expf_q(x)}\d x \le \int_0^{\sqrt{y}}e^{t\expf_q(\sqrt{y})-\frac{1}4q^2t(\sqrt{y}-x)}\d x \le \int_{-\infty}^{\sqrt{y}}e^{t\expf_q(\sqrt{y})-\frac{1}4q^2t(\sqrt{y}-x)}\d x 
\le \frac4{q^2t}e^{t\expf_q(\sqrt{y})}.
\end{align}

\smallskip

\noindent\textbf{Case 2. $\frac{q^2}{16} \le y\le \frac{q^2}{4}$.} In this case we have $q\ge 2\sqrt{y}$, which gives
\begin{align*}
	\expf_q(\sqrt{y})-\expf_q(x)
	=
	q(y-x^2)-\tfrac43(y^{3/2}-x^{3}) 
	\ge 2\sqrt{y}(y-x^{2})-\tfrac43(y^{3/2}-x^{3})
	=
	\tfrac23(\sqrt{y}-x)^2(\sqrt{y}+2x).
\end{align*}
In the last expression, further use $ \sqrt{y}+2x \ge \sqrt{y}\ge \frac{q}{4} $ to get
\begin{align}\label{roll.}
	\expf_q(\sqrt{y})-\expf_q(x) \ge \tfrac{q}6(\sqrt{y}-x)^2.
\end{align}
Using~\eqref{roll.} to bound $ \exp(t \expf_q(x)) $ and integrate the result over $ x\in[0,\sqrt{y}] $ gives
\begin{align}
\label{e.case2}
\int_{0}^{\sqrt{y}} e^{t\expf_q(x)}\d x \le \int_0^{\sqrt{y}}e^{t\expf_q(\sqrt{y})-\frac{1}6qt(\sqrt{y}-x)^2}\d x \le \int_{-\infty}^{\sqrt{y}}e^{t\expf_q(\sqrt{y})-\frac{1}6qt(\sqrt{y}-x)^2}\d x 
\le \sqrt{\frac{\Con}{qt}}e^{t\expf_q(\sqrt{y})}.
\end{align}

Combining \eqref{e.case1} and \eqref{e.case2} and inserting the bounds into \eqref{exp} gives the desired result.
\end{proof}

\section{Estimates for the leading term} \label{sec.trace}
The goal of this section is to obtain the $ t\to\infty $ asymptotics of $ \til\calA_p(t) $ defined in~\eqref{e.calA}, 
accurate up to constant multiples.

Let us first settle the differentiability in $ s $ of the operator $ K_{s,t} $, defined in~\eqref{e.K}. Recall $K_{s,t}(x,y)$ from \eqref{e.K}, then perform a change of variable $r\mapsto t^{2/3}r$ to get
\begin{align}
\label{e.K.}
K_{s,t}(x,y) & =t^{2/3}\int_{\R} \ai(x+t^{2/3}r)\ai(y+t^{2/3}r)v(s,t,r)\d r,
\\
\label{e.v}
v(s,t,r) &:= \frac{1}{1+\frac{1}{s}e^{-rt}}.
\end{align} 
Formally differentiating the kernel $ K_{s,t}(x,y) $ in~\eqref{e.K} in $ s $ suggests that the $ n $-th derivative of $ K_{s,t} $ should have kernel
\begin{align}
	\label{e.Kn}
	K^{(n)}_{s,t}(x,y) 
	&:=
	t^{2/3}\int_{\R} \ai(x+t^{2/3}r) \ai(y+t^{2/3}r) \partial^n_s v(s,t,r) \, \d r,
\end{align}
with the convention $ K^{(0)}_{s,t}(x,y) := K_{s,t}(x,y) $. Differentiating \eqref{e.v} with respect to $s$ we get 
\begin{align}
\label{e.partial.v}
\partial^n_s v(s,t,r)=\frac{(-1)^{n-1}n!e^{-rt}}{(s+e^{-rt})^{n+1}}.
\end{align}
Since $ (-1)^{n-1}\partial^n_s v(s,t,r)>0 $, Lemma~\ref{l.sqtrick} applied with $\revi{ J(r,y)= \ai(y+t^{2/3}r)((-1)^{n-1}\partial^n_s v(s,t,r))^{1/2} }$ gives that $ (-1)^{n-1} K^{(n)}_{s,t} $ defines a positive trace-class operator on $ L^2(\R_{\geq 0}) $.

\begin{lemma} \label{l.K.diff}
For any $ n \in \Z_{\geq 0} $, $ u\in[1,\infty] $ and $ t>0 $, the operator $ K^{(n)}_{s,t} $ is differentiable in $ s $ at each $ s>0 $ in the $ u $-th Schatten norm,
with derivative being equal to $ K^{(n+1)}_{\revi{s,t}} $, i.e.,
\begin{align*}
	\lim_{s'\to s}\Big\Vert \frac{K^{(n)}_{s',t}-K^{(n)}_{s,t}}{s'-s} - K^{(n+1)}_{\revi{s,t}} \Big\Vert_u = 0.
\end{align*}
\end{lemma}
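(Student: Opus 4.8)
The plan is to reduce the Schatten-norm differentiability of $K^{(n)}_{s,t}$ to a pointwise statement about the scalar functions $\partial_s^n v(s,t,r)$ and then feed it through the square-root trick, Lemma~\ref{l.sqtrick}. Since all the operators in question are of the form $T(x,y)=\int_\R J(r,x)J(r,y)\,\d r$ with $J(r,y)=\ai(y+t^{2/3}r)w(r)^{1/2}$ for a suitable nonnegative weight $w$, Lemma~\ref{l.sqtrick} gives $\tr(T)=\norm{T}_1=t^{-2/3}\int_\R w(r)\,\d r$ once we undo the change of variables (the $\int_{\R_+}\ai(y+t^{2/3}r)^2\,\d y$ integral equals $t^{-2/3}\aisq(t^{2/3}\,\text{(shift)})$, and integrating that against $w$ converges by the bounds \eqref{u-low}, \eqref{u-up} as long as $w$ decays like $e^{-crt}$ at $+\infty$ and grows at most polynomially at $-\infty$). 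In particular, because the trace norm dominates all other Schatten norms, it suffices to prove the convergence in the statement with $u=1$.

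First I would form the difference quotient. For fixed $s>0$ and $s'$ near $s$, the operator $\frac{1}{s'-s}(K^{(n)}_{s',t}-K^{(n)}_{s,t})-K^{(n+1)}_{s,t}$ is again an integral operator of the square-root-trick form, with scalar weight
\[
	w_{s',s}(r) := (-1)^n\Big( \frac{\partial_s^n v(s',t,r)-\partial_s^n v(s,t,r)}{s'-s} - \partial_s^{n+1}v(s,t,r) \Big).
\]
The sign factor $(-1)^n$ is chosen so that, by the explicit formula \eqref{e.partial.v} and the convexity/monotonicity of $s\mapsto \partial_s^n v(s,t,r)$ in the relevant sign, $w_{s',s}(r)$ is nonnegative once $s'$ is confined to a small neighborhood $[s/2, 2s]$ of $s$ — more precisely, $(-1)^n\partial_s^{n+1}v = \tfrac{(n+1)!\,e^{-rt}}{(s+e^{-rt})^{n+2}} \ge 0$ and $(-1)^n\partial_s^n v(\cdot,t,r)$ is a convex function of $s$, so the divided difference minus the derivative is nonnegative. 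Hence Lemma~\ref{l.sqtrick} applies directly to bound the trace norm of the difference operator by $t^{-2/3}\int_\R |\cdot|$ against $\aisq$, i.e.\ (after the change of variables) by
\[
	\Con\int_\R w_{s',s}(r)\,\aisq(t^{2/3}r)\,\d r
\]
up to the $x,y$-shift which only helps decay. I would then send $s'\to s$ and apply dominated convergence: $w_{s',s}(r)\to 0$ pointwise in $r$ by Taylor's theorem (the divided difference of the smooth function $s\mapsto\partial_s^n v(s,t,r)$ converges to its derivative), and a uniform dominating function is available because for $s'\in[s/2,2s]$ one has, from \eqref{e.partial.v}, an estimate $w_{s',s}(r)\le \Con_{s,t,n}\, e^{-rt}(s/2+e^{-rt})^{-(n+2)}\le \Con'_{s,t,n}\,(1+e^{-rt})$, which is integrable against $\aisq(t^{2/3}r)$ by \eqref{u-low}--\eqref{u-up} (exponential decay at $r\to+\infty$, at most $\sqrt{|r|}$ growth at $r\to-\infty$ beaten by $e^{-c|r|t}$ — in fact even the crude bound $w\le\Con(1+e^{-rt})$ against $\aisq(t^{2/3}r)$ converges since $\aisq(t^{2/3}r)\le\Con e^{-\frac43 t r^{3/2}}$ for $r>0$ and $\le\Con(1+t^{1/3}\sqrt{|r|})$ for $r<0$, against which $e^{-rt}$ is summable).

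The only genuinely delicate point is the interchange of $x,y$ with the weight: one must check that the putative derivative kernel $K^{(n+1)}_{s,t}(x,y)$ is actually the Schatten-norm limit and not merely a formal object, which is exactly what the square-root-trick packaging guarantees, since it expresses everything as the $L^2$-norm of an explicit vector-valued function $r\mapsto J(r,\cdot)$ and reduces operator convergence to convergence of a scalar integral. The main obstacle, then, is not conceptual but bookkeeping: verifying the sign of $w_{s',s}$ so that Lemma~\ref{l.sqtrick} is literally applicable (rather than a triangle-inequality surrogate), and producing the uniform-in-$s'$ integrable dominating function from \eqref{e.partial.v}; both are routine given the explicit rational-exponential form of $\partial_s^n v$. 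Once dominated convergence closes the $u=1$ case, the general $u\in[1,\infty]$ case follows immediately from $\norm{\cdot}_u\le\norm{\cdot}_1$.
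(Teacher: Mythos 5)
Your overall strategy is the same as the paper's: reduce to $u=1$, write the difference-quotient operator in square-root-trick form so that Lemma~\ref{l.sqtrick} converts operator trace-norm convergence into convergence of a scalar integral in $r$, and close via dominated convergence. The paper gets a fixed-sign integrand by invoking the Taylor integral remainder $\int_s^{s'}(s'-\sigma)\partial^{n+2}_\sigma v\,\d\sigma$ and applying the square-root trick over the product space $(\sigma,r)$, whereas you try to avoid the extra $\sigma$-integral by invoking convexity of $(-1)^n\partial_s^n v$ in $s$. Both routes are legitimate, but you have two sign errors.

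First, the convexity claim is reversed. From~\eqref{e.partial.v}, $(-1)^n\partial_s^n v(s,t,r) = -\,n!\,e^{-rt}(s+e^{-rt})^{-(n+1)}$, whose second $s$-derivative is $(-1)^n\partial_s^{n+2}v = -\,(n+2)!\,e^{-rt}(s+e^{-rt})^{-(n+3)}<0$; so this function is \emph{concave}, not convex. Consequently $w_{s',s}(r)$ is \emph{not} nonnegative as you state: the sign flips with $\sgn(s'-s)$ (nonnegative for $s'<s$, nonpositive for $s'>s$). This is fixable --- the sign is fixed once $s'$ is fixed, so you can absorb a $\sgn(s-s')$ factor into the weight before applying Lemma~\ref{l.sqtrick} --- but as written the application is not valid, and you should state the corrected sign.

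Second, and more seriously, your dominating-function bound contains a sign mistake that makes the majorant non-integrable. From the mean-value form one gets $|w_{s',s}(r)|\le \Con_{n,s}\,e^{-rt}(s/2+e^{-rt})^{-(n+2)}$, which is fine; but your further weakening to $\Con(1+e^{-rt})$ and the claim that this is integrable against $\aisq(t^{2/3}r)$ because ``$\sqrt{|r|}$ growth at $r\to-\infty$ is beaten by $e^{-c|r|t}$'' is wrong: for $r\to-\infty$, $e^{-rt}=e^{|r|t}\to\infty$, so $(1+e^{-rt})\aisq(t^{2/3}r)\sim e^{|r|t}\sqrt{|r|}$ is not integrable. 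You must keep the denominator: for $r\to-\infty$ one has $e^{-rt}(s/2+e^{-rt})^{-(n+2)}\sim e^{(n+1)rt}\to 0$ exponentially, which beats the $\sqrt{|r|}$ growth of $\aisq$; equivalently you may bound $(s/2+e^{-rt})^{n+2}\ge e^{-(n+2)rt}$ to get the majorant $\Con e^{(n+1)rt}$, and then $\int_\R e^{(n+1)rt}\aisq(t^{2/3}r)\,\d r<\infty$ by Lemma~\ref{ap-int} (this is essentially the paper's estimate). With these two local fixes the argument closes and is a slightly leaner version of the paper's proof.
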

\begin{proof}
Since the Schatten norms decreases in $ u $, without lost of generality we assume $ u=1 $.
Fix $ n \in \Z_{\geq 0} $ and $ t>0 $, and set
$ 
	D_{s,s'} := \frac{1}{s'-s}(K^{(n)}_{s',t}-K^{(n)}_{s,t}) - K^{(n+1)}_{\revi{s,t}}. 
$
Use~\eqref{e.Kn} to express the kernel of $ D_{s,s'} $ as an integral involving $ \partial^n_{s} v $ and $ \partial^{n+1}_{s} v $,
and Taylor expand $ \partial^{n}_{\sigma} v(\sigma,t,r) $ around $ \sigma=s $ up to the first order, i.e.,
$
	\partial^{n}_{\sigma} v(s',r) - \partial^{n}_{\sigma} v(s,t,r) - (s'-s) \partial^{n+1}_{\sigma} v(s,t,r)
	=
	\frac12\int_{s}^{s'} (s'-\sigma) \partial^{n+2}_{\sigma} v(\sigma,t,r) \,\d\sigma.
$
We then get
\begin{align*}
	D_{s,s'}(x,y)
	&=
	t^{2/3}\int_{\R } \ai(x+t^{2/3}r) \ai(y+t^{2/3}r) \Big(  \frac{1}{2(s'-s)}\int_{s}^{s'} (s'-\sigma)\partial^{n+2}_\sigma v(\sigma,t,r) \,\d \sigma \Big) \d r
\\
	&=
	t^{2/3}\int_{\R_{\geq 0}\times\R } \ai(x+t^{2/3}r) \ai(y+t^{2/3}r) \sgn(s'-s) \ind_{|(s,s')|}(\sigma)  \frac{(s'-\sigma)}{2(s'-s)}\partial^{n+2}_\sigma v(\sigma,t,r) \, \d \sigma \d r,
\end{align*}
where $ |(s,s')| := (s,s') $ for $ s<s' $ and $ |(s,s')| := (s',s) $ for $ s'<s $.

Our goal is to show that $ \norm{D_{s,s'}}_1 $ converges to zero as $ s'\to s $.
As seen from \eqref{e.partial.v}, we have $ (-1)^{n+1}\partial^{n+2}_\s v(\s,t,r) >0 $.
Applying Lemma~\ref{l.sqtrick} with $ \revi{ J(r,y)=\ai(y+t^{2/3}r)((-1)^{n+1}\partial^{n+2}_\s v(\s,t,r))^{-1/2} }$ gives
\begin{align}
	\label{e.Ks.diff.1}
	\Vert D_{s,s'}\Vert_1 
	= 
	t^{2/3}\int_{\R^2_{\ge 0}\times\R } \ai^2(x+t^{2/3}r) \ind_{|(s,s')|}(\s) \Big| \frac{(s'-\s)}{2(s'-s)}\partial^{n+2}_\sigma v(\sigma,t,r) \Big| \d \s \d x \d r,
\end{align}
provided that the last integral converges.
To check the convergence, recognizing $\int_{\R_{\ge 0}} \ai(x+t^{2/3}r)^2\d x=\aisq(t^{2/3}r)$ substituting \eqref{e.partial.v} into \eqref{e.Ks.diff.1}, bound $(\revi{\s+e^{-rt}})^{n+3} \ge e^{-(n+3)rt}$, and $ |\frac{s'-\sigma}{2(s'-s)}| \leq \frac12 $, 
\begin{align}
	\label{e.Ks.diff.2}
	(\text{r.h.s.\ of } \eqref{e.Ks.diff.1})
	\leq
	\frac{1}{2}\revi{(n+2)!}t^{2/3} \int_{\R_{\ge 0}\times\R} e^{\revi{(n+2)rt}}\aisq(t^{2/3}r) \ind_{|(s,s')|}(\s) | \d \s \d r.
\end{align}
By Lemma \ref{ap-int} with $y\mapsto \infty$, the r.h.s.\ of~\eqref{e.Ks.diff.2} is finite for each $ s'\in\R_{\geq 0} $.
From this and the dominated convergence theorem, we conclude the desired result
$
	\Vert D_{s,s'} \Vert_1
	\leq
	(\text{r.h.s.\ of } \eqref{e.Ks.diff.2})
	\to
	0,
$
as $ s'\to s $.
\end{proof}

Applying Lemma~\ref{l.K.diff} with $ u=1 $ gives $ \partial^n_s \tr(K_{s,t})=\tr(K_{s,t}^{(n)}) $.
Further, since the operator $ K_{s,t}^{(n)} $ has a continuous kernel given in~\eqref{e.Kn} \revi{and is a trace-class operator}, the trace can be written as $ \tr(K_{s,t}^{(n)}) = \int_0^\infty K_{s,t}^{(n)}(x,x)\, \d x $ (\revi{see Corollary 3.2 in \cite{brislawn1991traceable}}).
To evaluate the last integral, insert \eqref{e.v} into \eqref{e.K.} and \eqref{e.partial.v}  into \eqref{e.Kn} to get
\begin{align}
	\label{e.K.int}
	\tr(K_{s,t})
	&=
	t^{2/3}\int_{\R} \frac{1}{1+\frac{1}{s}e^{-rt}} \aisq(t^{2/3}r) \, \d r,
\\
	\label{e.Kn.int}
	\partial^n_s \tr(K_{s,t})
	&=
	\tr(K^{(n)}_{s,t})
	=
	t^{2/3} \int_{\R} \frac{(-1)^{n-1} \, n! \,e^{-rt}}{(s+e^{-rt})^{n+1}} \aisq(t^{2/3}r) \, \d r,
	\quad
	n\in\Z_{> 0},
\end{align} 
where $ \aisq(y) $ is defined in~\eqref{e.aisq}.
Armed with the expressions \eqref{e.K.int} and \eqref{e.Kn.int}, we now proceed to establish the desired asymptotics of $ \til\calA_{p}(t) $.
Recall from~\eqref{e.calA} $ \til\calA_{p}(t) $ involves an integral over $ s\in[0,1] $.
It is convention to write it as the difference of an integral over $ s\in[0,\infty) $ and over $ s\in[0,1] $:
\begin{align}
	\label{e.calA.decomp}
	\til{\calA}_{p}(t) &= \calA_{p}(t) - \hat{\calA}_{p}(t),
	\\
	\label{e.calA.}
	\calA_{p}(t)
	&:= \frac{(-1)^{n+1}}{\Gamma(1-\alpha)}\int_0^\infty s^{-\alpha}\,\partial^n_s\, \tr(K_{s,t}) \, \d s,
	\quad
	\hat\calA_{p}(t)
	:= \frac{(-1)^{n+1}}{\Gamma(1-\alpha)}\int_1^\infty s^{-\alpha}\,\partial^n_s\, \tr(K_{s,t}) \, \d s,
\end{align}
where $ n:=\lfloor p\rfloor+1\in\Z_{> 0} $ and $ \alpha:=p-\lfloor p\rfloor\in [0,1) $.
\begin{proposition} \label{trace-estimate} 
Fix any $ t_0,p_0>0 $. 
There exists $ \Con=\Con(t_0,p_0)>0 $ such that for all $t\ge t_0$ and $p\ge p_0$,
\begin{align}
	\label{e.calA.bd}
	\tfrac1{\Con} p^{-\frac32}\Gamma(p+1)\,t^{-\frac12}\, e^{\frac{p^3t}{12}}
	\leq
	&\calA_p(t) \leq \Con p^{-\frac32}\Gamma(p+1)\,t^{-\frac12}\, e^{\frac{p^3t}{12}},
\\
	\label{e.calAh.bd}
	&|\hat{\calA}_p(t)| \leq \Gamma(p+1)\Con.
\end{align}
\end{proposition}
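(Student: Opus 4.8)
The plan is to analyze the two integrals in \eqref{e.calA.} directly using the closed-form trace expression \eqref{e.Kn.int} and the Airy-integral estimates of Lemmas~\ref{ap-int1} and \ref{ap-int}. For $\calA_p(t)$, I would start from
\[
	\calA_p(t) = \frac{n!}{\Gamma(1-\alpha)}\, t^{2/3}\int_0^\infty s^{-\alpha}\Big(\int_\R \frac{e^{-rt}}{(s+e^{-rt})^{n+1}}\,\aisq(t^{2/3}r)\,\d r\Big)\d s,
\]
noting the sign $(-1)^{n+1}\cdot(-1)^{n-1}=1$ makes the integrand nonnegative, so Tonelli applies and the order of integration can be swapped freely. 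The key is to do the $s$-integral first: for fixed $r$, substitute $s = e^{-rt}\sigma$ to get $\int_0^\infty s^{-\alpha}(s+e^{-rt})^{-(n+1)}\,\d s = e^{-rt(\alpha+n)}\,e^{rt\cdot n}\cdot\!\int_0^\infty \sigma^{-\alpha}(1+\sigma)^{-(n+1)}\,\d\sigma$. Wait — more carefully, $s^{-\alpha}\,\d s = e^{-rt(1-\alpha)}\sigma^{-\alpha}\,\d\sigma$ and $(s+e^{-rt})^{-(n+1)} = e^{rt(n+1)}(1+\sigma)^{-(n+1)}$, so the product of the $e^{-rt}$ prefactor and these gives $e^{-rt}\cdot e^{-rt(1-\alpha)}\cdot e^{rt(n+1)} = e^{rt(n-1+\alpha)} = e^{prt}$. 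The remaining $\sigma$-integral is the Beta integral $B(1-\alpha, n+\alpha) = \Gamma(1-\alpha)\Gamma(n+\alpha)/\Gamma(n+1) = \Gamma(1-\alpha)\Gamma(p+1)/n!$. Hence
\[
	\calA_p(t) = \Gamma(p+1)\, t^{2/3}\int_\R e^{prt}\,\aisq(t^{2/3}r)\,\d r,
\]
and Lemma~\ref{ap-int1} with $q=p$ (legal since $p\ge p_0$) gives $t^{2/3}\int_\R e^{prt}\aisq(t^{2/3}r)\,\d r \sim t^{-1/2}p^{-3/2}e^{p^3t/12}$ with constants depending only on $t_0,p_0$. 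This yields \eqref{e.calA.bd} directly.

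For the bound \eqref{e.calAh.bd} on $\hat\calA_p(t)$, I would again use \eqref{e.Kn.int} and Tonelli to write $|\hat\calA_p(t)| = \frac{n!}{\Gamma(1-\alpha)}t^{2/3}\int_1^\infty s^{-\alpha}(\int_\R e^{-rt}(s+e^{-rt})^{-(n+1)}\aisq(t^{2/3}r)\,\d r)\,\d s$. Since $s\ge 1$, bound $(s+e^{-rt})^{-(n+1)} \le (s+e^{-rt})^{-2}\le$ ... actually the cleanest route: for $s\ge 1$ one has $s^{-\alpha}\le 1$ and $(s+e^{-rt})^{-(n+1)}\le (1+e^{-rt})^{-(n+1)}\cdot$ — hmm, that loses the $s$-decay needed for integrability. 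Better: for $s \ge 1$, write $(s+e^{-rt})^{-(n+1)} \le s^{-2}(s+e^{-rt})^{-(n-1)} \le s^{-2}(e^{-rt})^{-(n-1)}$ when $r\le 0$ and $\le s^{-2}$ when $r\ge 0$; combined with $e^{-rt}$ this is integrable in $r$ against $\aisq(t^{2/3}r)$ by the Airy tail bounds \eqref{u-low}, \eqref{u-up} (the negative-$r$ growth is only polynomial in $\sqrt{|r|}$ and is killed by $e^{(n-1)|r|t}\cdot e^{-rt}$... wait, for $r<0$, $e^{-rt}e^{-rt(n-1)\cdot(-1)}$ — I need to recheck signs, but morally $e^{-rt}(e^{-rt})^{-(n-1)} = e^{-rt(2-n)}$ which for $n\ge 2$ and $r<0$ is $e^{rt(n-2)}$, decaying; for $n=1$ it is $e^{-rt}$ growing, but then Lemma~\ref{ap-int} already handles $\int_\R e^{rt}\aisq$). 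So after the $s$-integral $\int_1^\infty s^{-\alpha-2}\,\d s = (1+\alpha)^{-1}\le 1$, one obtains $|\hat\calA_p(t)| \le \frac{n!}{\Gamma(1-\alpha)}\cdot\Con(t_0)\cdot(\text{something involving }n)$; I can absorb $n!/\Gamma(1-\alpha)$ and a harmless $n$-dependent polynomial factor into $\Gamma(p+1)\Con(t_0,p_0)$ using $n!\le \Con\,\Gamma(p+1)\cdot n \le \Con\,\Gamma(p+1)$ after further absorbing $n\le\Con(p_0)\Gamma(p+1)/\Gamma(p)\cdots$ — concretely $n = \lfloor p\rfloor+1 \le p+1$ and $n! = \Gamma(n+1)\le \Gamma(p+2)= (p+1)\Gamma(p+1)\le \Con\,p\,\Gamma(p+1)$, and $p^{\mathcal O(1)}e^{0}$ is not present so one must be slightly more careful — but since $\hat\calA_p$ has no $e^{p^3t/12}$ growth, any polynomial-in-$p$ loss is acceptable as long as it is $\le \Gamma(p+1)\Con$; using $p^k\le \Con_k\,\Gamma(p+1)$ for $p\ge p_0$ this is fine.

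The main obstacle I anticipate is the $\hat\calA_p(t)$ estimate: the integrand for $r\to-\infty$ combines the polynomial growth of $\aisq(t^{2/3}r)$ with the factor $e^{-rt}/(s+e^{-rt})^{n+1}$, and naively pulling $s\ge 1$ out front destroys the decay in $r$ coming from $s+e^{-rt}\ge e^{-rt}$. The fix is to split the $r$-integral at $r=0$ and, on $r<0$, keep enough powers of $e^{-rt}$ in the denominator (i.e. bound $(s+e^{-rt})^{-(n+1)}\le e^{-(n+1)rt}$, which for $r<0$ is $e^{(n+1)|r|t}$ — this goes the wrong way, so instead bound $(s+e^{-rt})^{-(n+1)}\le 1$ for $r\le 0$ since then $s+e^{-rt}\ge 1$; then $\int_{-\infty}^0 e^{-rt}\aisq(t^{2/3}r)\,\d r = \int_0^\infty e^{rt}\aisq(-t^{2/3}r)\,\d r$ diverges — so the correct move is to \emph{not} drop $s$: on $r\le 0$ use $(s+e^{-rt})^{-(n+1)}\le s^{-(n+1)}$, giving $\int_1^\infty s^{-\alpha-(n+1)}\,\d s\cdot\int_{-\infty}^0 e^{-rt}\aisq(t^{2/3}r)\,\d r$, and the $r$-integral equals $\int_0^\infty e^{rt}\aisq(-t^{2/3}r)\,\d r$ which \emph{still} diverges because $\aisq(-t^{2/3}r)\sim t^{1/3}\sqrt{r}$). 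The genuinely correct estimate is to write $s^{-\alpha}(s+e^{-rt})^{-(n+1)} \le s^{-\alpha}(s+e^{-rt})^{-2}\cdot(s+e^{-rt})^{-(n-1)}$, use $(s+e^{-rt})^{-(n-1)}\le \max(s,e^{-rt})^{-(n-1)}$ splitting on the sign of $r$, and on $r\le 0$ use $(s+e^{-rt})^{-2}\le (e^{-rt})^{-2}=e^{2rt}$ so the $e^{-rt}$ becomes $e^{rt}$ in the numerator and one applies Lemma~\ref{ap-int} with $q=1$ — this is exactly the shape the paper has set up Lemma~\ref{ap-int} to handle. Once the $r$-integral is controlled by $\Con(t_0)$ uniformly, the $s$-integral $\int_1^\infty s^{-\alpha-2}\,\d s\le 1$ closes it, and the combinatorial prefactor $n!/\Gamma(1-\alpha)$ is absorbed into $\Gamma(p+1)\Con(t_0,p_0)$ via $n!\le\Con(p_0)\Gamma(p+1)\cdot p$ and $p\le\Con(p_0)\Gamma(p+1)$ for $p\ge p_0$.
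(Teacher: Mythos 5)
Your treatment of $\calA_p(t)$ is correct and is essentially the same as the paper's: swapping the order of integration and evaluating the inner $s$-integral as a Beta integral yields $\calA_p(t)=\Gamma(p+1)\,t^{2/3}\int_\R e^{prt}\aisq(t^{2/3}r)\,\d r$, and Lemma~\ref{ap-int1} with $q\mapsto p$ finishes.

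The $\hat\calA_p(t)$ estimate has a genuine gap. Your final plan on the $r\le 0$ half fixes the split $(s+e^{-rt})^{-(n+1)}\le(s+e^{-rt})^{-2}(s+e^{-rt})^{-(n-1)}$, then bounds $(s+e^{-rt})^{-2}\le e^{2rt}$ and $(s+e^{-rt})^{-(n-1)}\le\max(s,e^{-rt})^{-(n-1)}$. When $n=1$, i.e.\ $p\in(0,1)$, the last factor is trivial and the integrand reduces to $s^{-\alpha}e^{rt}\aisq(t^{2/3}r)$; the $s$-integral $\int_1^\infty s^{-\alpha}\,\d s$ then diverges since $\alpha\in[0,1)$. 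Even for $n\ge 2$, the branch $\max=s$ leaves $\int_1^\infty s^{-\alpha-n+1}\,\d s$, divergent whenever $n+\alpha\le 2$, i.e.\ $p\le 1$; and the branch $\max=e^{-rt}$ discards all $s$-decay. One \emph{could} salvage this style of argument by allocating the $(n+1)$ powers as $(s+e^{-rt})^{n+1}\ge s^\beta(e^{-rt})^{n+1-\beta}$ with $\beta=n-p_0/2$ rather than $\beta=n-1$: then $\int_1^\infty s^{-\alpha-\beta}\,\d s = (p-p_0/2)^{-1}\le\Con(p_0)$ and the $r$-integral carries $e^{(p_0/2)rt}$, controllable by Lemma~\ref{ap-int} with $q=p_0/2$. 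But that requires choosing the split adaptively in $p_0$, which is not what you wrote; as stated your $s$-integral diverges for $p\le 1$.

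The paper avoids all of this with a move you do not propose: on the $r\le 0$ half it \emph{enlarges} the $s$-integral from $[1,\infty)$ to $[0,\infty)$. This is legitimate because the integrand is nonnegative and only an upper bound is needed, and it restores the exact Beta integral --- producing the clean factor $e^{prt}$ with no residual $s$-dependence. The $r\le 0$ contribution thereby becomes $t^{2/3}\Gamma(p+1)\int_{-\infty}^0 e^{prt}\aisq(t^{2/3}r)\,\d r$, which Lemma~\ref{ap-int} with $q\mapsto p$, $y\mapsto 0$ bounds by $\Con\,\Gamma(p+1)\,t^{-1/6}$ uniformly over $p\ge p_0$. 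The $r\ge 0$ half is then disposed of crudely via $s^{-\alpha}(s+e^{-rt})^{-(n+1)}\le s^{-n-1}$ for $s\ge 1$ and $\aisq(t^{2/3}r)\le\aisq(0)$, essentially as you had. Adopting this decomposition eliminates the casework on $n$ and $p$ entirely and closes the gap.
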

\begin{proof} 
Fix $ t_0,p_0>0 $. To simplify notation, throughout this proof we assume $ t \geq t_0 $ and $ p \geq p_0 $ and write $ \Con=\Con(t_0,p_0) $.
Referring to~\eqref{e.calA} and \eqref{e.Kn.int}, we set
\begin{align}
	\label{e.trace.int}
	\phi_{p,t}(s) := \frac{n!t^{2/3}}{\Gamma(1-\alpha)} s^{-\alpha} \int_{\R} \frac{e^{-rt}\aisq ( t^{2/3}r)}{(s+e^{-rt})^{n+1}} \, \d r
\end{align}
so that $ \calA_p(t) = \revi{\int_0^\infty} \phi_{p,t}(s) \d s $ and $ \hat\calA_p(t) = \int_1^\infty \phi_{p,t}(s) \d s $.

To estimate~$ \calA_p(t) = \revi{\int_0^\infty} \phi_{p,t}(s) \d s $, integrate~\eqref{e.trace.int} over $ s\in[0,\infty) $ to get
\begin{align*}
	\int_{0}^{\infty} \phi_{p,t}(s) \, \d s 
	= 
	\frac{n!t^{2/3}}{\Gamma(1-\alpha)} \int_{\R} e^{-rt}\aisq ( t^{2/3}r) \Big( \int_{0}^{\infty} \frac{s^{-\alpha}\d s}{(s+e^{-rt})^{n+1}} \Big) \d r. 
\end{align*}
The inner integral on the right hand side can be identified with the Beta integral. 
Namely the change of variable $ v=\frac{s}{s+e^{-rt}}$ yields
\begin{align}\label{beta}
	\int_{0}^{\infty} \frac{s^{-\alpha}\d s}{(s+e^{-rt})^{n+1}} = e^{nrt+\alpha rt}\int_{0}^{1} v^{-\alpha}(1-v)^{n-1+\alpha}\d v = e^{nrt+\alpha rt}\frac{\Gamma(1-\alpha)\Gamma(n+\alpha)}{n!}.
\end{align}
This then gives
$
	\int_{0}^{\infty} \phi_{p,t}(s) \, \d s 
	= 
	t^{2/3}\Gamma(p+1)\int_{\R} e^{prt}\aisq ( t^{2/3}r)\d r.
$
The asymptotics of last integral is given by Lemma~\ref{ap-int1} with $q\mapsto p$.
From this we conclude the desired estimate~\eqref{e.calA.bd} of $ \calA_p(t) $.

Next we turn to $ \hat\calA_p(t) = \int_1^\infty \phi_{p,t}(s) \d s $.
Integrate~\eqref{e.trace.int} over $ s\in(1,\infty) $, divide the integral over $ r\in(-\infty,0] $ and $ r\in [0,\infty) $,
and for the former release the integral over $ s $ from $ s\in(1,\infty) $ to $ s\in[0,\infty) $.
This gives
$	
	0 \leq \int_1^\infty \phi_{p,t}(s) \, \d s \leq A_1 + A_2,
$
where
\begin{align*}
	A_1 :=
	\frac{n!t^{2/3}}{\Gamma(1-\alpha)} \int_{[0,\infty)\times(-\infty,0]} s^{-\alpha} \frac{e^{-rt}\aisq ( t^{2/3}r)}{(s+e^{-rt})^{n+1}} \, \d s \d r,
	\quad
	A_2 :=
	\frac{n!t^{2/3}}{\Gamma(1-\alpha)} \int_{(1,\infty)\times[0,\infty)} s^{-\alpha}\frac{e^{-rt}\aisq ( t^{2/3}r)}{(s+e^{-rt})^{n+1}} \, \d s \d r.
\end{align*}
For $ A_1 $ use \eqref{beta} and then the bound from Lemma~\ref{ap-int} with $q\mapsto p$ and $ y\mapsto 0 $.
We have
\begin{align}
\label{e.A1.bd}
	A_1
	= 
	t^{2/3}\Gamma(p+1) \int_{-\infty}^{0} e^{prt}\aisq ( t^{2/3}r)\, \d r \leq t^{-1/6}\Gamma(p+1) \Con.
\end{align}
For $ A_2 $, use $ s \geq 1 $ to bound $ s^{-\alpha} \frac{1}{(s+e^{-rt})^{n+1}} \leq s^{-n-1} $
and use the fact that $ \aisq  $ is decreasing (see~\eqref{e.aisq}) to bound $ \aisq ( t^{2/3}r) \leq \aisq ( 0) = \Con $.
Together with $ \frac{1}{\Gamma(1-\alpha)} \leq 1 $, for $ \alpha \in [0,1) $, we have
\begin{align}
	\label{e.A2.bd}
	A_2
	\leq
	\Con
	\frac{n!t^{2/3}}{\Gamma(1-\alpha)} \int_{1}^\infty s^{-1-n} \, \d s \int_0^\infty e^{-rt} \d r
	\leq
	(n-1)!t^{-1/3} \Con
	\leq
	t^{-1/3}\Gamma(p+1) \Con.	
\end{align}
The last inequality follows from the fact that $ \Gamma(y) $ is increasing for $ y\geq 1 $ to bound $ (n-1)!=\Gamma(n) \leq \Gamma(p+1) $. 
Using $t\geq t_0$ to bound $t^{-1/6},t^{-1/3} \le \Con$, the bounds~\eqref{e.A1.bd} and \eqref{e.A2.bd} together gives the desired bound for \eqref{e.calAh.bd}.
\end{proof}

\section{Bounds for higher order terms} \label{sec.higher}
To goal of this section is to establish bounds on the term $ \calB_{p,L}(t) $ defined in~\eqref{e.calBL}.
Along the way we will also justify passing derivatives into sums in~\eqref{e.derived}.

Recall from~\eqref{e.Kn} and Lemma~\ref{l.K.diff} that $ K^{(n)}_{s,t} $ is the $ n $-th derivative in $ s $ of $ K_{s,t} $.
To prepare for subsequent analysis, we provide bounds on $ \tr(K_{s,t}) $ and $ \tr(K^{(n)}_{s,t}) $. 

\begin{lemma} \label{l.trace} 
Recall $ \expf_q $ from~\eqref{e.expq}.
For any $t_0>0$,  there exists a constant $ \Con(t_0)>0 $ such that for all $\s\in[0,\infty] $, $t>t_0$, and $ n\in\Z_{>0} $,
\begin{align}
	\label{e.tr.bd}
	\big| \tr(K_{e^{-t\s},t}) \big| &\le \Con(t_0)\exp(t\expf_1(\min\{\sqrt{\s},\tfrac12\})-t\s),
\\
	\label{e.trn.bd}
	\big|\tr(K_{e^{-t\s},t}^{(n)})\big| &\le  n! \, \Con(t_0) \,\exp(t\expf_n(\min\{\sqrt{\s},\tfrac{n}{2}\})).
\end{align}
\end{lemma}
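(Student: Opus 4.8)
The plan is to prove both bounds by directly inserting the integral representations \eqref{e.K.int} and \eqref{e.Kn.int} (with $ s = e^{-t\sigma} $) and then invoking Lemma~\ref{ap-int}. Recall $ \tr(K_{s,t}) = t^{2/3}\int_{\R} \frac{\aisq(t^{2/3}r)}{1+\frac1s e^{-rt}}\,\d r $ and $ \tr(K^{(n)}_{s,t}) = t^{2/3}\int_{\R} \frac{(-1)^{n-1}n!\,e^{-rt}}{(s+e^{-rt})^{n+1}}\aisq(t^{2/3}r)\,\d r $.

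\smallskip

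\noindent\textbf{The trace bound \eqref{e.tr.bd}.} With $ s = e^{-t\sigma} $, the weight is $ \frac{1}{1+\frac1s e^{-rt}} = \frac{1}{1+e^{t(\sigma - r)}} $. For $ r \le \sigma $ this is at most $ 1 $; for $ r \ge \sigma $ it is at most $ e^{-t(r-\sigma)} = e^{t\sigma}e^{-rt} $. So I would split the integral at $ r = \sigma $. On $ r \le \sigma $, bound the weight by $ 1 $ and apply Lemma~\ref{ap-int} with $ q \mapsto 1 $ and $ y \mapsto \sigma $ to get $ t^{2/3}\int_{-\infty}^{\sigma} \aisq(t^{2/3}r)\,\d r \le \Con\, t^{-1/6}\exp(t\,\expf_1(\min\{\sqrt{\sigma},\tfrac12\})) $; since $ \sigma \ge 0 $, this is at most $ \Con\exp(t\,\expf_1(\min\{\sqrt{\sigma},\tfrac12\})) $, and I still need the extra decay $ e^{-t\sigma} $. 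To obtain it, note that on $ r \le \sigma $ we in fact have the weight $ \le \min\{1, e^{t(\sigma-r)}\} $, which is wasteful; better is to observe that $ \frac{1}{1+e^{t(\sigma-r)}} \le e^{t(r-\sigma)} $ always, i.e.\ $ \le e^{-t\sigma}e^{rt} $, so globally $ \tr(K_{e^{-t\sigma},t}) \le e^{-t\sigma}\,t^{2/3}\int_{\R} e^{rt}\aisq(t^{2/3}r)\,\d r $. But the cleanest route matching the stated bound is: use the weight bound $ \le 1 $ on $ (-\infty,\sigma] $ \emph{and} $ \le e^{t\sigma}e^{-rt} $ on $ [\sigma,\infty) $, handle the first piece by Lemma~\ref{ap-int} with $ q=1 $, $ y=\sigma $, and the second piece --- which has an extra $ e^{t\sigma}e^{-rt} $ but is also multiplied by $ \aisq(t^{2/3}r) $ decaying like $ e^{-\frac43 t r^{3/2}} $ --- directly; since $ \sigma \ge 0 $ and the integrand on $ [\sigma,\infty) $ decays super-exponentially, this piece contributes at most $ \Con\, e^{t\sigma} e^{-t\sigma}\cdot(\text{small}) $. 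I would then carefully combine to reproduce $ \exp(t\expf_1(\min\{\sqrt\sigma,\tfrac12\}) - t\sigma) $; the key arithmetic fact making this work is that $ \expf_1(\min\{\sqrt\sigma,\tfrac12\}) - \sigma \ge -\tfrac43\sigma^{3/2} $ type comparisons, so the $ e^{-t\sigma} $ factor is the genuine dominant correction for large $ \sigma $.

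\smallskip

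\noindent\textbf{The derivative trace bound \eqref{e.trn.bd}.} Here take absolute values inside \eqref{e.Kn.int}: $ |\tr(K^{(n)}_{e^{-t\sigma},t})| = n!\,t^{2/3}\int_{\R} \frac{e^{-rt}}{(e^{-t\sigma}+e^{-rt})^{n+1}}\aisq(t^{2/3}r)\,\d r $. The idea is that $ \frac{e^{-rt}}{(e^{-t\sigma}+e^{-rt})^{n+1}} \le e^{nrt} $ always (divide numerator and denominator, dropping the $ e^{-t\sigma} $ term in the denominator), which gives $ |\tr(K^{(n)}_{e^{-t\sigma},t})| \le n!\,t^{2/3}\int_{\R} e^{nrt}\aisq(t^{2/3}r)\,\d r $; applying Lemma~\ref{ap-int} with $ q\mapsto n $ and $ y\mapsto \infty $ yields $ \le n!\,\Con\, t^{-1/6}\exp(t\,\expf_n(\tfrac n2)) = n!\,\Con\, t^{-1/6}e^{\frac{n^3 t}{12}} $, and since $ \expf_n(\min\{\sqrt\sigma,\tfrac n2\}) = \expf_n(\tfrac n2) = \tfrac{n^3}{12} $ exactly when $ \sigma \ge \tfrac{n^2}{4} $, this already gives the claim in that regime after absorbing $ t^{-1/6} $ (using $ t \ge t_0 $). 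For $ \sigma < \tfrac{n^2}{4} $ I would instead use the sharper bound $ \frac{e^{-rt}}{(e^{-t\sigma}+e^{-rt})^{n+1}} \le \min\{e^{nrt},\, e^{nt\sigma}e^{-rt}\} $ (the second from dropping $ e^{-rt} $ in the denominator): on $ r \le \sigma $ use $ e^{nrt} $ and Lemma~\ref{ap-int} with $ q=n $, $ y=\sigma $; on $ r \ge \sigma $ the factor $ e^{nt\sigma}e^{-rt}\aisq(t^{2/3}r) $ is handled directly and is subdominant relative to $ e^{t\expf_n(\sqrt\sigma)} $ because $ \expf_n $ is increasing on $ [0,\tfrac n2] $. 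Then Lemma~\ref{ap-int} packages the $ r \le \sigma $ piece as $ \Con\, t^{-5/6}\exp(t\,\expf_n(\min\{\sqrt\sigma,\tfrac n2\})) $ and $ t^{-5/6} \le \Con $.

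\smallskip

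\noindent\textbf{Main obstacle.} The routine part is plugging in and estimating; the delicate point is bookkeeping the two competing exponential factors --- $ \exp(t\,\expf_1(\min\{\sqrt\sigma,\tfrac12\})) $ versus $ e^{-t\sigma} $ in \eqref{e.tr.bd}, and $ \exp(t\,\expf_n(\min\{\sqrt\sigma,\tfrac n2\})) $ versus the $ e^{nt\sigma}e^{-rt} $ tail in \eqref{e.trn.bd} --- so that the split at $ r = \sigma $ genuinely reproduces the claimed exponent rather than a strictly larger one, uniformly in $ \sigma\in[0,\infty] $ and $ n $. In particular, verifying that the tail piece $ r \ge \sigma $ does not spoil the bound requires checking that $ \expf_q(\min\{\sqrt\sigma,\tfrac q2\}) $ dominates the exponent arising from $ q t\sigma - tr + (-\tfrac43 t r^{3/2}) $ at its maximum over $ r \ge \sigma $ --- a short convexity/calculus check that I would record as an elementary sub-lemma, using property \eqref{e.expq.property} and the explicit form $ \aisq(y)\le \Con e^{-\frac43 y^{3/2}} $ from \eqref{u-up}.
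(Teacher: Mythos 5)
Your argument for the derivative bound \eqref{e.trn.bd} is essentially the paper's: split at $r=\sigma$, on $r\le\sigma$ use $(e^{-t\sigma}+e^{-rt})^{n+1}\ge e^{-(n+1)rt}$ to get the $e^{nrt}$ weight for Lemma~\ref{ap-int} with $q=n$, $y=\sigma$, and on $r\ge\sigma$ drop $e^{-rt}$ from the denominator (the resulting pointwise bound is $e^{(n+1)t\sigma}e^{-rt}$, not $e^{nt\sigma}e^{-rt}$ as you wrote, but integrating $e^{-rt}$ over $[\sigma,\infty)$ recovers the $e^{nt\sigma}$). That part is fine.

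The argument for \eqref{e.tr.bd}, however, has the weight inequalities reversed. On $r\ge\sigma$, the weight $\frac{1}{1+e^{t(\sigma-r)}}$ lies in $[\tfrac12,1)$, so your claim that it is $\le e^{-t(r-\sigma)}=e^{t\sigma}e^{-rt}$ is false (the right side tends to $0$ as $r\to\infty$). What is true, and what the paper uses, is the opposite assignment: on $r\le\sigma$ bound the weight by $e^{t(r-\sigma)}$ (from $\frac1{1+x}\le\frac1x$), which produces exactly the $e^{rt}$ factor needed to invoke Lemma~\ref{ap-int} with $q=1$, $y=\sigma$ together with an overall $e^{-t\sigma}$; on $r\ge\sigma$ bound the weight by $1$ and compute $\int_{t^{2/3}\sigma}^\infty \aisq(r)\,\d r$ explicitly via the Airy ODE, then use $\aisq$-decay and \eqref{e.expq.property} to fold the result under the claimed exponent. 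In your version, bounding the weight by $1$ on $(-\infty,\sigma]$ leaves no $e^{rt}$ factor, so Lemma~\ref{ap-int} is not applicable there, and the $r\ge\sigma$ bound you wrote is simply not an upper bound. Your alternative ``global'' route $\tr(K_{e^{-t\sigma},t})\le e^{-t\sigma}t^{2/3}\int_{\R}e^{rt}\aisq(t^{2/3}r)\,\d r$ is a correct inequality, but after Lemma~\ref{ap-int1} it gives $\Con\,t^{-1/2}e^{t/12-t\sigma}$, which for fixed $\sigma<\tfrac14$ is exponentially larger (in $t$) than the target $\exp(tU_1(\sqrt\sigma)-t\sigma)$, so it cannot close the case of small $\sigma$. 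The split at $r=\sigma$ with the two bounds applied in the correct directions is genuinely needed.
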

\begin{proof}
The starting point of the proof is the explicit expressions \eqref{e.K.int} and \eqref{e.Kn.int} of the traces.
In~\eqref{e.K.int}, set $ s=e^{-\sigma t} $ and divide the integral into $ r<\sigma $ and $ r>\sigma $ to get
\begin{align}\label{airy-parts}
	\tr(K_{e^{-t\s},\revi{t}})
	=
	t^{2/3} \Big( \int_{-\infty}^{\s} + \int_{\s}^{\infty} \Big)\frac{\aisq ( t^{2/3}r)\d r}{1+e^{t\s-tr}}
	:=
	\calI_1 + \calI_2.
\end{align}
For $ \calI_1 $ use $1+e^{t\s-tr} \ge e^{t\s-tr} $ and Lemma~\ref{ap-int} with $q=1$ and $y=\s$. We have, for all $ t \geq t_0 $,
\begin{align} \label{upb-int1}
	\calI_1 \le \Con(t_0) \, \exp(t\revi{U_1}(\min\{\sqrt{\s},\tfrac12\})-t\s).
\end{align}
The second integral $ \calI_2 $ can be calculated explicitly by using Airy differential equation, whereby
\begin{align} 
	\calI_2 =\int_{t^{2/3}\s}^\infty \aisq ( r)\d r:=g(t^{2/3}\s),
	\quad
	g(y)=\tfrac13(2y^2\ai(y)^2-2y\ai'(y)^2-\ai(y)\ai'(y)).
\end{align}
Using the known $ |y|\gg 1 $ asymptotics of $ \ai(y) $ and $ \ai'(y) $ (see Equations (1.07), (1.08), and (1.09) in Chapter 11 of \cite{olver1997asymptotics} for example), 
we obtain $ g(y) \le \Con \exp(-\frac43y^{3/2}) $ for all $y\ge 0$. 
Using~\eqref{e.expq.property} we further bound the exponent $ -\tfrac43y^{3/2} \le \expf_1(\min\{\sqrt{y},\tfrac12\})-y$ for all $y\ge0$.
From this we conclude~\eqref{e.tr.bd}.

Moving on, similarly to the preceding,
in~\eqref{e.K.int} we set $ s=e^{-\sigma t} $ and divide the integral into $ r<\sigma $ and $ r>\sigma $ to get
\begin{align*}
	|\tr(K_{e^{-t\s},\revi{t}}^{(n)})| 
	= n!t^{2/3} \Big( \int_{-\infty}^{\s}+\int_{\s}^{\infty} \Big) \frac{e^{-rt}\aisq ( t^{2/3}r)\d r}{(e^{-\s t}+e^{-rt})^{n+1}}
	:= \calJ_1 + \calJ_2.
\end{align*}
For $ \calJ_1 $, use $e^{-\s t}+e^{-rt} \ge e^{-rt}$ and Lemma~\ref{ap-int} with $q=n$ to get, for $ t\ge t_0 $, 
\begin{align*}
	\calJ_1 \le \int_{-\infty}^{\s} e^{nrt}\aisq ( t^{2/3}r)\d r  
	\le 
	n! \, \Con(t_0) \, t^{-5/6}\exp(t\expf_n(\min\{\sqrt{\s},\tfrac{n}{2}\})) 
	\le
	n! \, \Con(t_0) \, \exp(t\expf_n(\min\{\sqrt{\s},\tfrac{n}{2}\})).
\end{align*}
This gives the desired bound for showing~\eqref{e.trn.bd}.
As for $ \calJ_2 $, use $e^{-\s t}+e^{-rt} \ge e^{-\s t}$ and the fact that $ \aisq $ is non-increasing to get
\begin{align*} 
	\calJ_2 & \le e^{t(n+1)\s}\aisq ( t^{2/3}\s)\int_{\s}^{\infty} e^{-rt}\d r = t^{-1}e^{tn\s}\aisq ( t^{2/3}\s). 
\end{align*}
Further bounding $ \aisq (y) \le \Con \exp(-\tfrac43y^{3/2}) $ (by~\eqref{u-up}) gives $ \calJ_2 \leq t^{-1}_0 \exp(t\expf_n(\sqrt{\s})) $, for all $ t \geq t_0 $.
From~\eqref{e.expq.property} we have $ \expf_q(\sqrt{s}) \le \expf_q(\min\{\sqrt{s},q/2\}) $, for all $ \s,q>0 $.
From this we conclude $ \calJ_2 \leq t^{-1}_0 \exp(t\expf_n(\min\{\sqrt{s},\tfrac{n}2\})) $, for all $ t \geq t_0 $.
This completes the proof of \eqref{e.trn.bd}.
\end{proof}

\subsection{Interchange of sum and derivatives}\label{sec.higher.justify}
In this subsection, we show that the series 
\begin{align}
	\label{e.series}
	\sum_{L=1}^\infty (-1)^L \tr(K_{s,t}^{\wedge L})  
\end{align}
is infinitely differentiable in $ s $ and the derivative can be obtained by taking term-by-term differentiation.
To this end we will use the following standard criterion:
\begin{proposition}\label{p.termwise}
Let $ f_k(s) $, $ k\in\Z_{>0} $, be \revi{$(n+1)$ times} continuously differentiable functions on $ s\in[0,1] $, where $ n\in\Z_{>0} $.
If the series $ f(s) := \sum_{k=1}^\infty f_k(s) $ converges absolutely at each $ s\in[0,1] $,
and if the absolute derivative series $ \sum_{k=1}^\infty |\frac{\d^j~}{\d s^{j}}f_k(s)| $ converges uniformly over bounded intervals in $ [0,1] $, for all $ j=1,\ldots,n+1$,
then $ f $ is $ n $-th differentiable for all $s\in[0,1]$ with $ \frac{\d^j~}{\d s^{j}} f(s)= \sum_{k=1}^\infty \frac{\d^j~}{\d s^{j}}f_k(s) $, for all $ j=1,\ldots,n$.
\end{proposition}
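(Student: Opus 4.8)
The plan is to prove Proposition~\ref{p.termwise} by induction on the order of differentiation, with the workhorse being the classical one-step principle: \emph{if $g_k\in C^1([0,1])$, if $\sum_k g_k$ converges at one point of $[0,1]$, and if $\sum_k g_k'$ converges uniformly on $[0,1]$, then $\sum_k g_k$ converges on $[0,1]$ to a $C^1$ function with derivative $\sum_k g_k'$.} Before invoking it I would record two preliminaries. First, for each $j\in\{1,\dots,n+1\}$ the series $\sum_k f_k^{(j)}$ converges absolutely at every $s\in[0,1]$, simply because $\sum_k|f_k^{(j)}|$ converges there (it even converges uniformly); together with the $j=0$ hypothesis this makes $F_j(s):=\sum_k f_k^{(j)}(s)$ well defined for $j=0,1,\dots,n+1$. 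Second, uniform convergence of $\sum_k|f_k^{(j)}|$ entails uniform convergence of $\sum_k f_k^{(j)}$, since $|\sum_{k>K}f_k^{(j)}(s)|\le\sum_{k>K}|f_k^{(j)}(s)|$; hence each $F_j$ with $j\ge 1$ is a uniform limit of continuous functions and is therefore continuous.

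Next I would prove the one-step principle. Fix a base point $s_0\in[0,1]$ at which $\sum_k g_k(s_0)$ converges, and set $h:=\sum_k g_k'$, which converges uniformly and hence is continuous. The fundamental theorem of calculus gives $g_k(s)=g_k(s_0)+\int_{s_0}^{s}g_k'(u)\,\d u$ for each $k$; summing over $k$ and interchanging the sum with the integral --- legitimate because $\sum_k g_k'$ converges uniformly on the compact interval between $s_0$ and $s$ --- yields $\sum_k g_k(s)=\sum_k g_k(s_0)+\int_{s_0}^{s}h(u)\,\d u$. The right-hand side is finite for every $s\in[0,1]$, so $G:=\sum_k g_k$ is well defined on $[0,1]$, and since $h$ is continuous, $G\in C^1([0,1])$ with $G'=h=\sum_k g_k'$.

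Finally comes the induction. For each $j\in\{0,1,\dots,n\}$ I would apply the one-step principle with $g_k:=f_k^{(j)}$: these functions are $C^1$ because $f_k\in C^{n+1}$ and $j\le n$; $\sum_k g_k=F_j$ converges everywhere on $[0,1]$ by the first preliminary (or by hypothesis when $j=0$), in particular at a base point; and $\sum_k g_k'=\sum_k f_k^{(j+1)}$ converges uniformly on $[0,1]$ since $j+1\le n+1$. The principle then gives that $F_j$ is differentiable with $F_j'=F_{j+1}$. Chaining these identities from $j=0$ upward produces $f=F_0$, $f'=F_1$, \dots, $f^{(n)}=F_n$, i.e.\ $\frac{\d^j}{\d s^j}f(s)=\sum_k\frac{\d^j}{\d s^j}f_k(s)$ for $j=1,\dots,n$, which is the assertion (the $j=n$ step moreover shows $f^{(n)}=F_n$ is itself $C^1$, with $(f^{(n)})'=F_{n+1}$, so nothing is wasted in assuming regularity up to order $n+1$). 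There is essentially no obstacle here --- this is a standard real-analysis fact --- and the only point demanding care is the interchange of $\sum_k$ and $\int$ in the one-step principle, which is exactly where the uniform, rather than merely pointwise, convergence of the derivative series enters.
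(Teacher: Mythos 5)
Your proof is correct. It differs from the paper's route: the paper disposes of this proposition in a single sentence, stating that it follows from Dini's theorem applied to the partial sums $k\mapsto\sum_{\ell=1}^k\int_0^s |f_\ell^{(j+1)}(u)|\,\d u$, whereas you give a self-contained induction on the order of differentiation built on the classical one-step termwise-differentiation principle, proved directly via the fundamental theorem of calculus and a sum/integral interchange. Both are legitimate. Your approach is arguably the more transparent one for the hypotheses as stated: since uniform convergence of each absolute derivative series $\sum_k|f_k^{(j)}|$, $j=1,\dots,n+1$, is \emph{assumed}, there is no pointwise-to-uniform upgrade left for Dini to supply, and the sum/integral interchange is already justified. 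Dini's theorem would become the crucial tool only under a weaker hypothesis (e.g.\ mere pointwise convergence of the absolute derivative series to a continuous limit), which the paper's phrasing may have been gesturing at; under the hypothesis actually written, your FTC-plus-induction argument is the natural proof, and you correctly observe both that absolute uniform convergence implies uniform convergence of $\sum_k f_k^{(j)}$, and that the $j=n$ step uses the regularity at order $n+1$ so the hypothesis is not wasteful.
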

\noindent
The proof of this proposition is \revi{standard by} applying Dini's theorem to the sequence $ \sum_{\ell=1}^k\int_0^s |\frac{\d^{j+1}~}{\d s^{j+1}}f_\ell(s)| \d s $.

Let us consider first the $ s $ derivative of $ \tr(K_{s,t}^{\wedge L}) $.
Recall from~\eqref{e.Kn} and Lemma~\ref{l.diff.term} that $ K^{(n)}_{s,t} $ denotes the $ n $-th $ s $ derivative of $ K_{s,t} $.
Fix any orthonormal basis $\{e_i\}_{i\ge 1}$ for $L^2(\R_{\geq 0})$ and write
\begin{align}
	\label{e.tr^L}
	\tr(K_{s,t}^{\wedge L}) = \sum_{i_1<\ldots<i_L} \hspace{-5pt} \big\langle e_{i_1}\wedge\ldots\wedge e_{i_L}, K_{s,t}e_{i_1}\wedge\ldots\wedge K_{s,t}e_{i_L} \big\rangle
	=
	\sum_{i_1<\ldots<i_L} \hspace{-5pt} \det\big( \big\langle e_{i_k}, K_{s,t} e_{i_\ell} \big\rangle \big)_{k,\ell=1}^L.
\end{align}
Formally taking $ \partial_s^n $ in~\eqref{e.tr^L} and passing (without justification at the moment)
the derivatives into the sum and inner product suggest that the following should hold
\begin{align*}
	\partial_s^n \tr(K_{s,t}^{\wedge L})
	=
	\sum_{i_1<\ldots<i_L} \sum_{\vec{m}\in\Msp(L,n) } \binom{n}{\vec{m}}\det\big( \big\langle e_{i_k}, K^{(m_{\revi{\ell}})}_{s,t} e_{i_\ell} \big\rangle \big)_{k,\ell=1}^L,
\end{align*}
where
\begin{align}
	\label{e.Msp}
	\Msp(L,n) &:= \big\{ \vec{m}=(m_1,\ldots,m_L)\in (\Z_{\geq 0})^L : m_1+\cdots+m_L=n \big\},
\\
	\label{e.binom}
	\binom{n}{\vec{m}} &:= \frac{n!}{m_1!\cdots m_L!}.
\end{align}
We now proceed to justify this formal calculation. Doing so requires an inequality. Recall that $ \norm{\Cdot}_2 $ denotes the Hilbert--Schmidt norm.

\begin{lemma}\label{perm-ineq} 
Fix any $ k\in\Z_{>0} $ and any permutation $ \pi\in \mathbb{S}_k $.
Let $ T_1,T_2,\ldots,T_k $ be \revi{self-adjoint} Hilbert--Schmidt operators on a separable \revi{Hilbert space} $ H $, and let $\{e_i\}_{i\ge 1}$ be any orthonormal basis. 
Then
\begin{align} \label{perm-eqn}
	\sum_{i_1,\ldots,i_k\in\Z_{>0}} \prod_{\ell=1}^k \big|\langle e_{i_\ell},T_{\pi(\ell)}e_{i_{\pi(\ell)}}\rangle \big| 
	\le 
	\prod_{i=1}^k \norm{T_i}_2.
\end{align}
\end{lemma}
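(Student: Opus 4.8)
The plan is to reduce the sum over all $k$-tuples of indices to a product of Hilbert--Schmidt norms by repeated application of the Cauchy--Schwarz inequality, one index at a time. First I would decompose the permutation $\pi$ into disjoint cycles; since both sides of \eqref{perm-eqn} factor over the orbits of $\pi$ acting on $\{1,\dots,k\}$ (the left side being a product of sums over the indices in each cycle, the right side a product of the corresponding $\|T_i\|_2$), it suffices to prove the inequality when $\pi$ is a single $k$-cycle, say $\pi=(1\,2\,\cdots\,k)$. In that case the left side becomes
\begin{align*}
	\sum_{i_1,\dots,i_k} \big|\langle e_{i_1}, T_1 e_{i_2}\rangle\big|\,\big|\langle e_{i_2},T_2 e_{i_3}\rangle\big|\cdots \big|\langle e_{i_k},T_k e_{i_1}\rangle\big|,
\end{align*}
a "cyclic trace-like" sum, and the goal is to bound it by $\prod_{i=1}^k \|T_i\|_2$.

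The main step is then an inductive Cauchy--Schwarz argument. Writing $a^{(i)}_{jk}:=|\langle e_j,T_i e_k\rangle|$ so that $\sum_{j,k}(a^{(i)}_{jk})^2=\|T_i\|_2^2$, I would first sum over $i_1$: by Cauchy--Schwarz in the $i_1$ variable,
\begin{align*}
	\sum_{i_1} a^{(k)}_{i_k i_1} a^{(1)}_{i_1 i_2}
	\le \Big(\sum_{i_1} (a^{(k)}_{i_k i_1})^2\Big)^{1/2}\Big(\sum_{i_1}(a^{(1)}_{i_1 i_2})^2\Big)^{1/2},
\end{align*}
which splits the "chain" at $i_1$ into two half-chains whose endpoints carry square-summed weights. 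Iterating this — applying Cauchy--Schwarz successively at $i_2, i_3, \dots$ and using the elementary bound $\sum_j b_j c_j \le (\sum_j b_j^2)^{1/2}(\sum_j c_j^2)^{1/2}$ together with $\sum_k (a^{(i)}_{jk})^2 \le \|T_i\|_2^2$ (and the self-adjointness of $T_i$, so the roles of rows and columns are symmetric) — each factor $\|T_i\|_2$ is extracted exactly once, yielding the claimed product. I expect the cleanest way to organize the iteration is by induction on $k$: peel off one operator from the cycle by summing over a single shared index via Cauchy--Schwarz, bound one of the two resulting factors by $\|T_1\|_2$ outright, and recognize the remaining factor as the left side of \eqref{perm-eqn} for a $(k-1)$-cycle.

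The main obstacle is bookkeeping rather than anything deep: one must be careful that in the cyclic chain every index $i_\ell$ is shared by exactly two factors, so that each Cauchy--Schwarz split is legitimate and no weight is double-counted, and that the base case $k=1$ (where the sum is $\sum_i |\langle e_i, T_1 e_i\rangle| \le \sum_{i,j}|\langle e_i,T_1 e_j\rangle|^{1/2}\cdot$ — more simply, $\sum_i|\langle e_i,T_1e_i\rangle| \le \sum_i \|T_1 e_i\| \cdot \|e_i\|$, which is not directly $\|T_1\|_2$) is handled correctly; in fact for $k=1$ one uses $\sum_i |\langle e_i,T_1 e_i\rangle|^2 \le \sum_i \|T_1 e_i\|^2 = \|T_1\|_2^2$ only after a preliminary Cauchy--Schwarz pairing the diagonal entry with itself across the cycle, which is why the cycle structure (length $\ge 1$, with the index appearing twice) is what makes the argument go through uniformly. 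I would also note that self-adjointness is used only to avoid distinguishing $T_i$ from $T_i^*$ when bounding column sums by $\|T_i\|_2$; without it one would get $\|T_i\|_2$ or $\|T_i^*\|_2$, which are equal anyway, so the hypothesis is a convenience.
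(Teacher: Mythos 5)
Your approach --- decompose $\pi$ into cycles, reduce to a single $k$-cycle, then peel off one index at a time by Cauchy--Schwarz --- is exactly the paper's, and your step-by-step description of the iteration is correct: each application converts a pending half-norm $\|T_{j-1}e_{i_j}\|_H$ into $\|T_{j-1}\|_2$ while producing the next half-norm $\|T_j e_{i_{j+1}}\|_H$, ending with one final Cauchy--Schwarz over the remaining shared index. Your observation that self-adjointness is only a convenience (to avoid distinguishing $\|T_i\|_2$ from $\|T_i^*\|_2$) is also right.

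Two points worth correcting. First, the ``induction on $k$'' you propose at the end does not close as stated: after one Cauchy--Schwarz in $i_1$ you are left with a \emph{linear chain} weighted at its two endpoints by $\|T_k e_{i_k}\|_H$ and $\|T_1 e_{i_2}\|_H$, not a $(k-1)$-cycle, and bounding $\|T_1 e_{i_2}\|_H\le\|T_1\|_2$ outright detaches $i_2$ from the rest, leaving a sum $\sum_{i_2}|\langle e_{i_2},T_2e_{i_3}\rangle|$ that need not converge (the rows of a Hilbert--Schmidt kernel are only $\ell^2$). Stick with the sequential peeling you describe earlier, which is what the paper writes out. Second, your aside on $k=1$ is mistaken: the relevant sum is $\sum_i|\langle e_i,T_1e_i\rangle|$ \emph{without} a square, and no preliminary Cauchy--Schwarz can produce the square you want. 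In fact \eqref{perm-eqn} fails on length-one cycles --- for a rank-$m$ orthogonal projection the left side is $m$ while $\|T_1\|_2=\sqrt m$ --- and for a fixed point the correct bound is the trace norm $\|T_\ell\|_1$, not $\|T_\ell\|_2$. The paper's proof tacitly assumes cycle length $\ge 2$ (it begins by summing over $i_2$); this is innocuous for the paper's uses, since there the Hilbert--Schmidt norms are anyway immediately replaced by the larger trace norms, but you should not convince yourself the $k=1$ bound holds as stated.
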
 
\begin{proof}
It suffices to prove~\eqref{perm-eqn} for the case when $ \pi $ is a cycle of length $ k $.
For general $\pi\in\mathbb{S}_k $, decompose it into cycles of smaller lengths and apply the result within each cycle.
Further, since the r.h.s.\ of~\eqref{perm-eqn} is symmetric in $ T_1,\ldots,T_k $, we may assume without \revi{loss} of generality $ \pi=(12\ldots k) $.
Under this assumption the l.h.s.\ of \eqref{perm-eqn} becomes
\begin{align} \label{perm-eqn.}
	\sum_{i_1,\ldots,i_k\in\Z_{>0}} \prod_{\ell=1}^k \big|\langle e_{i_\ell},T_{\ell+1}e_{i_{\ell+1}}\rangle \big|, 
\end{align}
with the convention $ T_{k+1} := T_1 $ and $ e_{i_{k+1}} := e_{i_1} $.

Let $ |\Cdot|_H $ denote the norm of the Hilbert space $ H $.
Apply the Cauchy--Schwarz inequality in~\eqref{perm-eqn.} over the sum $ i_2\in\Z_{>0} $,
and within the result recognize $\revi{(\sum_{i_2} |\langle e_{i_1}, T_{2} e_{i_{2}}\rangle|^2)^{1/2} = |T_2e_{i_1}|_H} $
and $\revi{ (\sum_{i_2} |\langle e_{i_2}, T_{3} e_{i_{3}}\rangle|^2)^{1/2} = |T_3e_{i_3}|_H }$.
We have
\begin{align*}
	\text{l.h.s.\ of }\eqref{perm-eqn.} 
	\leq
	\sum_{i_1,i_3,\ldots,i_k} \big|T_2e_{i_1}\big|_H \, \big|T_3e_{i_3}\big|_H \, \prod_{\ell=4}^k \big|\langle e_{i_\ell},T_{\ell+1}e_{i_{\ell+1}}\rangle \big|.
\end{align*}
Next apply the Cauchy-Schwarz inequality over the sum $ i_3\in\Z_{>0} $.
Within the result recognize $ (\sum_{i_3} |T_3e_{i_3}|_H^2)^{1/2} = \norm{T_3}_2 $
and $ (\sum_{i_3} |\langle e_{i_3}, T_{4} e_{i_{4}}\rangle|^2)^{1/2} = |T_4e_{i_4}|_H $.
We have
\begin{align*}
	\text{l.h.s.\ of }\eqref{perm-eqn.} 
	\leq
	\sum_{i_1,i_4,\ldots,i_k} \big|T_2e_{i_1}\big|_H \, \Norm{T_3e_{i_3}}_2  \, \big|T_4e_{i_4}\big|_H \prod_{\ell=5}^L \big|\langle e_{i_\ell},T_{\ell+1}e_{i_{\ell+1}}\rangle \big|.
\end{align*}
Continue this procedure through $ i_j $, $ j=4,\ldots,k $.
Each application of the the Cauchy-Schwarz inequality turns the preexisting $ |T_je_{i_j}|_H $
into $ \norm{T_j}_2 $ and produces $ |T_{j+1}e_{i_{j+1}}|_{H} $.
Finally, after the $ j=k $ step, an application of the Cauchy--Schwarz inequality over $ i_1 $
turns $ |T_2e_{i_1}|_H $ and $ |T_1e_{i_1}|_H $ into $ \norm{T_2}_2  $ and $ \norm{T_1}_2 $.
\end{proof}

\begin{lemma}\label{l.diff.term}
Let $ \Msp(L,n) $ be in~\eqref{e.Msp}. Fix $ L\in\Z_{>0} $, and fix any orthonormal basis $\{e_i\}_{i\ge 1}$ for $L^2(\R_{\geq 0})$.
For any $ t>0 $, the function $ s\mapsto \tr(K_{s,t}^{\wedge L})$ is infinitely differentiable at each $ s\in[0,1] $,
with
\begin{align}
	\label{e.K^n.diff}
	\partial_s^n \tr(K_{s,t}^{\wedge L})
	=
	\sum_{i_1<\ldots<i_L} \sum_{\vec{m}\in\Msp(L,n)}\binom{n}{\vec{m}} \, \det\big( \big\langle e_{i_k}, K^{(\revi{m_\ell})}_{s,t} e_{i_\ell} \big\rangle \big)_{k,\ell=1}^L,
\end{align}
where the r.h.s.\ converges absolutely uniformly over $ [0,1]\ni s $.
\end{lemma}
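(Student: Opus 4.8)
The plan is to apply Proposition~\ref{p.termwise} to the summands $f_{\vec{i}}(s):=\det\big(\langle e_{i_k},K_{s,t}e_{i_\ell}\rangle\big)_{k,\ell=1}^L$, indexed by the ordered tuples $\vec{i}=(i_1<\ldots<i_L)$, so that $\tr(K_{s,t}^{\wedge L})=\sum_{\vec{i}}f_{\vec{i}}(s)$ by~\eqref{e.tr^L}. The first step is to check that each $f_{\vec{i}}$ is $C^\infty$ on $[0,1]$ with termwise-computable derivatives. By Lemma~\ref{l.K.diff}, $s\mapsto K^{(n)}_{s,t}$ is differentiable in operator norm with derivative $K^{(n+1)}_{s,t}$ (the endpoint $s=0$ being covered by the same dominated-convergence bound used in that proof), hence $s\mapsto\langle e_{i_k},K_{s,t}e_{i_\ell}\rangle$ is $C^\infty$ with $n$-th derivative $\langle e_{i_k},K^{(n)}_{s,t}e_{i_\ell}\rangle$; since a determinant is a polynomial in its entries, differentiating it columnwise ($n$-fold Leibniz rule via multilinearity) produces exactly $\partial_s^n f_{\vec{i}}(s)=\sum_{\vec{m}\in\Msp(L,n)}\binom{n}{\vec{m}}\det\big(\langle e_{i_k},K^{(m_\ell)}_{s,t}e_{i_\ell}\rangle\big)_{k,\ell=1}^L$, whose $\vec{i}$-sum is the right side of~\eqref{e.K^n.diff}.

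The second step is the absolute bound that makes Proposition~\ref{p.termwise} applicable. For self-adjoint Hilbert--Schmidt operators $T_1,\ldots,T_L$, expanding $\det\big(\langle e_{i_k},T_\ell e_{i_\ell}\rangle\big)_{k,\ell}=\sum_{\pi\in\mathbb{S}_L}\sgn(\pi)\prod_\ell\langle e_{i_{\pi(\ell)}},T_\ell e_{i_\ell}\rangle$, discarding the signs and the ordering constraint on $\vec{i}$, and invoking Lemma~\ref{perm-ineq} once for each of the $L!$ permutations, gives $\sum_{\vec{i}}\big|\det\big(\langle e_{i_k},T_\ell e_{i_\ell}\rangle\big)_{k,\ell}\big|\le L!\prod_{\ell=1}^L\norm{T_\ell}_2$. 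Each $K^{(m)}_{s,t}$ is self-adjoint because $(-1)^{m-1}K^{(m)}_{s,t}$ is positive (Lemma~\ref{l.sqtrick}), and $\norm{K^{(m)}_{s,t}}_2\le\norm{K^{(m)}_{s,t}}_1=|\tr(K^{(m)}_{s,t})|$ is finite and bounded uniformly in $s\in[0,1]$ (e.g. via~\eqref{e.K.int}, \eqref{e.Kn.int} and Lemma~\ref{ap-int1}, bounding $\tfrac{e^{-rt}}{(s+e^{-rt})^{m+1}}\le e^{mrt}$). Combining with the first step, $\sum_{\vec{i}}|\partial_s^j f_{\vec{i}}(s)|\le L!\sum_{\vec{m}\in\Msp(L,j)}\binom{j}{\vec{m}}\prod_\ell\norm{K^{(m_\ell)}_{s,t}}_2<\infty$ for every $s$, which in particular yields absolute pointwise convergence of the right side of~\eqref{e.K^n.diff}.

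The last step --- and the one I expect to be the main obstacle --- is the \emph{uniform} convergence over $[0,1]$ of the absolute derivative series $\sum_{\vec{i}}|\partial_s^j f_{\vec{i}}(s)|$ demanded by Proposition~\ref{p.termwise}; a mere uniform bound does not give this. I would instead show the sum is a continuous function of $s$ and then invoke Dini's theorem, the partial sums $\sum_{\vec{i}:\, i_L\le N}|\partial_s^j f_{\vec{i}}(s)|$ being continuous and nondecreasing in $N$. Continuity comes from the columnwise telescoping identity $\det A-\det B=\sum_\ell\det(B_1,\ldots,B_{\ell-1},A_\ell-B_\ell,A_{\ell+1},\ldots,A_L)$: applying it to the matrices built from $K^{(m_\ell)}_{s,t}$ and $K^{(m_\ell)}_{s',t}$, summing over $\vec{i}$, and bounding each telescope term by the second-step inequality (its columns still correspond to self-adjoint Hilbert--Schmidt operators, one of them being the difference $K^{(m_\ell)}_{s,t}-K^{(m_\ell)}_{s',t}$), one obtains $\sum_{\vec{i}}|\partial_s^j f_{\vec{i}}(s)-\partial_s^j f_{\vec{i}}(s')|\le L!\sum_{\vec{m}}\binom{j}{\vec{m}}\sum_\ell\norm{K^{(m_\ell)}_{s,t}-K^{(m_\ell)}_{s',t}}_2\prod_{\ell'<\ell}\norm{K^{(m_{\ell'})}_{s',t}}_2\prod_{\ell'>\ell}\norm{K^{(m_{\ell'})}_{s,t}}_2$, which tends to $0$ as $s'\to s$ by the trace-norm (hence Hilbert--Schmidt) continuity of $s\mapsto K^{(m)}_{s,t}$ from Lemma~\ref{l.K.diff} together with the uniform bound on the remaining factors over the compact $[0,1]$. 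With these three steps in place, Proposition~\ref{p.termwise} applies for every $n$, which delivers that $\tr(K_{s,t}^{\wedge L})$ is $C^\infty$, that~\eqref{e.K^n.diff} holds, and that its right side converges absolutely and uniformly on $[0,1]$.
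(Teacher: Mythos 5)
Your proof is correct and follows the paper's argument step for step: the product rule plus Lemma~\ref{l.K.diff} to obtain termwise derivatives of the individual determinants, Lemma~\ref{perm-ineq} together with $\norm{K^{(m)}_{s,t}}_2\le\norm{K^{(m)}_{s,t}}_1=|\tr(K^{(m)}_{s,t})|$ to dominate the absolute derivative series, and Proposition~\ref{p.termwise} to conclude. The one place you go further is in verifying the proposition's uniform-convergence hypothesis: the paper's proof derives the uniform bound \eqref{e.l.diff.term.} and then simply asserts uniform convergence, whereas you correctly note that a uniform bound does not by itself yield uniform convergence, and you supply the missing continuity-plus-Dini step via a column-telescoping estimate on the determinants (with the reverse triangle inequality to pass from the difference of absolute values to the absolute value of the difference, and continuity of $s\mapsto K^{(m)}_{s,t}$ in Hilbert--Schmidt norm from Lemma~\ref{l.K.diff}). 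That patch is sound and is strictly needed to literally match the hypothesis of Proposition~\ref{p.termwise} as stated, though a close read of that proposition's Dini-based proof shows it really only needs the uniform bound, since the integrated series $\int_0^s\sum_{\vec{i}}|\partial_u^{j+1}f_{\vec{i}}(u)|\,\d u$ is then automatically absolutely continuous and hence continuous; so the two arguments agree in substance, with yours being the more airtight formulation.
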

\begin{proof}
First, by the product rule of calculus we have
\begin{align*}
	\partial^n_s \, \det\big( \big\langle e_{i_k}, K_{s,t} e_{i_\ell} \big\rangle \big)_{k,\ell=1}^L
	=
	\sum_{\vec{m}\in\Msp(L,n)}\binom{n}{\vec{m}} \, \det\big( \partial_s^{\revi{m_\ell}} \,\big\langle e_{i_k}, K_{s,t} e_{i_\ell} \big\rangle \big)_{k,\ell=1}^L.
\end{align*}
By Lemma~\ref{l.K.diff} for $ u=\infty $, the derivatives on the r.h.s.\ can be passed into the inner product to give
\begin{align}
	\label{e.tr^L.diff}
	\partial^n_s \, \det\big( \big\langle e_{i_k}, K_{s,t} e_{i_\ell} \big\rangle \big)_{k,\ell=1}^L
	=
	\sum_{\vec{m}\in\Msp(L,n)}\binom{n}{\vec{m}} \, \det\big( \big\langle e_{i_k}, K^{(m_\ell)}_{s,t} e_{i_\ell} \big\rangle \big)_{k,\ell=1}^L.
\end{align}

We wish to apply Proposition~\ref{p.termwise} with $ \{f_k\}_{k=1}^\infty $
being an enumeration of $ \{ \det( \big\langle e_{i_k}, K_{s,t} e_{i_\ell} \big\rangle )_{k,\ell=1}^L \}_{i_1<\ldots<i_L} $.
The series in \eqref{e.tr^L} converges absolutely for each $ s\in[0,1] $ (with $ t\in(0,\infty) $ fixed) because $ K^{\wedge L}_{s,t} $ is trace-class.
Given the identity \eqref{e.tr^L.diff} for the derivative series,
it suffices to prove that the r.h.s.\ of~\eqref{e.K^n.diff} converges absolutely and uniformly over $ [0,1]\ni s $.
To this end, apply Lemma~\ref{perm-ineq} with $ k=L $ and $ T_i=K^{(m_{i})}_{s,t} $ to get
\begin{align*}
	\sum_{i_1<\ldots<i_L} \sum_{\vec{m}\in\Msp(L,n)}\binom{n}{\vec{m}} \, \Big| \det\big( \big\langle e_{i_k}, K^{(\revi{m_\ell})}_{s,t} e_{i_\ell} \big\rangle \big)_{k,\ell=1}^L \Big|
	\leq
	L! \sum_{\vec{m}\in\Msp(L,n)}\binom{n}{\vec{m}} \prod_{\revi{\ell=1}}^L \Norm{ K^{(\revi{m_\ell})}_{s,t} }_2. 
\end{align*}
Recall that $ (-1)^{m-1}K^{(m)}_{s,t} $ is a positive trace-class operator,
whereby $ \norm{ K^{(m)}_{s,t} }_2 \leq \norm{ K^{(\revi{m})}_{s,t} }_1 = |\tr(K^{(m)}_{s,t})| $ and
\begin{align}
	\label{e.l.diff.term.}
	\sum_{i_1<\ldots<i_L} \sum_{\vec{m}\in\Msp(L,n)}\binom{n}{\vec{m}} \, \Big| \det\big( \big\langle e_{i_k}, K^{(\revi{m_\ell})}_{s,t} e_{i_\ell} \big\rangle \big)_{k,\ell=1}^L \Big|
	\leq
	L! \sum_{\vec{m}\in\Msp(L,n)}\binom{n}{\vec{m}} \prod_{\revi{\ell=1}}^L \big| \tr(K^{(\revi{m_\ell})}_{s,t}) \big|. 
\end{align}
The bounds from Lemma~\ref{l.trace} guarantee that the r.h.s.\ of~\eqref{e.l.diff.term.} converges uniformly over $ [0,1]\ni s $, for fixed $ t>0 $.
\end{proof}

We now consider the $ s $ derivative of the series~\eqref{e.series}.
\begin{proposition}\label{interchange}
Let $ \Msp(L,n) $ be in~\eqref{e.Msp}.
For $ \vec{m}\in\Msp(L,n) $, set $ \revi{\vec{m}}_{>0} := \{k:m_k>0\} \subset\{1,\ldots,L\} $ and let $ |\vec{m}_{>0}| $ denotes the cardinality.
For any $ t>0 $, the series~\eqref{e.series} is infinitely differentiable in $ s\in[0,1] $, with
\begin{align} 
	\label{e.justfied}
	\partial^n_s \Big( \sum_{L=1}^\infty (-1)^L\tr(K_{s,t}^{\wedge L}) \Big)
	&=
	\sum_{L=1}^\infty (-1)^L \, \partial^n_s \, \tr(K_{s,t}^{\wedge L}),
\\
	\label{eq-interchange}
	\big|\partial_s^n \tr(K_{s,t}^{\wedge L}) \big|
	&\leq
	\sum_{\vec{m}\in\Msp(L,n)}\binom{n}{\vec{m}} 
	\frac{(|\vec{m}_{>0}|)!}{(L-|\vec{m}_{>0}|)!}
	\prod_{k=1}^L \big|\tr(K^{(m_k)}_{s,t})\big|.	
\end{align}
\end{proposition}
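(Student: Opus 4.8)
The plan is to establish the pointwise bound \eqref{eq-interchange} first, for every $ n\in\Z_{>0} $, and then deduce the termwise differentiability \eqref{e.justfied} from it, \eqref{eq-interchange}, and Lemma~\ref{l.trace} by invoking the general criterion in Proposition~\ref{p.termwise}.

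For \eqref{eq-interchange}, I would start from the identity of Lemma~\ref{l.diff.term},
\[
	\partial_s^n \tr(K_{s,t}^{\wedge L}) = \sum_{i_1<\ldots<i_L}\ \sum_{\vec m\in\Msp(L,n)}\binom{n}{\vec m}\,\det\big(\langle e_{i_k},K^{(m_\ell)}_{s,t}e_{i_\ell}\rangle\big)_{k,\ell=1}^L .
\]
By the triangle inequality it suffices, for each fixed $ \vec m $ with $ j:=|\vec m_{>0}| $, to prove $ \sum_{i_1<\ldots<i_L}|\det(\langle e_{i_k},K^{(m_\ell)}_{s,t}e_{i_\ell}\rangle)_{k\ell}| \le \tfrac{j!}{(L-j)!}\prod_{k=1}^L|\tr(K^{(m_k)}_{s,t})| $. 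I would expand this determinant over $ \sigma\in\mathbb{S}_L $ and decompose each $ \sigma $ into cycles. A fixed point $ \ell $ of $ \sigma $ yields, after summing the corresponding basis index, the factor $ \sum_{i_\ell}|\langle e_{i_\ell},K^{(m_\ell)}_{s,t}e_{i_\ell}\rangle| = \tr|K^{(m_\ell)}_{s,t}| = |\tr(K^{(m_\ell)}_{s,t})| $, where the first equality uses that $ (-1)^{m_\ell-1}K^{(m_\ell)}_{s,t} $ is positive (as established after \eqref{e.partial.v}); a cycle of length $ \ge 2 $ is controlled by Lemma~\ref{perm-ineq}, producing a product of Hilbert--Schmidt norms, each bounded via $ \norm{K^{(m_\ell)}_{s,t}}_2\le\norm{K^{(m_\ell)}_{s,t}}_1=|\tr(K^{(m_\ell)}_{s,t})| $. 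Thus every slot contributes exactly one factor $ |\tr(K^{(m_k)}_{s,t})| $, which accounts for the product on the right-hand side.

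The delicate point --- and the main obstacle --- is the combinatorial weight: a crude count over all $ L! $ permutations grows far too fast to be summed over $ L $. The mechanism I would exploit is that the $ L-j $ columns $ \ell $ with $ m_\ell=0 $ all carry the \emph{same} positive operator $ K_{s,t} $, so that the antisymmetry of the determinant in these columns, combined with the ordered range $ i_1<\cdots<i_L $ of the basis sum, prevents these slots from being overcounted and collapses the combinatorial weight to $ \tfrac{1}{(L-j)!} $ --- the same decay responsible for convergence of a Fredholm determinant; the $ j! $ records permutations among the $ j $ remaining ``special'' slots and $ \binom{n}{\vec m} $ is inherited from the product rule. (The estimate generalizes the elementary fact that $ \tr(T^{\wedge L})\le\tfrac1{L!}\norm{T}_1^L $ for a positive trace-class operator $ T $.)

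Granting \eqref{eq-interchange} for all $ n $ (hence for $ n+1 $ as well), I would finish as follows, with $ t>0 $ fixed. Insert the bounds of Lemma~\ref{l.trace}, which hold uniformly over $ \sigma\in[0,\infty] $, i.e.\ uniformly over $ s=e^{-t\sigma}\in[0,1] $: this gives $ |\tr(K_{s,t})|\le\Con(t) $ and $ |\tr(K^{(m)}_{s,t})|\le m!\,\Con(t) $ for $ 1\le m\le n $. The identity $ \binom{n}{\vec m}\prod_{k=1}^L m_k!=n! $ cancels the factorials, and since $ \#\Msp(L,n) $ is polynomial in $ L $ while $ j=|\vec m_{>0}|\le n $, the right-hand side of \eqref{eq-interchange} is bounded by $ \Con(n,t)\,L^{n}\,\Con(t)^{L}/(L-n)! $, which is summable over $ L $, uniformly in $ s\in[0,1] $. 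Since moreover each $ s\mapsto\tr(K_{s,t}^{\wedge L}) $ is infinitely differentiable on $ [0,1] $ with continuous derivatives (Lemma~\ref{l.diff.term}, using the Schatten-norm continuity of $ K^{(m)}_{s,t} $ from Lemma~\ref{l.K.diff}) and $ \sum_L\tr(K_{s,t}^{\wedge L}) $ converges absolutely at each $ s $ (with $ |\tr(K_{s,t}^{\wedge L})|\le\tfrac1{L!}\norm{K_{s,t}}_1^L $ since $ K_{s,t} $ is positive trace-class), the hypotheses of Proposition~\ref{p.termwise} are satisfied with $ f_L(s)=(-1)^L\tr(K_{s,t}^{\wedge L}) $ and every order up to $ n+1 $. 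This yields \eqref{e.justfied} for each $ n $, and hence the asserted infinite differentiability of the series \eqref{e.series}.
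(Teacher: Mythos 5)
The overall architecture you propose---reduce \eqref{e.justfied} to Proposition~\ref{p.termwise}, expand $\partial_s^n\tr(K_{s,t}^{\wedge L})$ via Lemma~\ref{l.diff.term}, and control cycles of the permutation expansion with Lemma~\ref{perm-ineq} plus positivity of $(-1)^{m-1}K^{(m)}_{s,t}$---is exactly the paper's, and the endgame (insert Lemma~\ref{l.trace}, use $\#\Msp(L,n)\lesssim L^n$, sum over $L$) is fine.

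However, there is a genuine gap at exactly the point you flag as "the delicate point." After taking absolute values inside the permutation sum, the determinant's sign structure is gone, so a direct cycle-by-cycle estimate produces $L!\prod_k|\tr(K^{(m_k)}_{s,t})|$, which is far too large to sum over $L$. The mechanism you invoke to collapse this to $\tfrac{j!}{(L-j)!}\prod_k|\tr(K^{(m_k)}_{s,t})|$---"the antisymmetry of the determinant in the $m_\ell=0$ columns, combined with the ordered range"---does not yield a bound once you are working with $\sum_\sigma|\cdot|$: absolute values destroy the cancellation that antisymmetry provides, and for a generic orthonormal basis $\{e_i\}$ the off-diagonal entries $\langle e_{i_k},K_{s,t}e_{i_\ell}\rangle$ with $k\neq\ell$ are nonzero, so every $\sigma\in\mathbb{S}_L$ contributes. (In fact the sharper bound $\sum_{i_1<\cdots<i_L}|\det(\cdot)|\le\tfrac{j!}{(L-j)!}\prod_k|\tr(K^{(m_k)}_{s,t})|$ need not even hold for an arbitrary basis.)

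What makes the collapse rigorous in the paper is the choice of basis: one specializes $\{e_i\}$ to the eigenbasis $\{v_i\}$ of the compact self-adjoint operator $K_{s,t}$. Then for a slot with $m_k=0$ the entry is $\langle v_{i_k},K_{s,t}v_{i_\ell}\rangle=\lambda_{i_\ell}\delta_{i_k,i_\ell}$, so the product over $k$ \emph{vanishes identically} unless the permutation fixes every index in $\{k:m_k=0\}$; this reduces the permutation sum to $\mathbb{S}(\vec m_{>0})$, of size $j!$, and then releasing the ordered sum over the $L-j$ free indices costs exactly $(L-j)!^{-1}$, with $\sum_i|\lambda_i|=\tr(K_{s,t})$ supplying the $|\tr(K^{(0)}_{s,t})|$ factors. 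Your sketch of the elementary analogue $\tr(T^{\wedge L})\le\tfrac1{L!}\|T\|_1^L$ is the right intuition (it too is really a statement about eigenvalues), but the eigenbasis specialization is the step that is missing and needs to be stated explicitly; without it the argument does not close.
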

\begin{proof}
We will appeal to Proposition~\ref{p.termwise}, with the choice $ f_L(s) = (-1)^L\tr(K_{s,t}^{\wedge L}) $.
Doing so requires bounds on the derivatives series, which we achieve by using Lemma~\ref{l.diff.term}. 
This lemma holds for \emph{any} orthonormal basis, and here, with $ K_{s,t} $ being compact and symmetric, we specialize to the eigenbasis of $ K_{s,t} $.
Let $ \{ v_{i}\}_{i\geq 1} $ be an orthonormal basis of $ K_{s,t} $, with eigenvalue $ \lambda_{i} $.
Indeed $ v_{i} $ and $ \lambda_i $ depend on $ s,t $, but we omit such dependence since in the subsequent analysis we will \emph{not} vary $ s,t $.
Expand the determinant in~\eqref{e.K^n.diff} into a sum of permutations, and specialize to $ e_{i}=v_{i} $:
\begin{align}
	\label{e.K^n.diff.}
	\partial_s^n \tr(K_{s,t}^{\wedge L})
	=
	\sum_{i_1<\ldots<i_L} \sum_{\vec{m}\in\Msp(L,n)}\binom{n}{\vec{m}} \, 
	\sum_{\pi\in\mathbb{S}_L} \sgn(\pi)
	\prod_{k=1}^L \big\langle v_{i_k}, K^{(m_{\pi(i_k)})}_{s,t} v_{i_{\pi(i_k)}} \big\rangle.
\end{align}
Recall the convention $ K^{(0)}_{s,t} := K_{s,t} $.
Because of the eigenrelation $ K_{s,t}v_{i}=\lambda_{i}v_{i} $, the product in \eqref{e.K^n.diff.} vanishes unless \revi{$ \pi(r)=r $ for all $r\in \{k:m_k=0\} $}.
Such permutations can be reduced to permutations on the set $ \vec{m}_{>0} \subset\{1,\ldots,L\} $,
and we let $ \mathbb{S}(\vec{m}_{>0}) $ denote the subgroup of all such reduced permutations.
The preceding discussion brings~\eqref{e.K^n.diff.} to
\begin{align*}
	\partial_s^n \tr(K_{s,t}^{\wedge L})
	=
	\sum_{i_1<\ldots<i_L} \sum_{\vec{m}\in\Msp(L,n)}\binom{n}{\vec{m}} \, 
	\prod_{k:m_k=0} \lambda_{i_k}
	\sum_{\pi\in\mathbb{S}(\vec{m}_{>0})} \sgn(\pi)
	\prod_{k\in \vec{m}_{>0}} \big\langle v_{i_k}, K^{(m_{\pi(i_k)})}_{s,t} v_{i_{\pi(i_k)}} \big\rangle.
\end{align*}
To bound this expression, take absolute value and pass it into the sum and products on the r.h.s.,
bound the ordered sum $ \sum_{i_1<\ldots<i_L} $ by the symmetrized sum $ \frac{1}{(L-|\vec{m}_{>0}|)!} \sum_{i_k:m_k=0} \sum_{i_{\ell}:\ell\in\vec{m}_{>0}} $,
and then use $ \sum_{i}|\lambda_i|=\sum_{i}\lambda_i=\tr(K_{s,t})=\tr(K^{(0)}_{s,t}) $.
We have
\begin{align*}
	\big|\partial_s^n \tr(K_{s,t}^{\wedge L}) \big|
	\leq
	\sum_{\vec{m}\in\Msp(L,n)}\binom{n}{\vec{m}} 
	\frac{1}{(L-|\vec{m}_{>0}|)!} 
	\tr(K^{(0)}_{s,t})^{L-|\vec{m}_{>0}|}
	\sum_{\pi\in\mathbb{S}(\vec{m}_{>0})}
	\sum_{i_{\ell}:\ell\in\vec{m}_{>0}}
	\prod_{\ell\in \vec{m}_{>0}} \big| \big\langle v_{i_\ell}, K^{(m_{\pi(i_\ell)})}_{s,t} v_{i_{\pi(i_\ell)}} \big\rangle \big|.
\end{align*}
Now apply Lemma~\ref{perm-ineq} with $ k\mapsto|\vec{m}_{>0}| $ and with the $ T_i $'s being the $ K^{(m_k)}_{s,t} $'s,
and use $ \norm{K^{(m)}_{s,t}}_2 \leq \norm{K^{(m)}_{s,t}}_1 = |\tr(K^{(m)}_{s,t})| $.
We further obtain
\begin{align}
	\label{e.K^n.diff..}
	\big|\partial_s^n \tr(K_{s,t}^{\wedge L}) \big|
	\leq
	\sum_{\vec{m}\in\Msp(L,n)}\binom{n}{\vec{m}} 
	\frac{(|\vec{m}_{>0}|)!}{(L-|\vec{m}_{>0}|)!}
	\prod_{k=1}^L \big|\tr(K^{(m_k)}_{s,t})\big|.
\end{align}
This is exactly~\eqref{eq-interchange}.

The bounds from Lemma~\ref{l.trace} \revi{ensure} that $ \prod_{k=1}^L |\tr(K^{(m_k)}_{s,t})| \leq \Con(t,n)^L $, for all $ s\in[0,1]$. 
Given this, it is straightforward to verify that, when summed over $ L\geq 1 $,
the r.h.s.\ of \eqref{e.K^n.diff..} converges uniformly over  $ [0,1]\ni s $, for fixed $ t,n $.
Proposition~\ref{p.termwise} applied with $ f_L(s) = (-1)^L\tr(K_{s,t}^{\wedge L}) $ completes the proof.
\end{proof}

\subsection{Bounds.}\label{last-piece}
The goal of this subsection is to bound the term $ \calB_{p,L}(t) $, defined in~\eqref{e.calBL}.
Recall $ \Msp(L,n) $ from~\eqref{e.Msp}.
Referring to~\eqref{e.calBL} and~\eqref{eq-interchange}, we see that
\begin{align}
	\label{e.calBL.id}
	\revi{\left|\calB_{p,L}(t)\right|} 
	\leq
	\frac{1}{\Gamma(1-\alpha)}
	\sum_{\vec{m}\in\Msp(L,n)}\binom{n}{\vec{m}} 
	\frac{(|\vec{m}_{>0}|)!}{(L-|\vec{m}_{>0}|)!}
	\prod_{k=1}^L \int_0^1 \, s^{-\alpha} \, \big|\tr(K^{(m_k)}_{s,t})\big| \, \d s.	
\end{align}
In view of~\eqref{e.calBL.id}, we \revi{first} establish
\begin{proposition}\label{p.error.bd.}
Fix any $ t_0,p_0> 0 $. There exists a constant $\Con=\Con(t_0,p_0)>0$ such that for all $t>t_0$, $p \ge p_0$, $ L\geq 2 $, and $ \vec{m}=(m_1,\ldots,m_L)\in\Msp(L,n) $,
\begin{align}\label{lap-eq-def}
	\frac1{\Gamma(1-\alpha)}\int_0^1 s^{-\alpha} \prod_{j=1}^{\revi{L}} |\tr(K_{s,t}^{(m_j)})|\d s
	\leq	
	n\cdot n!\,\Con^L\, \,t^\frac{1}{2} \, e^{\frac{p^3t}{12}-\kappa_p t }.
\end{align}
where $n:=\lfloor p \rfloor+1$ and $\alpha:=p-\lfloor p\rfloor$ and $\kappa_p:=\min\{\frac{1}6,\frac{p^3}{16}\}.$
\end{proposition}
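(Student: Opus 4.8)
The plan is to pass from the $s$-integral to a $\sigma$-integral via $s=e^{-t\sigma}$, apply the pointwise trace bounds of Lemma~\ref{l.trace} factor by factor, and then reduce the matter to a single Laplace-type estimate. Since $\d s=-te^{-t\sigma}\,\d\sigma$ and $s^{-\alpha}=e^{\alpha t\sigma}$, the left side of \eqref{lap-eq-def} equals $\tfrac{t}{\Gamma(1-\alpha)}\int_0^\infty e^{-(1-\alpha)t\sigma}\prod_{j=1}^L|\tr(K_{e^{-t\sigma},t}^{(m_j)})|\,\d\sigma$. Writing $k:=|\vec m_{>0}|$, I would bound the $k$ factors with $m_j>0$ by \eqref{e.trn.bd} and the $L-k$ factors with $m_j=0$ by \eqref{e.tr.bd}, and use $\prod_{j\in\vec m_{>0}}m_j!\le n!$, to reduce the left side of \eqref{lap-eq-def} to a bound of the form $\tfrac{n!\,\Con(t_0)^L\,t}{\Gamma(1-\alpha)}\int_0^\infty e^{t\Psi(\sigma)}\,\d\sigma$, with
\begin{align*}
	\Psi(\sigma):=(L-k)\,h(\sigma)+\sum_{j\in\vec m_{>0}}\expf_{m_j}\big(\min\{\sqrt\sigma,\tfrac{m_j}{2}\}\big)-(1-\alpha)\sigma,\qquad h(\sigma):=\expf_1(\min\{\sqrt\sigma,\tfrac12\})-\sigma.
\end{align*}
One checks that $h\le 0$ and that $h$ and each $\sigma\mapsto\expf_m(\min\{\sqrt\sigma,m/2\})$ are $C^1$ and concave ($C^1$-glued at $\sigma=m^2/4$, where $\expf_m'$ vanishes), hence $\Psi$ is $C^1$, concave, with $\Psi(0)=0$, $\Psi'(0^+)=n-(1-\alpha)=p>0$, and $\Psi(\sigma)\to-\infty$; so $\Psi$ has a unique maximizer $\sigma^*\in(0,\sigma_{\mathrm{sat}})$, where $\sigma_{\mathrm{sat}}:=\tfrac14\max_{j\in\vec m_{>0}}m_j^2\le\tfrac14 n^2$ is the point past which all terms have saturated and $\Psi$ is affine with slope $-(1-\alpha+L-k)\le -(1-\alpha)<0$.

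The heart of the matter is the pointwise bound
\begin{align}
	\label{e.master}
	\sup_{\sigma\ge 0}\Psi(\sigma)\ \le\ \tfrac{p^3}{12}-\kappa_p,\qquad \kappa_p=\min\{\tfrac16,\tfrac{p^3}{16}\},
\end{align}
and proving \eqref{e.master} is where I expect almost all the work, since one must track the integrality of the $m_j$ and verify that the various regimes assemble to exactly $\kappa_p$. I would split on $k$. If $k=1$, say $m_1=n$, then $L\ge 2$ forces some $m_j=0$, so $\Psi\le g+h$ with $g(\sigma):=\expf_n(\min\{\sqrt\sigma,n/2\})-(1-\alpha)\sigma$; here $g(\sigma)=\expf_p(\sqrt\sigma)$ for $\sigma\le n^2/4$ (using $n-(1-\alpha)=p$) and $g+h$ is decreasing on $[n^2/4,\infty)$, so $\sup(g+h)$ is attained on $[0,n^2/4]$, and an explicit computation on $\{\sigma\le\tfrac14\}$ and $\{\sigma>\tfrac14\}$ gives $\sup(g+h)\le\tfrac{p^3}{48}$ for $p\le2$ and $\sup(g+h)\le\tfrac{(p-1)^3+1}{12}$ for $p>2$; both are $\le\tfrac{p^3}{12}-\kappa_p$ since $\tfrac{p^3}{12}-\tfrac{p^3}{48}=\tfrac{p^3}{16}\ge\kappa_p$ and $\tfrac{p^3}{12}-\tfrac{(p-1)^3+1}{12}=\tfrac{p(p-1)}{4}\ge\tfrac16=\kappa_p$ for $p\ge2$. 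If $k\ge2$ then $\Psi\le g$ with $g(\sigma):=\sum_{j\in\vec m_{>0}}\expf_{m_j}(\min\{\sqrt\sigma,m_j/2\})-(1-\alpha)\sigma$, a $C^1$ concave function whose maximizer $\sigma^*>0$ is, for $\alpha>0$, necessarily interior to a smooth piece (at a kink the stationarity condition would equate a sum of nonnegative integers to $1-\alpha\in(0,1)$) and, for $\alpha=0$, treated identically using the active set just to the right of the kink. On the smooth piece through $\sigma^*$, with active set $A$ of size $i$ and $a:=\sum_{j\in A}m_j$, stationarity gives $\sqrt{\sigma^*}=\tfrac{a-(1-\alpha)}{2i}$, hence $g(\sigma^*)=\tfrac{(a-(1-\alpha))^3}{12i^2}+\sum_{j\notin A}\tfrac{m_j^3}{12}$ with every saturated $m_j\le 2\sqrt{\sigma^*}$. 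For $i\ge2$, the bound $\sum_{j\notin A}m_j^3\le(2\sqrt{\sigma^*})^2\sum_{j\notin A}m_j$ and $a-(1-\alpha)\le p$ give $g(\sigma^*)\le\tfrac{(a-(1-\alpha))^2p}{12i^2}\le\tfrac{p^3}{48}$; for $i=1$, integrality forces $m_1\ge2$, so $u:=m_1-(1-\alpha)\ge1$ with $u+\sum_{j\ne1}m_j=p$ and $\sum_{j\ne1}m_j\ge1$, whence $g(\sigma^*)=\tfrac1{12}(u^3+\sum_{j\ne1}m_j^3)\le\tfrac1{12}(u^3+(p-u)^3)=\tfrac{p^3-3u(p-u)p}{12}\le\tfrac{p^3}{12}-\tfrac p4\le\tfrac{p^3}{12}-\kappa_p$. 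This establishes \eqref{e.master}.

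Finally I would estimate $\int_0^\infty e^{t\Psi(\sigma)}\,\d\sigma$. On $[0,\sigma_{\mathrm{sat}})$ one has $\Psi''(\sigma)=-i(\sigma)\sigma^{-1/2}\le-\sigma^{-1/2}\le-2/n$, where $i(\sigma)\ge1$ is the number of active terms; integrating this twice out of $\sigma^*$ (using $\Psi'(\sigma^*)=0$) gives $\Psi(\sigma)\le\Psi(\sigma^*)-\tfrac1n(\sigma-\sigma^*)^2$ for $\sigma\in[0,\sigma_{\mathrm{sat}}]$, while affineness gives $\Psi(\sigma)\le\Psi(\sigma^*)-(1-\alpha)(\sigma-\sigma_{\mathrm{sat}})$ for $\sigma\ge\sigma_{\mathrm{sat}}$; therefore
\begin{align*}
	\int_0^\infty e^{t\Psi(\sigma)}\,\d\sigma\ \le\ e^{t\Psi(\sigma^*)}\Big(\int_\R e^{-\frac tn(\sigma-\sigma^*)^2}\,\d\sigma+\int_{\sigma_{\mathrm{sat}}}^\infty e^{-(1-\alpha)t(\sigma-\sigma_{\mathrm{sat}})}\,\d\sigma\Big)\ \le\ e^{t\Psi(\sigma^*)}\,t^{-1/2}\Big(\sqrt{\pi n}+\tfrac{t_0^{-1/2}}{1-\alpha}\Big).
\end{align*}
Substituting back and using the uniform bounds $\tfrac1{\Gamma(1-\alpha)}\le\Con$ and $\tfrac1{(1-\alpha)\Gamma(1-\alpha)}=\tfrac1{\Gamma(2-\alpha)}\le\Con$ for $\alpha\in[0,1)$, then $\sqrt n\le n$ and \eqref{e.master}, bounds the left side of \eqref{lap-eq-def} by $n\cdot n!\,\Con(t_0)^L\,t^{1/2}\,e^{t(p^3/12-\kappa_p)}$, after absorbing one extra multiplicative constant into $\Con^L$ (via $\Con^{L+1}\le(\max(\Con,1)^2)^L$). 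Note the final $t^{1/2}$ is exactly the product of the $t$ coming from the change of variables and the $t^{-1/2}$ gained from the quadratic decay of $\Psi$ near $\sigma^*$, so the Laplace step cannot be bypassed.
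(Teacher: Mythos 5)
Your proof is correct, and it takes a genuinely different route from the paper's. After the identical opening steps (the substitution $s=e^{-t\sigma}$, applying Lemma~\ref{l.trace} termwise, and bounding $\prod m_j!\le n!$), the paper's argument proceeds by splitting the $\sigma$-integral into five regimes depending on the range of $\sigma$, on $r:=|\vec m_{>0}|$, and on whether $p$ exceeds $L$; in each regime it derives an explicit upper bound on $M(\sigma)$ of the form (constant) minus (quadratic or linear decay away from a local critical point), and then evaluates the five resulting Gaussian/exponential integrals separately, collecting the exponents case by case. Your argument instead treats the exponent $\Psi$ once and for all as a $C^1$, piecewise-smooth concave function, proves a single master pointwise bound $\sup_\sigma\Psi\le \frac{p^3}{12}-\kappa_p$ by analyzing the stationarity condition at the unique maximizer $\sigma^*$ (the active-set decomposition $\sqrt{\sigma^*}=\frac{a-(1-\alpha)}{2i}$, $g(\sigma^*)=\frac{(a-(1-\alpha))^3}{12i^2}+\sum_{j\notin A}\frac{m_j^3}{12}$, split on $k=1$ versus $k\ge2$ and, within $k\ge2$, on $i=1$ versus $i\ge2$), and then appeals to a single, uniform Laplace estimate: $\Psi''\le-2/n$ on $(0,\sigma_{\mathrm{sat}})$ gives Gaussian decay of width $\sqrt{n/t}$ about $\sigma^*$, while $\Psi$ is affine with slope $\le-(1-\alpha)$ beyond $\sigma_{\mathrm{sat}}$. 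This is structurally cleaner and makes the provenance of the $p^3/12-\kappa_p$ exponent and the $n\cdot n!\,t^{1/2}$ prefactor transparent (the $\sqrt n\le n$ comes from the Gaussian width, the $t^{1/2}=t\cdot t^{-1/2}$ from the change of variables and the Laplace step), whereas the paper's case analysis has more moving parts but spells out the decay explicitly in each range. The integrality argument you use to force $m_1\ge 2$ in the $(k\ge2,i=1)$ subcase, and the brief remark about kinks when $\alpha=0$, are the two places where care is most needed, and both check out: at a kink the stationarity relation would read $\sum_{j:\,m_j>m_\ell}(m_j-m_\ell)=1-\alpha$, so it is obstructed for $\alpha\in(0,1)$ and, for $\alpha=0$, the closed-form for $g(\sigma^*)$ is insensitive to whether the borderline index is counted in $A$.
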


\begin{proof}
Fix $ L\geq 2 $, $ p\geq p_0 $, $ \vec{m}=(m_1,\ldots,m_L)\in\Msp(L,n) $. 
To simplify notation, throughout this proof we assume $ t\geq t_0 $ and $ p\geq p_0 $, and write $ \Con=\Con(t_0,p_0) $. Set 
\begin{align}
	\label{e.calI}
	\calI:=\frac1{\Gamma(1-\alpha)}\int_0^1 s^{-\alpha} \prod_{j=1}^{\revi{L}} |\tr(K_{s,t}^{(m_j)})|\d s
\end{align}
and $ |\vec{m}_{>0}|:=r $. Assume without loss of generality $ 0< m_1,\ldots,m_{r} $ and $ m_{r+1}=\cdots=m_L=0 $. 
Our goal is to bound $ \calI $.
In~\eqref{e.calI}, perform a change of variable $s=e^{-t\s}$, apply the bounds from Lemma~\ref{l.trace},
and recall $ \expf_q $ from~\eqref{e.expq}.
We have, for all $ t\ge t_0 $, 
\begin{align}\label{Msig.}
	\calI 
	\le  
	\frac{ \Con^L}{\Gamma(1-\alpha)}
	\int_0^{\infty} e^{t\s\alpha} \Big( \Con e^{t\expf_1(\min\{\sqrt{\s},\frac12 \}-t\s)} \Big)^{L-r} \,\cdot \prod_{j=1}^r (m_j)! e^{t \expf_{m_j}(\min\{\sqrt{\s},\frac{m_j}2\})}  \cdot\, te^{-t\s}\d\s,
\end{align}
Given that $ m_1+\ldots+m_L=n$ we have $ \prod_{j=1}^r (m_j)! \le n! $.
Apply this bound in~\eqref{Msig.}, and combine the exponential functions in the \revi{integrand} together to get $ \exp(tM(\s)) $, where
\begin{align}
	\label{e.Ms}
	M(\s):=(\alpha-L+r-1)\s+(L-r)\expf_1(\min\{\sqrt{\s},\tfrac12 \})+\sum_{j=1}^r \expf_{m_j}(\min\{\sqrt{\s},\tfrac{m_j}2\}).
\end{align}
We arrive at
\begin{align}\label{Msig}
	\calI 
	\le  
	\frac{t \Con^L n!}{\Gamma(1-\alpha)}  \int_0^{\infty} e^{tM(\s)}\,\d\s.
\end{align}

Our next step is to bound the exponent $ M(\s) $, which we do in several different cases.
\begin{enumerate}[leftmargin=0pt]
\item \textbf{When $ \sigma\in[0,\frac14] $.}\\%
	Recall from~\eqref{e.expq} that $ \expf_q(x) $ is increasing on $ x\in[0,q/2] $.
	Hence, for $ \sigma\leq \frac14 $, the `min' operators in \eqref{e.Ms} always pick up $ \sqrt{\s} $, whence $ M(\s) $ simplifies into
	$ M(\s)=p\s-\tfrac{4L}3\s^{3/2}:=g_1(\s). $
	This function $ g_1 $ achieves its maximum $\frac{p^3}{12L^2}$ at $\s=\frac{p^2}{4L^2}$. Further,
	$
		g_1(\s)-\frac{p^3}{12L^2}=-\tfrac13(\sqrt{\s}-\frac{p}{2L})^2(p+4L\sqrt{\s}) \le -\frac{p}3(\sqrt{\s}-\frac{p}{2L})^2.
	$
	This gives
	\begin{align}\label{e.case1.}
		M(\s) \leq \tfrac{p^3}{12L^2}-\tfrac{p}{3}(\sqrt{\s}-\tfrac{p}{2L})^2.
	\end{align}
\item \textbf{When $r\ge 2$ and $ \sigma\in(\frac14,\infty) $.}\\%
	In this case, referring to~\eqref{e.expq}, we see $ \expf_1(\min\{\sqrt\s,\frac12\}) = \expf_1(\frac12) = \frac1{12} $.
	Hence $ M(\s) $ simplifies into
	$
		M(\s) = \s(\alpha-1)-(L-r)(\s-\frac1{12})+\sum_{j=1}^r\expf_{m_j}(\min\{\sqrt{\s},\frac{1}2m_j\}).
	$
	Forgo the negative term $ -(L-r)(\s-\frac1{12}) $ and use~\eqref{e.expq.property} to bound $ \expf_{m_j}(\min\{\sqrt{\s},\frac{1}2m_j\}) \leq \frac{1}{12}m_{j}^3 $.
	We have
	\begin{align}\label{est2.2.}
		M(\s) \le  \s(\alpha-1)+\sum\nolimits_{j=1}^r\tfrac{1}{12}m_j^3.
	\end{align}
	Recall that $ m_1+\ldots+m_r=n $.
	The cubic sum in~\eqref{est2.2.} tends to be larger when mass concentrates on fewer $ m_i $'s.
	Under the \revi{current} assumption $ r\geq 2 $, it is conceivable that the cubit sum is at most $ (n-1)^3+1^3 $.
	To prove this, write $ m_1^3+\ldots+m_n^3 \le m_1^3+(m_2+\cdots+m_n)^3=m_1^3+(n-m_1)^3 $,
	and note that the last expression, as a function of $ m_1\in[1,n-1] $, reaches its maximum at $ m_1=1,(n-1) $. 
	\revi{Using this bound on the cubic sum we have
	\begin{align}\label{e.case2.}
		M(\s) \le \s(\alpha-1)+\tfrac{1}{12}((n-1)^3+1).
	\end{align}}
		
\item \textbf{When $r=1$ and $ \sigma\in(\frac{n^2}{4},\infty) $.}\\%
	Under current assumptions, using~\eqref{e.expq.property} we see that
	\begin{align}\label{e.case3.}
		M(\s)=\s(\alpha-L)+\frac{n^3+L-1}{12}.
	\end{align}			
\item \textbf{When $r=1$, $ \sigma\in(\frac14,\frac{n^2}{4}] $, and $ p>L $.}\\%
	When $ r=1 $ and $ \s>\frac14 $, the exponent $ M(\s) $ takes the form
	\begin{align}
		\label{e.case2.M}
		M(\s)
		=
		\s(\alpha-L)+\tfrac{1}{12}(L-1)+\expf_n(\sqrt\s)
		=
		\s(n+\alpha-L)+\tfrac{1}{12}(L-1)-\tfrac43\sigma^{3/2}
		=
		:g_2(\sigma).
	\end{align}
	Differentiating in $ \sigma $ shows that $ g_2 $ reaches its maximum $ \frac{1}{12}(p-L+1)^3+\frac{1}{12}(L-1) $ at $ \s=(p-L+1)^2/4 $.
	Further
	$
		g_2(\s)-\tfrac{(p-L+1)^3}{12}-\tfrac{L-1}{12} = -\frac13(\sqrt\s-\tfrac{p-L+1}2)^2(p-L+1+4\sqrt\s).
	$
	Using the current assumption $ p>L $ to bound $ (p-L+1+4\sqrt\s) \geq 1 $ we get
	\begin{align}\label{e.case4.}
		M(\s) \leq \tfrac{(p-L+1)^3}{12}+\tfrac{L-1}{12}-\tfrac13(\sqrt\s-\tfrac{p-L+1}2)^2.
	\end{align}	
\item \textbf{When $r=1$, $ \sigma\in(\frac14,\frac{n^2}{4}] $, and $ p\leq L $.}\\%
	Here we also have the expression~\eqref{e.case2.M} of $ M(\s) $.
	Under the current assumption $ p\leq L $.
	Differentiating in $ \s $ shows that $ g_2 $ is decreasing on $ s\in(\frac14,\frac{n^2}{4}] $.
	Further $ g_2(\s)-g_2(\frac14)  = (p-L+1)(\s-\frac14)-\frac43(\s^{3/2}-\frac18) $.
	Use the current assumptions to bound $ (p-L+1)(\s-\frac14) \leq (\s-\frac14) $.
	We get
	$ g_2(\s)-g_2(\frac14) \leq (\s-\frac14)-\frac43(\s^{3/2}-\frac18) = -\frac13(1+4\sqrt\s)(\sqrt\s-\frac12)^2 $.
	Further bound $ -\frac13(1+4\sqrt\s) \leq 1 $.
	Together with $g_2(\frac14)=\frac{3p-2L}{12}$, we have
	\begin{align}\label{e.case5.}
		M(\s) \leq \tfrac{1}{12}(3p-2L)-(\sqrt\s-\tfrac12)^2.
	\end{align}
\end{enumerate}

Now, in each of the preceding case, 
use the \revi{respective} bound~\eqref{e.case1.}, \eqref{e.case2.}, \eqref{e.case3.}, \eqref{e.case4.}, or \eqref{e.case5.}
to bound the integral $ \int_A e^{tM(\s)} \d \s $ on the relevant range $ A $.
For the resulting integral,
\begin{enumerate}[leftmargin=0pt]
\item perform a change of variable $ \sqrt\s\mapsto u $, which introduces a factor $ 2u $;
	bound this factor by $ 2\cdot\frac12 $, release the range of integration from $ u\in(0,\frac12) $ to $ u\in\R $,
	and evaluate the resulting integral.
\item \revi{evaluate the resulting integral.}
\item evaluate the resulting integral.
\item perform a change of variable $ \sqrt\s\mapsto u $, which introduces a factor $ 2u $;
	bound this factor by $ 2u\leq n $,
	release the range of integration from $ u\in(\frac12,\frac{n}{2}) $ to $ u\in\R $, and evaluate the resulting integral.
\item perform a change of variable $ \sqrt\s\mapsto u+\frac12 $, which introduces a factor $ 2u+1 $;
	release the range of integration from $ u\in(0,\frac{n-1}{2}) $ to $ u\in\R_{\geq 0} $, and evaluate the resulting integral.
\end{enumerate}
\medskip
This gives the following bound on $ \int_A e^{tM(\s)} \d \s $ on the relevant region $ A $:
\begin{enumerate}
\item\label{enu.1} $ \Con\,p^{-\frac12}t^{-\frac12} \exp(t\frac{p^3}{12L^2})) $
\revi{\item\label{enu.2} $ \Con\, t^{-1}(1-\alpha)^{-1} \exp( t(\frac{(n-1)^3+1}{12}-\frac{1-\alpha}{4})) $}
\item\label{enu.3} $ \Con\, t^{-1}(L-\alpha)^{-1} \exp(t(\frac{(n^3+L-1)}{12}-\frac{n^2(L-\alpha)}4) ) $
\item\label{enu.4} $ \Con\, t^{-\frac12} n \exp(\frac{t}{12}((p-L+1)^3+(L-1))) $
\item\label{enu.5} $ \Con\,(t^{-1}+t^{-1/2}) \exp(\frac{t}{12}(3p-2L)) $
\end{enumerate}
\noindent{}Our goal is to have the exponent strictly less that $ t\frac{p^3}{12} $.
\begin{enumerate}
\item[(1)] Since $ L\geq 2 $ we have $ \frac{p^3}{12L^2} \leq \frac{p^3t}{12}-\frac{p^3}{16} $.
\revi{\item[(2)] Under the current assumption $ r\geq 2 $ forces $ n\geq 2 $, and $p\ge 1$ and hence 
\begin{align*}
\tfrac{(n-1)^3+1}{12}-\tfrac{1-\alpha}{4}=\tfrac{p^3}{12}-\tfrac{(p(n-1)-1)\alpha}{4}-\tfrac{\alpha^3}{12}-\tfrac16 \leq	 \tfrac{p^3}{12}-\tfrac16.
\end{align*}}	
\item[(3)] The exponent in~\eqref{enu.3} therein is decreasing in $ L $. This gives
	\begin{align*}
		\tfrac{n^3+L-1}{12}-\tfrac{n^2(L-\alpha)}{4} \le \tfrac{n^3+1}{12}-\tfrac{n^2(2-\alpha)}{4} =\tfrac{p^3}{12}-\tfrac{(1-\alpha)^2(p+2n)}{12}-\tfrac{3n^2-1}{12}\le \tfrac{p^3}{12}-\tfrac16.
	\end{align*} 
\item[(4)] View the exponent in~\eqref{enu.4} as a function $ g_3(x) := \frac{1}{12}((p-x)^3+x) $ of $ x:= L-1 $.
	Under the relevant assumption $ p\geq L $ and $ 2\leq L $, differentiating $ g_3 $ show that $ g_3 $ is maximized at $ x=1 $.
	This gives $ \frac{1}{12}((p-L+1)^3+(L-1)) \leq g_3(1)= \frac{1}{12}(p^3-3p^2+3p) \le \frac1{12}(p^3-6) $.
\item[(5)]  Use $ L\geq 2 $ to bound $ \frac{1}{12}(3p-2L) \leq \frac{1}{12}(3p-4) $.
	For $ p\geq 0 $, the last expression is always bounded by $ \frac{p^3}{12}-\frac{1}{6} $, which gives
	$ \frac{1}{12}(3p-2L) \leq \frac{p^3}{12}-\frac{1}{6} $.
\end{enumerate}

Collect the preceding discussion and refer back to~\eqref{Msig}. We arrive at
\begin{align*}
	\calI \leq e^{\frac{p^3t}{12}} \Con^L \frac{n!}{\Gamma(1-\alpha)}
	\Big( 
		p^{-\frac12}t^\frac12 e^{-\frac{p^3t}{16}} 
		+ \frac{e^{-\frac{t}{6}}}{(1-\alpha)}
		+ e^{-\frac{t}{6}}		
		+ n t^\frac12 e^{-\frac{t}{2}}
		+ (1+t^\frac12) e^{-\frac{t}{6}}
	\Big).
\end{align*}
Further apply the bounds $ p^{-\frac12}\leq p_0^{-\frac12}=\Con $, $ \frac{1}{\Gamma(1-\alpha)} \leq \Con $, and $ \frac{1}{(1-\alpha)\Gamma(1-\alpha)} \leq \Con $, for all $ \alpha\in[0,1) $. We conclude the desired result.
\end{proof}

\begin{proposition} \label{p.error.bd} Fix any $ t_0,p_0> 0 $. 
Recall $ \calB_{p,L}(t) $ from~\eqref{e.calBL}.
There exists a constant $\Con=\Con(t_0,p_0)>0$ such that for all $t>t_0$ and $p \ge p_0$,
\begin{align}\label{ho-eq}
	\sum_{L\geq 2} |\calB_{p,L}(t)|
	\le  
	n\cdot (n!)^2 \, (n\Con)^n\,t^{\frac12} \, e^{\frac{p^3t}{12}-\kappa_pt},
\end{align}
where $n:=\lfloor p \rfloor+1$ and $\alpha:=p-\lfloor p\rfloor$, and $\kappa_p:=\min\{\frac{1}6,\frac{p^3}{16}\}.$
\end{proposition}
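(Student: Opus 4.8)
The plan is to combine the pointwise bound~\eqref{e.calBL.id} with Proposition~\ref{p.error.bd.} and then carry out a combinatorial summation over $L$. By Proposition~\ref{p.error.bd.}, the integral factor appearing in~\eqref{e.calBL.id} is bounded by $n\cdot n!\,\Con^L\,t^{1/2}\,e^{\frac{p^3t}{12}-\kappa_p t}$ uniformly over $\vec m\in\Msp(L,n)$; pulling this factor out of the sum over $\vec m$ yields
\begin{align*}
	\big|\calB_{p,L}(t)\big|\;\le\;n\cdot n!\,\Con^L\,t^{1/2}\,e^{\frac{p^3t}{12}-\kappa_p t}\cdot S_{L,n},
	\qquad
	S_{L,n}\;:=\;\sum_{\vec m\in\Msp(L,n)}\binom{n}{\vec m}\,\frac{(|\vec m_{>0}|)!}{(L-|\vec m_{>0}|)!}\,.
\end{align*}
It therefore suffices to show that $\sum_{L\ge2}\Con^L S_{L,n}$ is finite and bounded by $\Con^n\,n^n\,n!$ (with $\Con$ allowed to grow, per our convention on generic constants); multiplying through by $n\cdot n!$ then produces~\eqref{ho-eq}.

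To estimate the combinatorial sum I would use two elementary facts. Since the positive entries of any $\vec m\in\Msp(L,n)$ are $\ge1$ and sum to $n$, we have $|\vec m_{>0}|\le\min\{L,n\}\le n$, so that $(|\vec m_{>0}|)!\le n!$, while $1/(L-|\vec m_{>0}|)!\le1$ always and $\le1/(L-n)!$ once $L\ge n$; and by the multinomial theorem $\sum_{\vec m\in\Msp(L,n)}\binom{n}{\vec m}=L^n$. Hence $S_{L,n}\le n!\,L^n$ for $2\le L\le n$ and $S_{L,n}\le n!\,L^n/(L-n)!$ for $L>n$. The first (finite) range contributes at most $n!\,n^n\sum_{L=2}^n\Con^L$, which is of the form $\Con^n\,n^n\,n!$ after enlarging $\Con$. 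For the tail I would substitute $L=n+k$, use $(n+k)^n\le2^n(n^n+k^n)$, and invoke $\sum_{k\ge0}\Con^k k^n/k!=e^{\Con}T_n(\Con)$ with $T_n$ the Touchard polynomial, together with the crude estimate $T_n(\Con)\le\max\{1,\Con\}^n\,\varpi_n\le\max\{1,\Con\}^n\,n!$ (where $\varpi_n\le n!$ is the $n$-th Bell number); this gives $\sum_{L>n}\Con^L S_{L,n}\le\Con^n\,n^n\,n!$ after relabeling, the decisive point being that $1/k!$ overwhelms the polynomial growth of $(n+k)^n$.

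Adding the two ranges gives $\sum_{L\ge2}\Con^L S_{L,n}\le\Con^n\,n^n\,n!$, whence $\sum_{L\ge2}|\calB_{p,L}(t)|\le n\cdot n!\cdot\Con^n\,n^n\,n!\,t^{1/2}\,e^{\frac{p^3t}{12}-\kappa_p t}=n\,(n!)^2\,(n\Con)^n\,t^{1/2}\,e^{\frac{p^3t}{12}-\kappa_p t}$, which is~\eqref{ho-eq}. The step I expect to be the main obstacle is the combinatorial one: although $\sum_{\vec m}\binom{n}{\vec m}=L^n$ grows and is multiplied by $\Con^L$, the weight $1/(L-|\vec m_{>0}|)!$ --- which records that in $\partial_s^n\tr(K_{s,t}^{\wedge L})$ the $n$ derivatives land on at most $n$ of the $L$ slots, with the remaining slots contributing bare traces --- decays fast enough in $L$ to make the series convergent; the Touchard/Bell estimate is the only mildly non-routine ingredient, the remainder being bookkeeping with factorials and generic constants.
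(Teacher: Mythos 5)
Your proposal is correct and follows essentially the same route as the paper: combine \eqref{e.calBL.id} with the $\vec{m}$-uniform bound of Proposition~\ref{p.error.bd.}, then control the residual combinatorial sum over $L$ via $\sum_{\vec m\in\Msp(L,n)}\binom{n}{\vec m}=L^n$, $(|\vec m_{>0}|)!\le n!$, and $(L-|\vec m_{>0}|)!\ge((L-n)_+)!$. The only cosmetic difference is your final estimate of $\sum_L \Con^L L^n/((L-n)_+)!$ by a Touchard-polynomial/Bell-number bound, where the paper instead uses the elementary splitting $L^n\le 2^n(2n)^n+2^n((L-2n)_+)^n$; both are routine and yield the same $(n\Con)^n$-type control.
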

\begin{proof}
Multiply both sides of~\eqref{eq-interchange} by $ s^{-\alpha} $, integrate the result over $ s\in[0,1] $,
and apply the bound~\eqref{lap-eq-def}. We get, for $ \Con=\Con(t_0,p_0) $,
\begin{align*}
	\text{l.h.s.\ of }\eqref{ho-eq}
	\leq
	(n+1)! \,t^{\frac12}\, e^{\frac{p^3t}{12}-\kappa_pt} 
	\sum_{L \geq 2}\sum_{\vec{m}\in\Msp(L,n)}\binom{n}{\vec{m}}\frac{({|\vec{m}_{>0}|})!\Con^L}{(L-{|\vec{m}_{>0}|})!}
\end{align*}
Within the last expression,
use $ |\vec{m}_{>0}| \leq n $ to bound $ \frac{({|\vec{m}_{>0}|})!}{(L-{|\vec{m}_{>0}|})!} \leq \frac{n!}{((L-n)_+)!} $,
and evaluate the sum $ \sum_{\vec{m}\in\Msp(L,n)}\binom{n}{\vec{m}} = L^n $.
This gives
\begin{align}
	\label{ho-eq.1}
	\text{l.h.s.\ of }\eqref{ho-eq}
	\leq
	n\cdot\,(n!)^2 \,t^{\frac12}\, e^{\frac{p^3t}{12}-\kappa_pt} 
	\sum_{L \geq 2}\frac{L^n\Con^L}{((L-n)_+)!}.
\end{align}
\revi{%
In the sum in \eqref{ho-eq.1}, bound $ L^n \leq (2n+(L-2n)_+)^n \leq 2^n(2n)^n + 2^n((L-2n)_+)^n $, use $ \frac{((L-2n)_+)^n}{((L-n)_+)!} \leq \frac{1}{((L-2n)_+)!} $, and evaluate the resulting series. The result shows that the sum in \eqref{ho-eq.1} is bounded by $ (n\Con)^n $. This completes the proof.}
\end{proof}

\section{Proof of Theorem~\ref{thm.main} and Theorem~\ref{thm.main.}} \label{sec.pfthm}
We begin with the proof of Theorem~\ref{thm.main.}.
Lemma~\ref{l.diff.term} \revi{justifies} the passing of derivatives in \eqref{e.derived}.
Recall the definition of $ \til\calA_p(t) $, $ \calB_{p,L}(t) $, $ L\geq 2 $, and $ \calB_{p,1}(t) $ from in~\eqref{e.calA}, \eqref{e.calBL}, and \eqref{e.calBo},
we have
$
	\Ex[(\calZ(2t,0)e^{\frac{t}{12}})^p]
	=
	\til\calA_{p}(t) + \sum_{L \geq 1} \calB_{p,L}(t).
$
Further, recall from \eqref{e.calA.decomp} that $ \til\calA_{p}(t)=\calA_p(t)-\hat\calA_p(t) $, so
\begin{align*}
	\Ex\big[(\calZ(2t,0)e^{\frac{t}{12}})^p\big]
	=
	\calA_{p}(t) - \hat\calA_{p}(t) + \sum_{L \geq 1} \calB_{p,L}(t).
\end{align*}
Given the bound~\eqref{e.calB0bd} and the bounds from Propositions~\ref{trace-estimate} and \ref{p.error.bd},
Theorem~\ref{thm.main.} now follows for $ \calB_{p}(t) := - \hat\calA_{p}(t) + \sum_{L \geq 1} \calB_{p,L}(t) $.

Next, Theorem~\ref{thm.main}\ref{thm.main.mom} follows immediately from Theorem~\ref{thm.main.}.
It now remains only to show Theorem~\ref{thm.main}\ref{thm.main.ldp}.
We will establish the large deviation upper and lower bound separately.
To simplify notation set $ V_t := \calH(2t,0)+\tfrac{t}{12} $.
Fix $ y>0 $. \revi{Markov's} inequality gives $	\Pr[ V_t \ge ty] \le e^{-p y}\Ex[e^{p V_t}] $.
Apply Theorem~\ref{thm.main}\ref{thm.main.mom}, take logarithm, and divide by $ t $. We obtain, for all $ p>0 $,
\begin{align} \label{upb1}
	\limsup_{t\to\infty} \tfrac1t\log \Pr\big[ V_t \ge ty \big] 
	\le 
	-p y + \tfrac{1}{12} p^3.
\end{align} 
Minimizing the right side of \eqref{upb1} over $p>0$, we obtain the desired large deviation upper bound
\begin{align*} 
	\limsup_{t\to\infty} \tfrac1t\log \Pr[V_t \ge ty\big] \le -\tfrac43y^{3/2}.
\end{align*} 

For lower bound we employ the standard change-of-measure argument and utilize the strict convexity of the function $ \frac{1}{12}p^3 $, $ p>0 $.
Fix $ \e>0 $, set $ q_*:=2(y+\e)^{1/2} $, and let $ \til{V}_{t} $ denote the random variable with the tilted law 
$ \Pr[\til{V}_t\in A] = \frac{1}{\Ex[ e^{q_*V_t} ]} \Ex[ e^{q_*V_t} \ind_\set{A}(V_t) ] $.
We write
\begin{align}
	\label{e.com}
	\Pr[V_t \geq ty ] = \Ex[ e^{-q_*\til{V}_t} \ind_\set{\til{V}_{t} \geq ty} ] \cdot \Ex[ e^{q_*V_t} ]
	\geq
	e^{-tq_*(y+2\e)} \Ex[ e^{q_*V_t} ] \, \Pr\big[ \til{V}_{t} \in [ty,t(y+2\e)] \big].
\end{align}
Our goal is to show that $ \lim_{t\to\infty}\Pr[ \til{V}_{t} \in [ty,t(y+2\e)] ]=1 $.
To this end, for $ \revi{\lambda\in(0,q_*) }$ bound the complement probability by \revi{Markov's} inequality as
\begin{align*}
	\Pr\big[\til{V}_t<ty\big] \leq e^{\lambda ty} \Ex[e^{-\lambda \til{V}_t}]
	&=
	e^{\lambda ty} \frac{ \Ex[e^{(q_*-\lambda)V_t}] }{ \Ex[e^{q_*V_t}] },
\\
	\Pr\big[\til{V}_t>t(y+2\e)\big] \leq e^{-\lambda t(y+2\e)} \Ex[e^{\lambda \til{V}_t}]
	&=
	e^{-\lambda t(y+2\e)} \frac{ \Ex[e^{(q_*+\lambda)V_t}] }{ \Ex[e^{q_*V_t}] }.
\end{align*}
Take log, divide the result by $ t $, and apply Theorem~\ref{thm.main}\ref{thm.main.mom}.
We obtain
\begin{align}
	\label{e.pto0}
	\limsup_{t\to\infty}
	\tfrac{1}{t}\log \Pr\big[\til{V}_t<ty\big] 
	&\leq
	y\lambda + \tfrac{1}{12}(q_*-\lambda)^3 - \tfrac{1}{12}q_*^3,
\\
	\label{e.pto1}
	\limsup_{t\to\infty}
	\tfrac{1}{t}\log \Pr\big[\til{V}_t>t(y+2\e)\big] 
	&\leq
	-(y+2\e)\lambda + \tfrac{1}{12}(q_*+\lambda)^3 - \tfrac{1}{12}q_*^3.
\end{align}
Now, view the r.h.s.\ of~\eqref{e.pto0} and \eqref{e.pto1} as functions of $\revi{ \lambda \in (-q_*,q_*) }$.
It is readily checked that these functions are strictly convex, zero at $ \lambda=0 $, and \revi{has} negative derivative at $ \lambda=0 $.
Hence there exists a small enough $ \lambda_*=\lambda_*(\e,y)>0 $ such that the r.h.s.\ of~\eqref{e.pto0} and \eqref{e.pto1} are negative for $ \lambda=\lambda_* $.
This gives  $ \lim_{t\to\infty}\Pr[ \til{V}_{t} \in [ty,t(y+2\e)] ]=1 $.
Use this in~\eqref{e.com}, take log, divide the result by $ t $, and apply Theorem~\ref{thm.main}\ref{thm.main.mom} to get
\begin{align*} 
	\liminf_{t\to\infty} \tfrac1t\log \Pr[V_t \ge ty\big] \ge -q_*(y+2\e) + \tfrac{1}{12}q_*^3 = - \tfrac{4}{3}(y+\e)^{3/2}-2\e(y+\e)^{1/2}.
\end{align*} 
Since $ \e>0 $ was arbitrary, sending $ \e\to 0 $ gives the desired large deviation lower bound.

\bibliographystyle{abbrv}		
\bibliography{frt}
\end{document}